\begin{document}

\newtheorem{theorem}{Theorem}[section]
\newtheorem{prop}[theorem]{Proposition}
\newtheorem{lemma}[theorem]{Lemma}
\newtheorem{cor}[theorem]{Corollary}
\newtheorem{definition}[theorem]{Definition}
\newtheorem{defn}[theorem]{Definition}
\newtheorem{conj}[theorem]{Conjecture}
\newtheorem{claim}[theorem]{Claim}
\newtheorem{defth}[theorem]{Definition-Theorem}
\newtheorem{obs}[theorem]{Observation}
\newtheorem{rmark}[theorem]{Remark}
\newtheorem{qn}[theorem]{Question}
\newtheorem{theo}[theorem]{Theorem}
\newtheorem{thmbis}{Theorem}
\newtheorem{dfn}[theorem]{Definition} 
\newtheorem{defi}[theorem]{Definition} 
\newtheorem{coro}[theorem]{Corollary}
\newtheorem{hypo}[theorem]{Hypothesis}
\newtheorem{corbis}{Corollary}
\newtheorem{propbis}{Proposition} 
\newtheorem*{prop*}{Proposition} 
\newtheorem{lem}[theorem]{Lemma} 
\newtheorem{lembis}{Lemma} 
\newtheorem{claimbis}{Claim} 
\newtheorem{fact}[theorem]{Fact} 
\newtheorem{factbis}{Fact} 
\newtheorem{qst}[theorem]{Question} 
\newtheorem{qstbis}{Question} 
\newtheorem{pb}[theorem]{Problem} 
\newtheorem{pbbis}{Problem} 
 \newtheorem{question}[theorem]{Question}
\newtheorem{rem}[theorem]{Remark}
\newtheorem{remark}[theorem]{Remark}
\newtheorem{example}[theorem]{Example}
\newtheorem{eg}[theorem]{Example}
\newtheorem{notation}[theorem]{Notation}
\newenvironment{preuve}[1][Preuve]{\begin{proof}[#1]}{\end{proof}}

\newcommand{\hhat}{\widehat}
\newcommand{\boundary}{\partial}
\newcommand{\C}{{\mathbb C}}
\newcommand{\integers}{{\mathbb Z}}
\newcommand{\natls}{{\mathbb N}}
\newcommand{\bbN}{{\mathbb N}}
\newcommand{\hypp}{{{\mathbb H}^2}}
\newcommand{\hyps}{{{\mathbb H}^3}}
\newcommand{\ratls}{{\mathbb Q}}
\newcommand{\reals}{{\mathbb R}}
\newcommand{\bbR}{{\mathbb R}}
\newcommand{\lhp}{{\mathbb L}}
\newcommand{\tube}{{\mathbb T}}
\newcommand{\cusp}{{\mathbb P}}
\newcommand\AAA{{\mathcal A}}
\newcommand\BB{{\mathcal B}}
\newcommand\CC{{\mathcal C}}
\newcommand\DD{{\mathcal D}}
\newcommand\EE{{\mathcal E}}
\newcommand\FF{{\mathcal F}}
\newcommand\GG{{\mathcal G}}
\newcommand\HH{{\mathcal H}}
\newcommand\II{{\mathcal I}}
\newcommand\JJ{{\mathcal J}}
\newcommand\KK{{\mathcal K}}
\newcommand\LL{{\mathcal L}}
\newcommand\MM{{\mathcal M}}
\newcommand\NN{{\mathcal N}}
\newcommand\OO{{\mathcal O}}
\newcommand\PP{{\mathcal P}}
\newcommand\QQ{{\mathcal Q}}
\newcommand\RR{{\mathcal R}}
\newcommand\SSS{{\mathcal S}}
\newcommand\TT{{\mathcal T}}
\newcommand\UU{{\mathcal U}}
\newcommand\VV{{\mathcal V}}
\newcommand\WW{{\mathcal W}}
\newcommand\XX{{\mathcal X}}
\newcommand\YY{{\mathcal Y}}
\newcommand\ZZ{{\mathcal Z}}
\newcommand\Ga{{\Gamma}}
\newcommand\pslc{{PSL_2({\C})}}
\newcommand\pslr{{PSL_2({\bbR})}}
\newcommand\CH{{\CC\Hyp}}
\newcommand\MF{{\MM\FF}}
\newcommand\PMF{{\PP\kern-2pt\MM\FF}}
\newcommand\ML{{\MM\LL}}
\newcommand\PML{{\PP\kern-2pt\MM\LL}}
\newcommand\GL{{\GG\LL}}
\newcommand\Pol{{\mathcal P}}
\newcommand\half{{\textstyle{\frac12}}}
\newcommand\Half{{\frac12}}
\newcommand\Mod{\operatorname{Mod}}
\newcommand\Area{\operatorname{Area}}
\newcommand\ep{\epsilon}
\newcommand\Hypat{\widehat}
\newcommand\Proj{{\mathbf P}}
\newcommand\U{{\mathbf U}}
 \newcommand\Hyp{{\mathbf H}}
 \newcommand\bH{{\mathbf H}}
\newcommand\D{{\mathbf D}}
\newcommand\Z{{\mathbb Z}}
\newcommand\R{{\mathbb R}}
\newcommand\Q{{\mathbb Q}}
\newcommand\E{{\mathbb E}}
\newcommand\til{\widetilde}
\newcommand\tK{\widetilde{K}}
\newcommand\btK{bdy(\widetilde{K})}
\newcommand\bK{bdy({K})}
\newcommand\tX{\widetilde{X}}
\newcommand\tP{\widetilde{P}}
\newcommand\tM{\widetilde{M}}
\newcommand\tS{\widetilde{S}}
\newcommand\length{\operatorname{length}}
\newcommand\tr{\operatorname{tr}}
\newcommand\gesim{\succ}
\newcommand\lesim{\prec}
\newcommand\simle{\lesim}
\newcommand\simge{\gesim}
\newcommand{\simmult}{\asymp}
\newcommand{\simadd}{\mathrel{\overset{\text{\tiny $+$}}{\sim}}}
\newcommand{\ssm}{\setminus}
\newcommand{\pair}[1]{\langle #1\rangle}
\newcommand{\T}{{\mathbf T}}
\newcommand{\inj}{\operatorname{inj}}
\newcommand{\collar}{\operatorname{\mathbf{collar}}}
\newcommand{\bcollar}{\operatorname{\overline{\mathbf{collar}}}}
\newcommand{\I}{{\mathbf I}}
\newcommand{\pis}{{\pi_1(S)}}

\newcommand{\bbar}{\overline}
\newcommand{\UML}{\operatorname{\UU\MM\LL}}
\newcommand{\EL}{\mathcal{EL}}
\newcommand\MT{{\mathbb T}}
\newcommand\Teich{{\mathcal T}}

\makeatletter
\@tfor\next:=abcdefghijklmnopqrstuvwxyzABCDEFGHIJKLMNOPQRSTUVWXYZ\do{%
  \def\command@factory#1{%
    \expandafter\def\csname cal#1\endcsname{\mathcal{#1}}
    \expandafter\def\csname frak#1\endcsname{\mathfrak{#1}}
    \expandafter\def\csname scr#1\endcsname{\mathscr{#1}}
    \expandafter\def\csname bb#1\endcsname{\mathbb{#1}}
    \expandafter\def\csname rm#1\endcsname{\mathrm{#1}}
  }
 \expandafter\command@factory\next
}
\makeatother

\newcommand*{\longhookrightarrow}{\ensuremath{\lhook\joinrel\relbar\joinrel\rightarrow}}
\newcommand {\tto}{ \to\rangle }
\newcommand {\onto} {\twoheadrightarrow}
\newcommand {\into} {\hookrightarrow}
\newcommand {\xra} {\xrightarrow}    
\newcommand{\ra}{\rightarrow}
\newcommand{\imp} {\Rightarrow}
\newcommand{\actedon}{\curvearrowleft} 
\newcommand{\actson}{\curvearrowright}

\newcommand {\sd} {\rtimes}   
\newcommand{\semidirect}{\ltimes}
\newcommand{\isemidirect}{\rtimes}
\newcommand{\tensor}{\otimes}
\newcommand{\wreath}{\Lbag}

\newcommand{\du}{\sqcup}
\newcommand{\Dunion}{\bigsqcup} 
\newcommand{\disjoint}{\sqcup}
\newcommand{\normal} {\vartriangleleft}

\newcommand {\ie}{ i.e.  }

\newcommand{\ul}[1]{\underline{#1}} 
\newcommand{\ol}[1]{\overline{#1}}


\newcommand{\Cay}{\operatorname{Cay}}
\newcommand{\proj}{\operatorname{proj}}
\newcommand{\Fix} {\operatorname{Fix}}
\newcommand{\dist}{\operatorname{dist}}
\newcommand{\diam}{\mathop{\mathrm{diam}\;}}

\newcommand{\yam}{$\bullet$}
\newcommand{\modif}{$ \clubsuit$}
\newcommand{\coucou}[1]{\footnote{#1}\marginpar{$\leftarrow$}}
\newcommand{\needref}{\textsuperscript{{\it citation needed}}\marginpar{$\leftarrow$}}
\newcommand{\why}{\textsuperscript{{\it Why ?}}\marginpar{$\leftarrow$}}

\title{Planes in degenerate 3-manifolds}

\author[Mahan Mj]{Mahan Mj}

\address{Tata Institute of Fundamental Research. 5, Homi Bhabha Road, Mumbai-400005, India}

\email{mahan@math.tifr.res.in}

\subjclass[2010]{30F40, 37A17, 37B20, 57M50}

\thanks{Research partially supported by  a DST J C Bose Fellowship. }   

\date{\today}

 \begin{abstract}
 We study totally geodesic planes in hyperbolic 3-manifolds $M$ having incompressible core and
degenerate ends.
We prove a Ratner-type phenomenon: a closed minimal
$\pslr-$invariant subset of $M$ is either an immersed totally geodesic
surface  or all of $M$. 

We also show that for an arbitrary  infinite volume hyperbolic 3-manifold $M$
without parabolics and with
 finitely generated fundamental group,  the number of compact totally geodesic surfaces in $M$ is finite.
\end{abstract}

\maketitle

\tableofcontents

\section{Introduction} 
The aim of this paper is to prove
the following Ratner-type phenomenon for degenerate hyperbolic 3-manifolds
(see Theorem \ref{main}):

\begin{theorem}\label{int}  Let $\Gamma$ be a degenerate Kleinian group without parabolics
and $M = \Hyp^3/\Gamma$ be the 
associated degenerate hyperbolic 3-manifold. Further, assume that the compact core $K$
 of $M$ is incompressible.
 Let $X$ be a minimal closed 
$H-$invariant subset, where $H=\pslr$. Then $X$ is either an immersed totally geodesic
surface or all of $M$.
\end{theorem}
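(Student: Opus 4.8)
\emph{Setup.} I would work in the frame bundle $FM=\Gamma\backslash G$ with $G=\pslc$ and $H=\pslr$ acting on the right, so that a geodesic plane in $M$ is the image of an $H$-orbit $xH\subset FM$ and $X$ is a minimal closed $H$-invariant subset of $FM$. Since $\Gamma$ is degenerate without parabolics, the Cannon--Thurston theory gives $\Lambda_\Gamma=S^2=\partial\Hyp^3$, so the renormalized frame bundle is all of $FM$; in particular a geodesic plane can never ``run out the convex core''. Then I would fix the geometric picture furnished by tameness (Thurston--Bonahon--Canary) and the Ending Lamination Theorem: $M$ has a compact core $K$ with incompressible boundary $\partial K=\bigsqcup_i S_i$, each end of $M\setminus K$ is a (simply or doubly) degenerate end $E_i$ carrying an ending lamination $\lambda_i$ and a Minsky-type bilipschitz model $S_i\times[0,\infty)$, and --- since there are no parabolics --- the thin part of $M$ is a disjoint union of Margulis tubes around short closed geodesics.

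\emph{The dichotomy.} I would follow the scheme of McMullen--Mohammadi--Oh. If some $H$-orbit $xH\subset X$ is already closed, then by minimality $X=xH$, which projects to a properly immersed totally geodesic surface; this is the first alternative. So assume no $H$-orbit in $X$ is closed. The goal is then to produce extra invariance: a one-parameter subgroup transverse to $H$ under which $X$ is invariant, and to upgrade it --- using that the only connected subgroups of $G$ strictly between $H$ and $G$ do not exist, since $\mathfrak{sl}_2(\reals)$ is a maximal real subalgebra of $\mathfrak{sl}_2(\C)$ --- to full $\pslc$-invariance, i.e. $X=FM$, which projects onto all of $M$. A short separate argument, as in MMO, rules out the remaining (disconnected) possibility that $X$ is invariant only under $H$ together with the order-two rotation about the common axis, since that too would force $xH$, hence $X$, to be closed.

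\emph{The two analytic inputs.} To run the production of transverse invariance one needs, first, \emph{recurrence}: a fixed compact set $\mathcal K_0\subset FM$ such that for every $x\in X$ the forward orbit under the horocyclic subgroup $U=\{u_t\}\subset H$ (and the $H$-orbit) returns to $\mathcal K_0$ with positive frequency. Granting this, I would take, for a generic $x\in X$, a sequence $xg_n\to x$ with $g_n\to e$, $g_n\notin H$, arising from the failure of $xH$ to be closed, and apply the polynomial-divergence (``unipotent blowup'') argument along $U$: the $U$-trajectories through $x$ and $xg_n$ diverge polynomially, and controlling the fastest-growing coordinate of the relative position at a recurrence time into $\mathcal K_0$ produces a nontrivial one-parameter subgroup $\{v_s\}\not\subset H$ with $xv_s\in X$ for all $s$. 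Then $X$ is invariant under $\langle H,\{v_s\}\rangle=G$, giving $X=FM$.

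\emph{Where the work is, and the main obstacle.} In the acylindrical (convex cocompact) case of MMO the recurrence input is free because the convex core is compact. Here the ``convex core'' is all of $M$, which is non-compact, so the \textbf{main obstacle} is precisely establishing the non-divergence/recurrence statement in the variable geometry of the degenerate ends. The plan for this is to show a $U$-orbit cannot linger arbitrarily deep in an end: using the product structure $E_i\cong S_i\times[0,\infty)$ of the Minsky model and the fact that along a geodesic segment penetrating depth $R$ into $E_i$ the ending lamination $\lambda_i$ together with incompressibility of $S_i$ forces a coarse lower bound (uniform because there are no parabolics) on how quickly the segment must turn back, one obtains a Dani--Margulis--type estimate: a $U$-trajectory spends a definite fraction of time in a fixed compact part of $M$ unless it is trapped in a single end forever, which $H$-invariance and minimality of $X$ exclude. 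A secondary subtlety inside this estimate is that short closed geodesics deep in an end can trap a trajectory for a long --- but quantitatively controlled --- time inside a Margulis tube, so the recurrence bound must be made robust against these tubes. Once recurrence is secured, the remaining steps are the now-standard homogeneous-dynamics arguments.
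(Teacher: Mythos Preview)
You have correctly located the heart of the problem --- recurrence of $U$-orbits --- and your endgame (unipotent blowup, transverse one-parameter subgroup, maximality of $H$ in $G$) is the same as the paper's. But the paragraph you label ``where the work is'' is not a proof; it is a statement of hope. A Dani--Margulis non-divergence theorem for $U$-orbits in a degenerate end is not in the literature, and your sketch (``the ending lamination together with incompressibility forces a coarse lower bound on how quickly the segment must turn back'') does not supply one. A $U$-orbit is a horocycle, not a geodesic segment, and the Minsky model gives you bilipschitz control of the Riemannian metric but no direct mechanism preventing a horocycle from drifting ever deeper into an end. Indeed, plenty of horocycles \emph{do} exit an end: the base-point of such a horocycle is a non-conical limit point, and these are dense in $S^2$. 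So you cannot hope for recurrence of \emph{every} $U$-orbit in $X$; you need to find \emph{some}.

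The paper's route is completely different and bypasses any general non-divergence statement. Instead of analyzing $U$-dynamics in the ends, it looks at the intersection $\widetilde X\cap\partial\widetilde K$, which (after discarding inessential loops, using incompressibility) yields a $\Gamma$-invariant family $\mathcal C$ of bi-infinite paths in $\partial\widetilde K$ --- an algebraic lamination. One first shows these are uniform quasigeodesics in $\widetilde K$ (a limiting argument using that a round circle pulls back under the Cannon--Thurston map to a Cantor set, not an arc). The crucial step is then that they are uniform quasigeodesics in $\widetilde M$: if not, the Cannon--Thurston lamination $\mathcal L_{CT}$ would meet $\mathcal C$, forcing an entire ending lamination $\mathcal L_E\subset\mathcal C$; but a geometric-limit argument (translating along a ray to an ideal vertex of a complementary polygon of $\mathcal L_E$) shows the limit of the planes carrying the sides of that polygon would have boundary a $k$-pronged suspension, which cannot be a union of round circles. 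Once leaves of $\mathcal C$ are quasigeodesics in $\widetilde M$, the bi-infinite geodesics tracking them land on conical limit points, and the horocycles in $X$ based at those points are \emph{not} properly embedded (this is a separate lemma: proper embedding of a horocycle forces its asymptotic geodesic ray to exit). These recurrent horocycles supply a $U$-minimal $Y\subset X$ that is not a single orbit, and from there the MMO machinery (your endgame) finishes. The moral: the recurrence you need is manufactured from the fine structure of the Cannon--Thurston map, not from quantitative dynamics in the model geometry.
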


Theorem \ref{int} has a long history going back to Hedlund's Theorem \cite{hedlund}. Work of
Margulis \cite{margulis-opp} and  Ratner's spectacular resolution of Raghunathan's conjecture 
\cite{ratner-top, ratner} gave a complete answer to the analogous problem for lattices in semi-simple Lie groups.
The special case in dimension three  \cite{shah} says that for a complete hyperbolic 3-manifold
of finite volume,
any closed 
$H-$invariant subset is either an immersed totally geodesic
surface  or all of the ambient manifold. Theorem \ref{int} is thus an exact analog of this Theorem
for degenerate hyperbolic 3-manifolds. 

The problem of classifying closed 
$K-$invariant subsets of a manifold $M$ whose geometry is modeled on a larger (semi-simple)
group $G$ makes sense
for any pair $(G,K)$ whenever $K \subset G$ and is of interest when $K$ is generated by
unipotents \cite{ratner-top}. However the problem 
has received considerably less attention for infinite volume manifolds.
 Recently,
McMullen, Mohammadi and Oh \cite{mmo} 
(resp. \cite{mmo-hor}) satisfactorily completed the study of closed 
$H-$invariant subsets of rigid acylindrical hyperbolic 3-manifolds when $H = \pslr$ 
(resp.  the group of real unipotent
matrices). They
 posed the problem for more general classes of hyperbolic 3-manifolds in 
 \cite{mmo-hor}. In this paper we devote ourselves to manifolds 
at the opposite end of the spectrum, viz. hyperbolic 3-manifolds all whose ends are degenerate. 
We note that this is the first departure from the convex cocompact
world.

In the final Section of the paper, we relax the assumption that $M$ has incompressible core.
 McMullen, Mohammadi and Oh \cite[Theorem B.1]{mmo} proved that for a convex cocompact Kleinian group $\Ga$, there
can be only finitely many compact
totally geodesic surfaces in $M = \hyps/\Ga$. In this paper, we relax the hypothesis of convex cocompactness
and  prove the following:
\begin{theorem} \label{int2} (see Theorem \ref{main2})
Let $\Ga$ be a finitely generated Kleinian group without parabolics and let
$M = \hyps/\Gamma$. If $M$ has infinite volume, then there can exist only finitely many compact
totally geodesic surfaces in $M$. \end{theorem}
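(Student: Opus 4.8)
The plan is to combine the topological structure of $M$ with the rigidity of Theorem~\ref{int} and a limit-set obstruction that renders degenerate ends invisible to compact totally geodesic surfaces. By the Tameness Theorem and Canary's Covering Theorem, $M$ is the interior of a compact $3$-manifold, has a compact core $K$, and each of its finitely many ends is either convex cocompact (there being no parabolics) or degenerate; the boundary surface $\Sigma_E$ of a degenerate end $E$ is incompressible, so $\pi_1 E=\pi_1\Sigma_E$ is a closed surface group, and it is geometrically infinite, since a copy of $E$ embeds in the cover $\hyps/\pi_1 E$. If every end is convex cocompact then $M$ is convex cocompact and the statement is \cite[Theorem~B.1]{mmo}, so I assume $M$ has a degenerate end. (If the core is compressible I would first reduce to the incompressible case using the characteristic compression body and the incompressibility of totally geodesic surfaces; this is routine, but needs care since Theorem~\ref{int} presupposes an incompressible core.) I work with the dictionary: the image in $M$ of a compact totally geodesic surface is a set $\Delta\backslash\widehat P$ with $\widehat P\subset\hyps$ a totally geodesic plane, $C=\boundary\widehat P$ a round circle lying in $\Lambda(\Ga)$, and $\Delta=\mathrm{Stab}_\Ga(C)$ acting cocompactly on $\widehat P$; two such surfaces are distinct precisely when the circles $C$ lie in different $\Ga$-orbits. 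Enlarge $K$ so that $K\supseteq\CC(M)\cap E$ for every convex cocompact end $E$.

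\textbf{Step 1 (confinement).} The first goal is to show that every compact totally geodesic surface $S\subset M$ is contained in a fixed compact set $Z=\bbar{\NN_R(K)}$. A weak first step: $S$ must meet $K$. Indeed, if $S\cap K=\emptyset$ then $S$, being connected, lies in a single end; it cannot lie in a convex cocompact end, where $M\ssm K$ is disjoint from $\CC(M)\supseteq S$; so it lies in a degenerate end $E$, and then $\Delta$ is a subgroup of the closed surface group $\pi_1 E$. But a subgroup of a closed surface group that is again a closed surface group has finite index, so $\Lambda(\Delta)=\Lambda(\pi_1 E)$, which is absurd: $\Lambda(\Delta)$ is a round circle while $\Lambda(\pi_1 E)$ is not, $\pi_1 E$ being geometrically infinite. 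The full confinement statement requires the quantitative refinement that a compact totally geodesic surface penetrates any given degenerate end only to a depth bounded independently of $S$. This I would deduce from the product structure of a tame degenerate end together with the area-minimality of totally geodesic surfaces (a least-area surface spanning to depth $d$ has area $\gtrsim d$) and the fact that the ending lamination fills~--- so that a finite-complexity incompressible subsurface cannot be pushed arbitrarily deep into the end.

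\textbf{Step 2 (finiteness in the compact region).} Granting Step~1, suppose for contradiction that there are infinitely many distinct compact totally geodesic surfaces $S_n\subset Z$, with circles $C_n\subset\Lambda(\Ga)$. Since $Z$ is compact and $M$ has no parabolics, $\bbar Z$ carries no closed geodesic shorter than some $\ep_Z>0$ (only finitely many disjoint Margulis tubes fit in a compact set), so each $S_n$ is $\ep_Z$-thick and the groups $\Delta_n$ lie in a Chabauty-precompact family~--- crucially, no a priori bound on the genera of the $S_n$ is needed. Set $Y=\bbar{\bigcup_n x_{S_n}H}\subset\Ga\backslash\pslc$, a closed $H$-invariant set lying over the compact set $\bbar Z$; in particular no $H$-invariant subset of $Y$ is all of $M$, so any minimal closed $H$-invariant $Y_0\subseteq Y$ is, by Theorem~\ref{int}, an immersed totally geodesic surface, and, lying over a compact set, a \emph{compact} one, with cocompact Fuchsian group $\Ga_0$ and round circle $C_0$. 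The orbits $x_{S_n}H$ accumulate on $Y_0$; applying the argument of \cite[Theorem~B.1]{mmo} in the compact region $Z$~--- with Theorem~\ref{int} in the role of the acylindrical rigidity theorem~--- the discreteness of $\Ga$ forces suitable $\Ga$-translates of the $C_n$ to converge to, and then to coincide with, $C_0$ for all large $n$. Thus infinitely many $C_n$ lie in a single $\Ga$-orbit, contradicting the distinctness of the $S_n$. (This retroactively shows the genera are bounded, consistent with Step~1.)

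\textbf{Main obstacle.} I expect the essential difficulty to lie in the depth bound of Step~1: ruling out a compact totally geodesic surface that meets $K$ yet sends a thin tentacle~--- necessarily a finite-complexity incompressible subsurface, hence forcing the genus to be large~--- arbitrarily far into a degenerate end. Making this bound quantitative and uniform over all compact totally geodesic surfaces, via the geometry of tame degenerate ends, is the crux; the compressible-core reduction and the localisation of the \cite{mmo} counting argument to $Z$ are, by comparison, routine. The genuinely new ingredient relative to \cite{mmo} is the limit-set observation of Step~1: it is what makes the degenerate ends of $M$ contribute no compact totally geodesic surfaces, reducing the count to the convex-cocompact-like core region where the methods of \cite{mmo} and Theorem~\ref{int} apply.
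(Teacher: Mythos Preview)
Your strategy is genuinely different from the paper's, and the gap lies exactly where you locate it. Your Step~1 depth bound is not established by the arguments you sketch. The area estimate ``a least-area surface spanning to depth $d$ has area $\gtrsim d$'' together with Gauss--Bonnet only bounds depth in terms of $|\chi(S\cap E)|$; since nothing bounds the genus of $S$ (or of $S\cap E$) a priori, this yields no uniform depth bound over all compact totally geodesic $S$. The assertion that ``a finite-complexity incompressible subsurface cannot be pushed arbitrarily deep'' is likewise unsupported: the boundary curves of $S\cap E$ sit on $\Sigma_E$ and hence are few in number, but the genus of $S\cap E$ is uncontrolled. Your Step~2 is also circuitous and mis-aimed: Theorem~\ref{int} applies only to \emph{degenerate} $M$ with incompressible core, neither of which is hypothesised in Theorem~\ref{main2}; and in any case, once confinement to a fixed compact $Z$ is established, Theorem~\ref{bddplane} alone gives finiteness immediately (since $M$ is noncompact), so the entire Step~2 apparatus is unnecessary.

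The paper takes the opposite tack. Rather than proving confinement, it \emph{assumes} (via Theorem~\ref{bddplane}, which is your Step~2 tool) that the $S_i$ penetrate arbitrarily deep into some degenerate end $E$, and derives a contradiction from the Cannon--Thurston map. For each $i$ one passes to a suitable cover, takes a deepest point $q_i$ of $S_i$ in $E$ and a minimising geodesic $[p_i,q_i]$ from $\partial E$; limits of $[p_i,q_i]$ are almost-minimising rays, and concatenating $[p_i,q_i]$ with geodesic rays in the totally geodesic $S_i$ emanating from $q_i$ gives uniform quasigeodesics in $\tM$. Path-homotoping these back to rays in $\tS$ and tracking landing points, one finds that a full-measure, co-meagre subset of the Gromov boundary $\partial\Delta$ (where $\Delta=\pi_1(\partial E)$) is sent by the Cannon--Thurston map into the multiple limit set $\Lambda_m$, i.e., consists of ideal endpoints of leaves of the ending lamination $\LL_E$. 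But such endpoints form a measure-zero, nowhere-dense subset of $\partial\Delta$ --- contradiction. This argument uses neither incompressibility of the core nor Theorem~\ref{int}; its key input is the identification $\Lambda_h(\Ga)^c=\Lambda_m$ of Proposition~\ref{horo-mult}, which is precisely what converts ``deep penetration'' into a statement about ending laminations.
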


It is curious that, in the present paper,
 we do not need the assumption of acylindricity that is essential in \cite{mmo}.
 Degeneracy of the ends seems to force
some kind of mixing for a totally geodesic plane.
As pointed out in \cite{mmo},  the
Ratner-type phenomenon discussed there and in the present paper fails for  certain cylindrical
convex cocompact manifolds, e.g. 
 for certain quasi-Fuchsian surface groups that are "nearly Fuchsian". 
Theorem \ref{int} in the special case of a doubly degenerate surface group shows that 
the Ratner-type phenomenon can be re-instated provided we deform the quasi-Fuchsian surface groups 
to a degenerate limit.
Pitching these two facts against each other seems to suggest an intriguing
possibility of a "phase-transition" in quasi-Fuchsian space, where surface groups that are "less degenerate"
do not exhibit Ratner-type phenomena, while surface groups that are "more degenerate"
do.

\subsection{Notation and Scheme} 
\begin{defn}\label{degdef}
We say that a hyperbolic 3-manifold $M$ is {\bf degenerate}, if all the ends of $M$ are degenerate.
The corresponding Kleinian group $\Ga$ will  be called a {\bf degenerate Kleinian group}.
\end{defn}

Note that a finitely generated Kleinian group $\Ga$ is degenerate if and only if its limit set 
$\Lambda (=\Lambda_\Ga)$ is all of $S^2$
or, equivalently, if the convex hull of $\Lambda$ is all of $\Hyp^3$.
Note also that if $\Ga$ is abstractly isomorphic to the fundamental group of a closed surface $S$ of genus greater than one,
then $\Ga$ is degenerate in the sense of Definition \ref{degdef} if and only if $\Ga$ is {\bf doubly} or
{\bf totally} degenerate in the sense of \cite{thurstonnotes}.  

We fix, once and for all,
 the following notation for this paper:
\begin{enumerate}
\item $G = \pslc$
\item $H = \pslr$
\item $A$ will denote the diagonal matrices in $G$.
\item $N$ will denote the upper triangular unipotent matrices.
\item $U$ will denote the {\it real} upper triangular unipotent matrices.
\item $V$ will denote the  upper triangular unipotent matrices with purely imaginary off-diagonal entry.
\item Throughout most of the paper,
$\Ga$ will denote a degenerate  Kleinian group, except in the last Section, where it will denote an arbitrary
finitely generated Kleinian group.
\item $M = \hyps / \Ga$ 
\item $K$ is a compact core of $M$.
\item $S^2$ will denote the boundary sphere of $\hyps$
\item $\SSS = \pslc / \pslr $ denotes the space of circles on $S^2$, equipped with the quotient topology.
\item $\UU = \pslc / U$ denotes the space of horocycles in $\hyps$, equipped with the quotient topology.
\item $X$ will be a minimal closed $H-$invariant subset of $M$.
\item $Y$ will be a minimal closed $U-$invariant subset of $M$.
\item $B^3 = \Hyp \cup S^2$ is the usual compactification of $\hyps$.
\end{enumerate}

\bigskip

 We are interested in studying closed $H-$invariant subsets of $M$. This is equivalent to 
studying closed $\Ga-$invariant subsets of $\SSS$ (see \cite{mmo} for more on this correspondence). We elaborate 
here on the topology on $\SSS$. Equip the collection of 
closed subsets of $S^2$ with the Hausdorff topology and let 
$C_c(S^2)$ denote the subspace consisting of subsets that have cardinality greater than one, i.e. we exclude singletons. Then $\SSS$ is contained in $C_c(S^2)$ and the topology coincides with the subspace topology.
Theorem \ref{int} then asserts  that
a minimal closed $\Ga-$invariant subset of $\SSS$ with the subspace topology inherited from 
$C_c(S^2)$  is either discrete or all of $\SSS$.

A recurring theme (cf. \cite{margulis-opp})
in the study of closed $H-$invariant sets $X$ is that it leads us naturally
 to the study of $U-$invariant
sets $Y$, where $U$ is the set of real unipotent matrices. $U-$orbits coincide with horocycles.
A word about the topology on $\UU$. Adjoining the basepoint of a horocycle to the horocycle we obtain a 
circle in $B^3$.  Equip the collection of 
closed subsets of $B^3$ with the Hausdorff topology and let 
$C_c(B^3)$ denote the subspace consisting of subsets that have cardinality greater than one, i.e. we exclude singletons. Then $\UU$ is contained in $C_c(B^3)$ and the topology coincides with the subspace topology.

We are now in a position to discuss the scheme of the paper. Section \ref{prel} recalls the two main ingredients
that feed into the paper:
\begin{enumerate}
\item The existence and
structure of Cannon-Thurston maps \cite{mahan-split, mahan-elct, mahan-red, mahan-kl} for arbitrary
finitely generated Kleinian groups.
\item Some general structure theorems from \cite{mmo}.
\end{enumerate}

Section \ref{hmin} establishes the existence of minimal closed $H-$invariant sets $X$. Our strategy
for the next three sections
is to construct within $X$, sufficiently complicated minimal closed $U-$invariant sets $Y$. Here `sufficiently complicated' simply means that $Y$ contains more than one horocycle. The construction
of $Y$ is indirect. Section \ref{horgeo} establishes a general correspondence between geodesic rays that
recur to a compact subset of $M$ and recurrent horocycles. This reduces the problem of constructing $Y$
to the construction of recurrent geodesic rays. Section \ref{lam} then  shows that (roughly speaking) the
intersection of $X$ with a suitably chosen compact core $K$ of $M$ furnishes an algebraic lamination
(traditionally the support of a geodesic current) $\CC$. Using the structure theory of Cannon-Thurston maps,
it then suffices to show that $\CC$ contains no ending lamination (cf.
 \cite{thurstonnotes, minsky-elc1, minsky-elc2}).  This, last, is accomplished by a geometric limit argument.

With the existence of closed $U-$minimal sets $Y$ in place, the proof of Theorem \ref{int}, carried out
in Section \ref{maint}, is largely similar to
 the corresponding argument in \cite{mmo}.
In Section \ref{fin}, we extend a Theorem of McMullen, Mohammadi and Oh \cite[Theorem B.1]{mmo}: we show
that in any infinite volume hyperbolic 3-manifold with finitely generated fundamental group, there can be only finitely many compact totally geodesic immersed surfaces.

 A final word about the difference in techniques between \cite{mmo} and the present paper. The former uses thick sets, the renormalized frame flow and mixing properties of the frame flow in an essential way,
as the focus there is on `reducing' the problem (in a sense) to the convex core, which in turn
has properties resembling a compact manifold. Th convex core
 is, in fact, a compact manifold with boundary. Our focus
in this paper, is topological-dynamic in flavor and a detailed  analysis of the $\Ga-$action on its
limit set (as explicated by the Cannon-Thurston map) is the main ingredient. By their very
nature, the examples dealt with in this paper permit no compact "reduction".
 In this sense, the examples
of hyperbolic 3-manifolds explored here are the first genuinely infinite volume examples
for which  a Ratner-type phenomenon is obtained.

\section{Preliminaries}\label{prel}

\subsection{Cannon-Thurston maps}

\begin{defn}   Let $W$ and $Z$ be hyperbolic metric spaces and
$i : Z \rightarrow W$ be an embedding. Suppose that a continuous extension $\hat{i}:
\widehat{Z} \to \widehat{W} $ of $i$ exists between their (Gromov) compactifications.
Then the boundary value of $\hat i$, namely
 $\partial i : \partial Z \to \partial W$ is called
a {\bf Cannon-Thurston map}.
\end{defn}

For us,
$Z$ will be a Cayley graph of the Kleinian group $\Ga$ with respect to a finite generating set
 and $W$ will be $\Hyp^3$, where we identify (the
vertex set of) $Z$ with an orbit of $\Ga$ in $\Hyp^3$. Equivalently
(as is often done in geometric group theory), we may choose a compact core $K$
of $M$ and $W, Z$ will be identified with $\til M$ and $\tK$ respectively.
Then the main Theorems of \cite{mahan-split, mahan-red, mahan-elct, mahan-kl} give us:

\begin{theorem} Let $M$ be a degenerate hyperbolic 3-manifold without parabolics and $K$
a compact core. Let $\Ga$ be the corresponding Kleinian group.
 A Cannon-Thurston map $\partial i$ exists for $i: \Ga \to \til{M}$
(or equivalently, $i: \tK \to \tM$). Further,
 $\partial i$ identifies $a, b \in \partial \Ga$ 
 iff $a, b$ are  end-points of a leaf of an ending lamination
$\LL_E$ or boundary points of an ideal polygon whose sides are leaves
of $\LL_E$, where $E$ is an end  of $M$. \label{ctstr}
\end{theorem}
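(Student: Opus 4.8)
The plan is to treat separately the existence of $\partial i$ and the description of the identifications it makes, and to indicate how the general finitely generated degenerate case reduces to the surface-group case. For \emph{existence}, I would verify the standard criterion for a Cannon--Thurston map: $\partial i$ exists provided that for every $R>0$ there is an $R'>0$ such that any geodesic segment of $\widetilde{K}$ lying outside the $R'$-ball about a fixed basepoint $\tilde p\in\widetilde{M}$ is carried by $i$ to a path lying outside the $R$-ball about $\tilde p$ in $\widetilde{M}$. To check this I would use the model geometry of the degenerate ends of $M$ — Minsky's bilipschitz model for surface groups, and the split-geometry model of \cite{mahan-split, mahan-kl} in general — to build, for each geodesic $\gamma\subset\widetilde{K}$, a \emph{ladder} $\BB_\gamma\subset\widetilde{M}$: a union of uniformly quasiconvex sheets strung along $\gamma$, together with a coarse Lipschitz retraction $\widetilde{M}\to\BB_\gamma$. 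A coarse Lipschitz retract of a hyperbolic space is quasiconvex, so $\gamma$ tracks a quasigeodesic of $\widetilde{M}$ in a suitable electric metric; comparing the electric and hyperbolic metrics on the thick part then yields the required properness, hence the continuous boundary extension $\partial i\colon\partial\Gamma\to S^2$. By construction $\partial i$ is $\Gamma$-equivariant and its image is $\Lambda_\Gamma=S^2$, the last equality because $\Gamma$ is degenerate.

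\emph{Leaves and complementary polygons are identified.} Next I would show that $\partial i(a)=\partial i(b)$ whenever $a,b\in\partial\Gamma$ are the endpoints of a leaf of some ending lamination $\LL_E$, or the ideal vertices of a complementary ideal polygon of $\LL_E$. Fix an end $E$ with cross-section $S_E\subset\partial K$ and a leaf $\ell$ of $\LL_E$; it lifts to a bi-infinite geodesic $\tilde\ell\subset\widetilde{S}_E$ with distinct endpoints $a,b$, and by continuity of $\hat i$ the $i$-images in $\widetilde{M}$ of the two ends of $\tilde\ell$ have ideal endpoints $\partial i(a)$ and $\partial i(b)$. Since $\LL_E$ is the ending lamination of $E$ it is unrealizable (Bonahon, Canary), and in the model geometry a leaf of $\LL_E$ sits in ever-deeper, ever-thinner split components down the lift of $E$; this forces the two $i$-images above to fellow-travel in $\widetilde{M}$, hence to be asymptotic, so $\partial i(a)=\partial i(b)$. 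The ideal-polygon case is the same argument applied simultaneously to the finitely many leaves bounding a complementary region, which are pinched together as a block.

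\emph{No other identifications.} The converse — if $a\neq b$ in $\partial\Gamma$ and $(a,b)$ is spanned by neither a leaf nor a complementary ideal polygon of any $\LL_E$, then $\partial i(a)\neq\partial i(b)$ — is the technical core of the theorem, and I expect it to be the main obstacle. The strategy is to show that the bi-infinite geodesic $[a,b]$ in $\widetilde{K}$ is \emph{electro-ambiently quasiconvex}: after coning off the lifts of the thin parts and split components prescribed by the model, $[a,b]$ stays uniformly quasiconvex in $\widetilde{M}$ and is not absorbed into the thin region of a single end. One then organizes the argument according to the pattern in which $[a,b]$ travels among the ends and the core, producing in each case a definite $\widetilde{M}$-separation between neighborhoods of its two ideal endpoints that survives to the boundary. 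This amounts to identifying the ``Cannon--Thurston lamination'' $\ker(\partial i)$ with the union of the $\LL_E$ together with their polygonal hulls; it is carried out for surface groups in \cite{mahan-elct} and, by passing to the covers corresponding to the subgroups $\pi_1 S_E$ and analyzing the $\Gamma$-action on $\Lambda_\Gamma$, extended to arbitrary finitely generated degenerate $\Gamma$ in \cite{mahan-red, mahan-kl}. Assembling these facts, together with the $\Gamma$-equivariance of the whole picture, gives the stated description of $\partial i$.
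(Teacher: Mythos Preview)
The paper does not prove this theorem at all: the sentence immediately preceding the statement reads ``Then the main Theorems of \cite{mahan-split, mahan-red, mahan-elct, mahan-kl} give us:'', so Theorem~\ref{ctstr} is quoted from the author's earlier work and used as a black box throughout the paper. There is therefore no proof in the paper to compare your proposal against.

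That said, your sketch is a faithful outline of the strategy actually used in those cited papers: the ladder-and-retraction construction for existence (from \cite{mahan-split}), the unrealizability argument for showing leaves and polygon diagonals of ending laminations are collapsed, and the identification of the Cannon--Thurston lamination with the union of the $\LL_E$ via electro-ambient quasiconvexity for the converse (from \cite{mahan-elct, mahan-red, mahan-kl}). You have also correctly flagged the reduction from the general finitely generated case to the surface-group case via peripheral covers as the place where most of the technical work lies. So nothing in your proposal is wrong as an account of how the theorem is proved in the literature; it is simply that in the present paper no proof is expected or supplied, and the result is invoked as a prerequisite.
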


In Section \ref{horgeo}, we shall be concerned with the conical limit set of a Kleinian group.
We furnish some definitions here and explicate the relationship with Cannon-Thurston maps.

\begin{defn} A point $z \in \Lambda (=\Lambda_\Gamma)$ is called a conical limit point if 
for any base-point $o$, there
exist 
\begin{enumerate}
\item a geodesic ray $\gamma$ landing at $z$
\item $R > 0$
\end{enumerate}
such that the neighborhood $N_R(\gamma)$ contains infinitely many distinct orbit points $g.o$, $g \in \Ga$.

The collection $\Lambda_c (=\Lambda_c(\Gamma))$  of conical limit points is called the conical limit set
of $\Ga$.
\end{defn}

\begin{remark} {\rm
The Cannon-Thurston map in Theorem \ref{ctstr}
furnishes a method of pulling back any subset of $S^2$ to $\partial \tK$. For any totally geodesic plane $\tP \subset \hyps$,
$(\partial i)^{-1} (\partial \tP) $ is a closed subset of $\partial \tK$, the Gromov boundary of $\tK$.
We denote $(\partial i)^{-1} (\partial \tP) $  by $\tP_K$ for short. Again, 
$(\partial i)^{-1} (\Lambda_c) $ is a  subset of $\partial \tK$, which we denote as $\Lambda_{c,K}$.
Section \ref{lam} can be viewed as an investigation into the properties of $\tP_K \cap \Lambda_{c,K}$; in particular we establish
that this intersection is non-empty.

A heuristic reason to believe that the intersection is non-empty for doubly degenerate surface Kleinian groups is the following: \\
A doubly degenerate $M$ has two ends, the $+\infty-$end and the $-\infty-$end say. Geodesics in $M$ that do not correspond to 
points in $\Lambda_c$ necessarily have to exit $M$ through either the $+\infty-$end or the $-\infty-$end.
It is therefore natural to expect that $\partial \tP$ cannot be built up entirely of these two types of points and anything
"in between" necessarily give conical limit points.} \label{pullback}
\end{remark}

\medskip

\begin{lemma} Let $P$ be a totally geodesic immersed plane in $M$ and $\tP$ be a lift to $\tM$.
Let $E$ be an incompressible end of $M$ and
 $S = \partial E$ be its boundary surface. Let $\partial{i} : \partial {\til{S}} \to \partial {\til{M}}$
denote the Cannon-Thurston map for $S \hookrightarrow M$. Then $\tP_S := (\partial{i})^{-1} (\partial \tP) $ is 
homeomorphic to a Cantor set contained in the circle $\partial \til{S}$. \label{cantor} \end{lemma}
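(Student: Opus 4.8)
The plan is to use the structure of the Cannon--Thurston map from Theorem \ref{ctstr} together with the fact that $\partial\tP$ is a round circle in $S^2$, and to argue that the preimage $\tP_S$ is closed, perfect, and totally disconnected inside $\partial\til S$, hence a Cantor set. First I would record that $\tP_S$ is closed and nonempty: $\partial i$ is continuous, $\partial\tP$ is a closed (nonempty) subset of $S^2 = \partial\tM$, so $\tP_S = (\partial i)^{-1}(\partial\tP)$ is closed in $\partial\til S \cong S^1$; nonemptiness follows because $\partial i$ is surjective onto $\Lambda_\Ga$ (here $S$ is an end surface of the degenerate manifold, so $\partial i$ for $S\hookrightarrow M$ has full image in the relevant sense) and $\partial\tP\subset S^2$ meets $\Lambda_\Ga$. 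A closed nonempty subset of $S^1$ that is perfect and has empty interior is a Cantor set, so it remains to verify those two properties.

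Next I would show $\tP_S$ has empty interior in $\partial\til S\cong S^1$. Suppose it contained an arc $I$. Since $S$ is incompressible, the ending lamination $\LL_E$ is a genuine (filling) lamination on $S$, and by Theorem \ref{ctstr} the only identifications $\partial i$ makes are collapsing endpoints of leaves of $\LL_E$ (or of ideal polygons bounded by such leaves). Away from this identification, $\partial i$ is injective; more precisely, $\partial i$ restricted to the complement of the (countably many) endpoint pairs is a continuous injection of a full-measure $G_\delta$ into $S^2$. If an entire arc $I$ maps into the round circle $\partial\tP$, then, modulo the countably many collapsed leaves meeting $I$, $\partial i(I)$ would be a nondegenerate sub-arc of $\partial\tP$ — but then $\partial\tP$ would contain a nontrivially parametrized piece of $\partial\til S$ which, pulled back through the dynamics, would force $\partial\tP$ to be $\Ga$-invariant up to the lamination, contradicting that a round circle's orbit closure is either discrete or everything (and a single $\partial\tP$ is certainly not $\Ga$-invariant unless $P$ is a closed geodesic surface, an excluded degenerate case here since $M$ is degenerate without such compact pieces forcing $S^2$). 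Cleanly: an arc in $\tP_S$ would make $\partial\tP\cap\Lambda_\Ga$ contain an interval's worth of non-collapsed directions, and since $\Ga$ acts minimally on $\Lambda_\Ga = S^2$, translating $\partial\tP$ around would cover $S^2$ by countably many circles, impossible. So $\tP_S$ has empty interior.

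Finally I would show $\tP_S$ is perfect, i.e. has no isolated points. Let $x\in\tP_S$, so $\partial i(x)\in\partial\tP$. Because $\Lambda_\Ga = S^2$ and $\Ga$ acts with conical limit points dense, the circle $\partial\tP$ contains infinitely many distinct images $\partial i(x_n)$; the issue is to get the $x_n$ accumulating at $x$ itself. I would use that $\partial\til S$ has no isolated points and that the non-injectivity locus of $\partial i$ is only the countable set of leaf-endpoints of $\LL_E$: take $y_n\to x$ in $\partial\til S$; then $\partial i(y_n)\to\partial i(x)\in\partial\tP$, but $\partial i(y_n)$ need not lie on $\partial\tP$. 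To correct this, approximate within $\tP_S$ using that $\partial\tP$ is a round circle through $\partial i(x)$ together with the connectedness/branching structure of the Cannon--Thurston fibers: a complementary interval of $\tP_S$ in $S^1$ has two endpoints, both in $\tP_S$, so either $\tP_S$ is finite (excluded, since then $\partial\tP\cap\Lambda_\Ga$ would be finite, contradicting minimality of the $\Ga$-action on $S^2$ forcing $\partial\tP$ to meet $\Lambda_\Ga$ in a perfect set) or every point is a two-sided limit of endpoints of complementary gaps, hence not isolated. I expect the main obstacle to be precisely this last point — ruling out isolated points cleanly — since it requires controlling how the round circle $\partial\tP$ sits relative to the Cannon--Thurston fibers; the natural route is to combine Theorem \ref{ctstr} (the identifications are exactly leaf/polygon endpoints of ending laminations) with the observation that $\partial\tP\cap\Lambda_\Ga$ is a closed perfect subset of the circle $\partial\tP$ (because $\Lambda_\Ga = S^2$ and totally geodesic planes in infinite-covolume manifolds meet the limit set in no isolated points), and then pull this perfectness back through $\partial i$, noting the fiber over each non-leaf-endpoint is a single point so perfectness is preserved off a countable set, which suffices.
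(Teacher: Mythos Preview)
Your overall strategy --- show $\tP_S$ is closed, perfect, and has empty interior in $\partial\til{S}\cong S^1$ --- is exactly the paper's. But your execution of the two nontrivial steps diverges from the paper and contains a genuine gap.

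For perfectness, the paper's argument is a single line: $S^1$ is perfect and $\partial i$ is finite-to-one (by Theorem~\ref{ctstr} each fiber is at most the vertex set of one ideal complementary polygon of $\LL_E$), so the preimage $\tP_S$ of the closed set $\partial\tP$ is perfect. You eventually arrive near this (``the fiber over each non-leaf-endpoint is a single point so perfectness is preserved off a countable set''), but only after a detour through complementary intervals, $\partial\tP\cap\Lambda_\Ga$, and minimality of the $\Ga$-action, none of which is needed. The step you flag as the main obstacle is in fact the easy one. Incidentally, your claim that the non-injectivity locus of $\partial i$ is countable is false: the set of leaf-endpoints of a filling lamination is uncountable; what is finite is each individual fiber.

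The more serious problem is your argument for empty interior. You correctly observe that if an arc $I$ lay in $\tP_S$ then $\partial i(I)$ would sit inside the round circle $\partial\tP$. But your proposed contradictions --- that $\partial\tP$ would be forced to be $\Ga$-invariant, or that $\Ga$-translates of $\partial\tP$ would cover $S^2$ by countably many circles --- do not follow from $I\subset\tP_S$. Equivariance of $\partial i$ gives only that $g\cdot I\subset(\partial i)^{-1}(g\cdot\partial\tP)$ for $g\in\pi_1(S)$, which yields density of a countable union of circles in $S^2$, not a covering; and density of such a union is no contradiction. The paper's argument is entirely local and invokes no dynamics on $\SSS$: by Theorem~\ref{ctstr}, on any interval $I\subset\partial\til{S}$ the map $\partial i$ makes identifications coming from endpoint-pairs of leaves of the filling ending lamination $\LL_E$, and these identifications prevent $\partial i|_I$ from mapping into a subarc of a round circle. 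That is the whole content of the empty-interior step.
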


\begin{proof} First, since $\partial i$ is continuous $\tP_S$ is closed. Further, since $S^1$ is perfect and $\partial i$ is finite-to-one,
$\tP_S$ is perfect. Also, for any interval $I \subset \partial \tS$, $\partial {i}$ identifies pairs of points corresponding 
precisely to end-points of 
leaves of $\LL_E$, the ending lamination corresponding to $E$ (by Theorem \ref{ctstr}. In particular, $I$ cannot map to a subarc of $\partial \tP$.
 In other words $\tP_S$ has empty interior.
\end{proof}

\subsection{Algebraic Laminations and Cannon-Thurston Maps}
\begin{defn}
An {\bf algebraic lamination} \cite{bfh-lam, mitra-endlam, chl08a, kl10} for a hyperbolic group $\Ga$ is a
$\Ga$-invariant,  closed subset $\LL \subseteq \partial^2
\Ga =(\partial \Ga \times \partial \Ga \setminus \Delta)/\sim$, where $(x,y)\sim(y,x)$ denotes the flip and $\Delta$ 
is the diagonal in $\partial \Ga \times \partial \Ga$. 

If $(p,q) \in \LL$, then any bi-infinite geodesic joining $p, q$ in $\Ga$ is called a leaf of $\LL$. \end{defn}

Cannon-Thurston maps, when they exist automatically define a natural algebraic lamination.
\begin{defn} \cite{mitra-pams} Suppose that a Cannon-Thurston map $\partial i$ exists for the pair $(\Ga, W)$.
We define the Cannon-Thurston lamination $\LL_{CT}$ as follows:\\
 $\LL_{CT} = 
 \{ (p,q) \in \partial^2 {\Gamma} | 
p \neq q$  and $\partial {i} (p) = \partial {i} (q) \}$.
\end{defn}

The relationship between algebraic laminations, Cannon-Thurston maps and quasiconvexity has been explored recently in \cite{mahan-rafi}.
We note the following facts that will be relevant for us.

\begin{lemma}\cite[Lemma 2.1]{mitra-pams} Let $\Ga$ be a  hyperbolic group acting on a hyperbolic metric space $W$ such that a Cannon-Thurston map exists
for the pair $(\Ga, W)$. Then some (any) $\Ga-$orbit is quasiconvex if and only if $\LL_{CT} =\emptyset$.\label{nonemptyiff}
\end{lemma}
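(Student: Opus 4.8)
The plan is to prove both implications by showing that each of the two conditions is equivalent to the statement that the orbit map $i:\Ga\to W$ is a quasi-isometric embedding. First I would dispose of two routine points. The ``some $\iff$ any'' clause is immediate: any two $\Ga$-orbits $\Ga o$, $\Ga o'$ lie at Hausdorff distance $\le d_W(o,o')$, and quasiconvexity of a subset of a hyperbolic space is invariant under bounded Hausdorff perturbation, so we may fix one basepoint $o$. Next, I would invoke the standard fact that $\Ga o$ is quasiconvex in $W$ if and only if $i$ is a quasi-isometric embedding: if $i$ is a quasi-isometric embedding then its image sits quasiconvexly in the ambient hyperbolic space, and conversely, since $\Ga$ acts properly discontinuously on $W$ and coboundedly on its Cayley graph, a Milnor--\v Svarc argument applied to the quasiconvex hull of $\Ga o$ (which is connected, quasiconvex, hence hyperbolic, and on which $\Ga$ acts geometrically) gives $\Ga\simeq_{\mathrm{QI}}\mathrm{Hull}(\Ga o)\hookrightarrow W$, the inclusion being a quasi-isometric embedding because $\mathrm{Hull}(\Ga o)$ is quasiconvex. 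So the lemma reduces to: $i$ is a quasi-isometric embedding $\iff\partial i$ is injective, i.e. $\iff\LL_{CT}=\emptyset$.

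The forward direction is soft. A quasi-isometric embedding of Gromov-hyperbolic geodesic spaces admits a continuous extension to the Gromov compactifications whose boundary value is a topological embedding $\partial\Ga\hookrightarrow\partial W$; since a continuous extension of $i$ is unique when it exists, this boundary embedding must be the Cannon--Thurston map $\partial i$. Hence $\partial i$ is injective and $\LL_{CT}=\emptyset$.

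For the converse I would argue the contrapositive: assuming $i$ is \emph{not} a quasi-isometric embedding, I would manufacture a pair $(\alpha,\beta)\in\LL_{CT}$. Orbit maps are always coarsely Lipschitz, so failure of the quasi-isometric embedding inequality supplies elements $k_n\in\Ga$ with $d_\Ga(1,k_n)\to\infty$ but $d_W(o,k_no)/d_\Ga(1,k_n)\to 0$ (after translating a base vertex to $1$). Fix Cayley-graph geodesics $\tau_n$ from $1$ to $k_n$ and let $p_n$ be a vertex of $\tau_n$ maximizing the $W$-distance from $i(p_n)$ to the geodesic $[o,k_no]_W$; call this maximal distance $\delta_n$. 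Conjugating by $p_n^{-1}$ we may take $p_n=1$, so $\tau_n$ becomes a geodesic through $1$ from $a_n$ to $b_n$ whose $i$-image passes through $o$ and $d_W(o,[a_no,b_no]_W)=\delta_n$. Passing to subsequences, $a_n\to\alpha$ and $b_n\to\beta$ in $\widehat\Ga$ (compactness), and since $\tau_n$ passes through $1$ with both arms eventually of unbounded length, $\alpha\neq\beta$. Properness of the $\Ga$-action on $W$ forces $a_no,b_no\to\infty$ in $W$ (otherwise only finitely many $a_n$, $b_n$ occur), and continuity of the Cannon--Thurston extension gives $a_no\to\partial i(\alpha)$, $b_no\to\partial i(\beta)$. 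Now if $\partial i(\alpha)\neq\partial i(\beta)$, then in the proper hyperbolic space $W$ the geodesics $[a_no,b_no]_W$ converge, uniformly on compact sets, to the bi-infinite geodesic $[\partial i(\alpha),\partial i(\beta)]_W$, whence $\delta_n=d_W(o,[a_no,b_no]_W)$ would have the finite limit $d_W\!\big(o,[\partial i(\alpha),\partial i(\beta)]_W\big)$; the same argument, applied to midpoints, rules out the cases where $\delta_n$ or one of the $\Ga$-arms stays bounded. Hence $\delta_n\to\infty$ while the only consistent outcome is $\partial i(\alpha)=\partial i(\beta)$, so $(\alpha,\beta)\in\LL_{CT}$ and $\LL_{CT}\neq\emptyset$.

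The hard part will be the converse, specifically organizing the ``bounded versus unbounded'' case distinctions in the construction above and keeping the geometric-limit bookkeeping (convergence of geodesics, continuity of $\partial i$, and properness) clean and correct. The most economical way to package it is to first establish the intrinsic characterization of when a Cannon--Thurston map exists --- for every $R>0$ there is $R'>0$ such that any Cayley-graph geodesic avoiding the $R'$-ball about $1$ has $i$-image avoiding the $R$-ball about $o$ --- and then use this to convert ``$i(\tau_n)$ has a deep bulge at $p_n$'' directly into a bound on how deep the relevant subgeodesic of $\tau_n$ can be, which hands over the two distinct boundary points $\alpha,\beta$ and their coincidence $\partial i(\alpha)=\partial i(\beta)$ with minimal effort. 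I would write the converse around that characterization, reverting to the bare-hands argument sketched above only if a fully self-contained treatment is preferred.
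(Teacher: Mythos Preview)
The paper does not actually supply a proof of this lemma: it is quoted verbatim from \cite{mitra-pams} and used as a black box. So there is no ``paper's proof'' to compare against. That said, your approach is exactly the standard one, and in fact the contrapositive argument you sketch for the converse direction is precisely the mechanism the paper \emph{does} use in the very next lemma (Lemma~\ref{current-lamn0}): pick points $a_n,b_n$ on a $\Ga$-geodesic witnessing failure of the quasigeodesic condition, find $c_n\in[a_n,b_n]$ with the $W$-geodesic $[i(a_n),i(b_n)]_W$ lying outside $N_n(i(c_n))$, translate $c_n$ to the identity, and pass to a limit to obtain a bi-infinite $\Ga$-geodesic whose ideal endpoints are identified by $\partial i$.

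One point worth tightening. Your argument as written presupposes $\delta_n\to\infty$ (this is what forces both arms of the recentred geodesic to be unbounded and hence $\alpha\neq\beta$), but you only gesture at why this holds (``the same argument, applied to midpoints''). This is genuinely the crux: from $d_W(o,k_no)/d_\Ga(1,k_n)\to 0$ alone it is not immediate that the $i$-image of $\tau_n$ strays arbitrarily far from $[o,k_no]_W$, since a Lipschitz path can stay close to a geodesic while failing to be a quasigeodesic (by backtracking). The clean fix is exactly what you propose in your last paragraph: invoke Mitra's criterion for existence of the Cannon--Thurston map (for every $M$ there exists $N$ such that $\Ga$-geodesics outside $B_\Ga(1,N)$ have $W$-geodesic realizations outside $B_W(o,M)$), which converts ``$\Ga$-geodesic through $1$ with both arms of length $\to\infty$'' and ``$W$-geodesic far from $o$'' into each other directly. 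Alternatively, one can run a halving/iteration argument to reduce to arbitrarily short distorted segments and contradict properness; either way this step deserves an explicit sentence or two rather than a parenthetical. Once that is in place, your proof is complete and matches the argument in \cite{mitra-pams}.
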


\begin{lemma} Let $\Ga$ be a  hyperbolic group acting on a hyperbolic metric space $W$ such that
 a Cannon-Thurston map exists
for the map $i:\Ga \to W$ identifying $\Ga$ with its orbit in $W$.
 Let $\LL$ be an algebraic lamination  in $\partial^2 \Ga$. Then exactly 
one of the following holds:

\begin{enumerate}
\item Either there exists $C_0$ such that for every leaf $l$ of $\LL$, $i(l)$ is a $C_0-$quasigeodesic in $W$.
\item Or $\LL \cap \LL_{CT} \neq \emptyset$ and hence contains a minimal closed $\Ga-$invariant subset of $\LL_{CT}$. Further, $\Ga$ orbits are not quasiconvex.
\end{enumerate}
\label{current-lamn0}
\end{lemma}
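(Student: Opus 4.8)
\textbf{Proof proposal for Lemma \ref{current-lamn0}.}

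The plan is to set up a dichotomy by quantifying over leaves of $\LL$ and invoking compactness of the space of geodesics. First I would fix a $\Ga$-equivariant identification of (the vertex set of) the Cayley graph of $\Ga$ with an orbit $\Ga \cdot o$ in $W$, and use the fact that $i$ is a quasi-isometric embedding on $\Ga$ (as a group it is hyperbolic, but more to the point the Cannon-Thurston hypothesis gives us the continuous boundary extension $\partial i$). For a leaf $l$ of $\LL$ with endpoints $(p,q) \in \partial^2\Ga$, consider the bi-infinite geodesic in $\Ga$ joining $p$ to $q$ and its image $i(l)$ in $W$. The key dichotomy is on the constant $C_0$: either there is a uniform $C_0$ that works for \emph{every} leaf, or there is a sequence of leaves $l_n$ whose images $i(l_n)$ are worse and worse quasigeodesics.

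In the second case I would argue as follows. Suppose no uniform quasigeodesy constant exists. Then pick leaves $l_n$ with endpoints $(p_n, q_n)$ and points on $i(l_n)$ witnessing failure of the $n$-quasigeodesic inequality; translating by $\Ga$ we may assume these witnessing configurations stay in a fixed compact region near $o$. Passing to a subsequence, the endpoints $p_n \to p$, $q_n \to q$ in $\partial \Ga \cup \Ga$ and the leaves converge (in the Hausdorff/Chabauty topology on geodesics) to a limiting bi-infinite geodesic $l$ in $\Ga$; since $\LL$ is closed and $\Ga$-invariant, $l$ is a leaf of $\LL$, so $(p,q) \in \LL$ with $p \neq q$. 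The failure of uniform quasigeodesy, preserved in the limit, forces $i(l)$ to be a non-quasigeodesic in $W$; but $\Ga$-geodesics that map to genuinely non-quasigeodesic paths are precisely those whose endpoints are identified by $\partial i$ — this is where I would use the continuity of $\partial i$ together with a standard argument (as in Mitra's work): if $\partial i(p) \neq \partial i(q)$ then $i(l)$ would have to fellow-travel the $W$-geodesic between these two distinct boundary points and hence be a uniform quasigeodesic, a contradiction. Therefore $\partial i(p) = \partial i(q)$, i.e. $(p,q) \in \LL_{CT}$, so $\LL \cap \LL_{CT} \neq \emptyset$. Since $\LL \cap \LL_{CT}$ is closed and $\Ga$-invariant, Zorn's lemma (or a routine minimal-set argument) produces a minimal closed $\Ga$-invariant subset inside it, and by Lemma \ref{nonemptyiff} non-emptiness of $\LL_{CT}$ means $\Ga$-orbits are not quasiconvex. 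In the first case the two alternatives are visibly exclusive, since a uniform quasigeodesy constant on all leaves in particular applies to any leaf of $\LL_{CT} \cap \LL$, forcing that intersection empty (a $\Ga$-geodesic whose image is a quasigeodesic has endpoints with distinct $\partial i$-images).

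I expect the main obstacle to be the limiting argument in the second case: making precise the claim that "failure of $C_0$-quasigeodesy for all $C_0$" passes to a subsequential Hausdorff limit of leaves to yield a single leaf whose image is honestly non-quasigeodesic. One has to be careful that the witnessing points do not escape to infinity after re-centering, and that the Hausdorff limit of the $\Ga$-geodesics $[p_n,q_n]$ is again a bi-infinite geodesic (not a ray or a finite segment) — this uses $p_n \not\to q_n$, which itself needs justification from the uniform-failure hypothesis (bounded-length leaves map to bounded, hence trivially quasigeodesic, paths). The rest — producing a minimal subset, and the exclusivity of the two cases — is routine given Lemma \ref{nonemptyiff} and the standard fact that in a hyperbolic space a path that fellow-travels a geodesic between two distinct boundary points is a quasigeodesic with controlled constants.
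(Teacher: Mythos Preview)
Your overall strategy matches the paper's: assume no uniform $C_0$ works, extract leaves $l_n$ witnessing failure of $n$-quasigeodesy, recenter by $\Ga$, pass to a limit leaf $l_\infty \in \LL$, and show its endpoints are identified by $\partial i$. The use of Lemma~\ref{nonemptyiff} and the minimal-set argument at the end are also the same.

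The gap is in your justification of the key step. You assert that if $\partial i(p) \neq \partial i(q)$ then $i(l)$ ``would have to fellow-travel the $W$-geodesic between these two distinct boundary points and hence be a uniform quasigeodesic''. This is not a standard fact, and as stated it is false: a bi-infinite Lipschitz path in a hyperbolic space whose two ends converge to distinct boundary points need not stay within bounded distance of the geodesic joining them (think of a path making deeper and deeper perpendicular detours as it heads to each end). The Cannon--Thurston hypothesis gives only that $i(l(t)) \to \partial i(p)$ in the Gromov compactification as $t \to +\infty$, and convergence to a boundary point does not control distance to a fixed geodesic landing there. (A related slip: you refer to $i$ as a quasi-isometric embedding; it is not assumed to be one --- indeed if it were, $\LL_{CT}$ would be empty by Lemma~\ref{nonemptyiff}.)

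The paper bypasses this by working directly with the witnessing data rather than with the limit leaf alone. Failure of $n$-quasigeodesy for $i(l_n)$ produces $a_n, b_n \in l_n$ and $c_n$ on the $\Ga$-geodesic $[a_n,b_n]$ such that the $W$-geodesic $\alpha_n$ joining $i(a_n), i(b_n)$ misses the $n$-ball about $i(c_n)$. After translating $c_n$ to the identity, $\alpha_n$ lies outside $N_n(o)$, so its visual diameter from $o$ tends to zero; equivalently $(i(a_n)\mid i(b_n))_o \to \infty$. Passing to a subsequence, $a_n \to a_\infty$ and $b_n \to b_\infty$ in $\partial\Ga$ (distinct, since $c_n$ lies between them on $l_n$), and continuity of $\hat i$ then forces $\partial i(a_\infty)=\partial i(b_\infty)$ directly. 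This is exactly the ``main obstacle'' you anticipated, but its resolution is to track the $W$-geodesics $\alpha_n$, not to reason about quasigeodesy of the single image $i(l_\infty)$.
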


\begin{proof} We identify $\Ga$ with a Cayley graph (with respect to a finite generating set).

Suppose that Alternative (1) fails. Then for all  $n \geq 0$, there exist leaves $l_n$ 
of $\LL$ and
 $a_n, b_n \in l_n$, $c_n \in [a_n, b_n]$
(the geodesic in $\Ga$ joining $a_n, b_n$) such that the geodesic $\alpha_n$ in $W$
joining $a_n, b_n$ lies outside  $ N_n (c_n) $, where $N_n(c_n)$
denotes the $n-$ball around $c_n$ in $W$. Translating by a group element $g_n$ (in
$\Ga$) if necessary, we may assume that that $c_n$ is the identity element of $\Ga$. 

Since $\alpha_n$ lies outside  $ N_n (c_n) $, the visual diameter of $\alpha_n$
tends to zero as $n \to \infty$. Hence, passing to a limit we obtain
\begin{enumerate}
\item a leaf $l_\infty$ of $\LL$ as a limit of the $l_n$'s (using the fact that $\LL$ is closed).
\item The Cannon-Thurston map for $i: \Ga \to W$ identifies the end-points at infinity
of $l_\infty$.
\end{enumerate}

Hence $l_\infty \subset \LL \cap \LL_{CT}$. In particular, $\LL_{CT}$ is non-empty and hence by Lemma \ref{nonemptyiff}, 
$\Ga$ orbits are not quasiconvex. Since $\LL_{CT}$ is also an algebraic lamination, $ \LL \cap \LL_{CT}$ contains the minimal closed
$\Ga-$invariant subset of $\partial^2 \Gamma$ containing $l_\infty$. Alternative (2) follows.
\end{proof}

\subsection{Zariski Dense Subgroups} The following two Theorems are specializations to our context of general facts from \cite{mmo}. The corresponding
Theorems in \cite{mmo} are established in the general context of Zariski dense subgroups of $\pslc$. This hypothesis is always satisfied for 
Zariski dense subgroups.

\begin{theorem}\label{openfull}\cite[Theorem 4.1, Corollary 4.2]{mmo}
Let $\Ga$ be a degenerate Kleinian group and let $\Lambda (=S^2)$ be its limit set.
Let $\DD \subset \SSS$ be a collection of circles such that  $\cup_{D \in \DD} D$ contains a
nonempty open subset of $\Lambda$. Then there exists a $D \in \DD$ such that $\bbar{\Gamma D} = \SSS$.\end{theorem}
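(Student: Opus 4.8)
The plan is to obtain Theorem~\ref{openfull} as the specialization to our setting of \cite[Theorem 4.1, Corollary 4.2]{mmo}, which are proved there for an arbitrary Zariski dense discrete subgroup of $G=\pslc$. The only point that needs a separate argument is therefore that a degenerate Kleinian group is Zariski dense in $G$. To see this, let $L$ be the Zariski closure of $\Ga$ in the complex algebraic group $\pslc$. Every proper connected algebraic subgroup of $\pslc$ is contained in a Borel subgroup and so is solvable; hence if $L\neq G$ then $L$, having finitely many components, is virtually solvable, whence $\Ga\le L$ is virtually solvable and therefore an elementary Kleinian group, with $|\Lambda|\le 2$. This contradicts $\Lambda=S^2$, so $L=G$. (This is exactly the content of the remark preceding the statement: degeneracy forces Zariski density.)

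With $\Ga$ Zariski dense, \cite[Theorem 4.1]{mmo} provides the dichotomy that for any circle $C$ the orbit closure $\bbar{\Ga C}\subseteq\SSS$ either has empty interior or equals $\SSS$; call $C$ \emph{exceptional} in the first case. Given the family $\DD$ with $\bigcup_{D\in\DD}D\supseteq W$ for some non-empty open $W\subseteq\Lambda$, replace $\DD$ by $\DD'=\{D\in\DD: D\cap W\neq\emptyset\}$; since each point of $W$ lies on a member of $\DD$ which then meets $W$ at that very point, $\DD'$ still covers $W$, and for $D\in\DD'$ the set $D\cap W$ is a non-empty relatively open sub-arc of $D$ contained in $\Lambda$. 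Now \cite[Corollary 4.2]{mmo} is precisely the assertion that the exceptional circles cannot cover a non-empty open subset of $\Lambda$: the union of their (nowhere dense) orbit closures is trapped in a proper $\Ga$-invariant subset of $\SSS$, and Zariski density, together with minimality of the $\Ga$-action on $\Lambda$, forbids the circles lying in such a subset from filling an open piece of $\Lambda$. Hence some $D\in\DD'\subseteq\DD$ is non-exceptional, i.e.\ $\bbar{\Ga D}=\SSS$, which is the conclusion.

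The substantive work — the absence of orbit closures with non-empty interior other than $\SSS$ itself, and the covering estimate for exceptional circles — is entirely contained in \cite{mmo}, whose proofs rest on the analysis of the $H$-action on $\Ga\backslash G$ and on Zariski density; we import these wholesale, so on our side the only genuine obstacle is the (routine) Zariski-density verification above. Two features of the degenerate case deserve comment. First, since $\Lambda=S^2$, every circle already lies in $\Lambda$ and meets it in a full sub-arc, so the hypothesis of \cite[Corollary 4.2]{mmo} is especially transparent here; this does not trivialize the conclusion, since a closed totally geodesic immersed surface in $M$ still furnishes a circle with non-dense $\Ga$-orbit. Second, for $\Lambda=S^2$ the non-wandering set of the frame flow on $\Ga\backslash G$ is all of $\Ga\backslash G$, so by Zariski density the $\Ga$-action on $\SSS$ is topologically transitive and $\{C\in\SSS:\bbar{\Ga C}=\SSS\}$ is a dense $G_\delta$; this consistency check is, however, not enough by itself, since $\DD$ could a priori be meagre in $\SSS$, and the force of \cite[Corollary 4.2]{mmo} is exactly that a family covering an open subset of $\Lambda$ must meet this residual set regardless.
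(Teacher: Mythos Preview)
Your proposal is correct and matches the paper's own treatment: the paper does not prove Theorem~\ref{openfull} at all but simply records it as a specialization of \cite[Theorem 4.1, Corollary 4.2]{mmo} to degenerate Kleinian groups, noting (in the sentence introducing the subsection) that the Zariski-density hypothesis of \cite{mmo} is satisfied. You supply more detail than the paper does --- an explicit verification of Zariski density and some commentary on how the cited results apply --- but the approach is identical.
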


\begin{theorem}\label{zdcor}\cite[Theorem 5.1]{mmo} Let $\Ga$ be a degenerate Kleinian group.
Let $C \subset  S^2$ be a circle, and suppose $D \in \overline{\Gamma C} \setminus  {\Gamma C}$. 
Let $Stab(D)$ denote the stabilizer of $D$ in $\Ga$. If 
$Stab(D)$ is a non-elementary group, then $\overline{\Gamma C} = \SSS.$
\end{theorem}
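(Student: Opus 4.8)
The plan is to reduce to the ``open set'' criterion of Theorem~\ref{openfull} and to feed that criterion a one-parameter family of circles built inside $\overline{\Ga C}$ by a renormalization argument. Write $Z:=\overline{\Ga C}$; it is closed and $\Ga$-invariant. Since $D\in Z\setminus\Ga C$ there are $\ga_n\in\Ga$ with $\ga_n C\to D$, and $\ga_n C\in\Ga C$ together with $D\notin\Ga C$ forces $\ga_n C\neq D$, so $D$ is a \emph{non-isolated} point of $Z$. After conjugating in $G=\pslc$ I may assume $D=\hhat{\R}$; then $Stab_G(D)$ has identity component $\pslr$, and $\Delta:=Stab_\Ga(D)$ is a discrete subgroup of $Stab_G(D)$ whose Fuchsian part $\Delta\cap\pslr$ is non-elementary (a finite-index subgroup of a non-elementary group). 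To conclude $Z=\SSS$ it suffices, by Theorem~\ref{openfull} applied to $\DD=Z$, to show that $\bigcup_{D'\in Z}D'$ contains a nonempty open subset of $\Lambda=S^2$: that theorem would then give $D^{*}\in Z$ with $\overline{\Ga D^{*}}=\SSS$, and closedness and $\Ga$-invariance of $Z$ yield $\SSS=\overline{\Ga D^{*}}\subseteq Z$. Since the union of a one-parameter family of circles tangent to $D$ at a fixed point of $D$ is an open subregion of $S^2$, it is enough to exhibit such a family inside the closed set $Z$.

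The first place non-elementarity of $Stab(D)$ is used is to produce circles in $Z$ other than $D$ itself. Choose a hyperbolic element $h\in\Delta\cap\pslr$; after a further conjugation inside $Stab_G(D)$ take $h=\mathrm{diag}(\lambda,\lambda^{-1})$ with $\lambda>1$ and fixed points $0,\infty\in D$. For large $n$ write the approximating circle as $\ga_n C=\exp(i\eta_n)\,D$ with $0\neq\eta_n\to 0$ in $\mathfrak{sl}_2(\R)$, using $\mathfrak{sl}_2(\C)=\mathfrak{sl}_2(\R)\oplus i\,\mathfrak{sl}_2(\R)$ and that $i\,\mathfrak{sl}_2(\R)$ is a complement to the Lie algebra of $Stab_G(D)$. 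Since $h\in\Ga$ fixes $D$ we get $h^k(\ga_n C)=\exp\!\big(i\,\mathrm{Ad}(h^k)\eta_n\big)D\in\Ga C\subseteq Z$, and $\mathrm{Ad}(h^k)$ dilates the strictly upper- and lower-triangular components of $\eta_n$ by $\lambda^{\pm 2k}$ while fixing the diagonal component. I would then pick $k=k_n\to\infty$ so that $\mathrm{Ad}(h^{k_n})\eta_n$ stays bounded and bounded away from $0$; this is possible whenever $\eta_n$ has a nonzero component off the centralizer of $h$, and if infinitely many $\eta_n$ are purely diagonal one instead uses a hyperbolic element of $\Delta\cap\pslr$ with a different axis, available because $\Delta\cap\pslr$ is non-elementary. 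Passing to a subsequence, $h^{k_n}(\ga_n C)$ converges to a circle $D'\neq D$ tangent to $D$ at $\infty$, and $D'\in Z$ since $Z$ is closed.

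The remaining step, which is the main obstacle, is to ``spread'' $D'$ to a full one-parameter family of circles tangent to $D$ sitting inside $Z$: one translates $D'$ by $\Delta$ and by $\Ga$, reruns the renormalization on circles of $Z$ lying near $D'$ and near its translates, and uses the recurrence of the non-elementary group $\Delta\cap\pslr$ acting on $D$ to keep the renormalized circles from collapsing onto a point or back onto $D$. This is exactly the content of \cite[Theorem~5.1]{mmo}; since a degenerate Kleinian group has limit set $S^2$ and is therefore Zariski dense in $\pslc$, that theorem applies directly and, combined with the reduction above, gives $Z=\SSS$. I would stress that non-elementarity of $Stab(D)$ — rather than mere infiniteness — is essential here: for virtually cyclic stabilizers the conclusion genuinely fails, as the ``nearly Fuchsian'' quasi-Fuchsian examples recalled in \cite{mmo} show.
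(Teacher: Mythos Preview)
The paper does not supply its own proof of Theorem~\ref{zdcor}: it is quoted verbatim from \cite[Theorem~5.1]{mmo}, with the sole remark that degenerate Kleinian groups, having limit set $S^2$, are Zariski dense in $\pslc$, so that the general theorem of \cite{mmo} applies. There is thus no argument in the paper to compare against; and in the end your proposal makes exactly the same move, since at the decisive step you invoke \cite[Theorem~5.1]{mmo} itself.

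Your additional material---the reduction to Theorem~\ref{openfull}, the parametrization $\ga_nC=\exp(i\eta_n)D$ via the transversal $i\,\mathfrak{sl}_2(\R)$ to $\mathfrak{stab}_G(D)$, and the renormalization by powers of a hyperbolic $h\in\Delta\cap\pslr$ to produce a single circle $D'\neq D$ tangent to $D$ at a fixed point of $h$---is correct and is indeed the opening move of the proof in \cite{mmo}. But you yourself identify the ``main obstacle'' as spreading $D'$ to a full one-parameter family of tangent circles inside $Z$, and you resolve it by citing \cite[Theorem~5.1]{mmo}. Since that theorem \emph{is} the statement you are proving (the degenerate hypothesis enters only to ensure Zariski density), this is circular as a standalone argument. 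If your aim was to match the paper, you have done strictly more than necessary; if your aim was an independent proof, the spreading step is a genuine gap. Closing it requires iterating the renormalization---now applied to the $\Delta$-orbit of $D'$, using hyperbolic elements of $\Delta\cap\pslr$ with varying axes (available precisely because $\Delta$ is non-elementary)---to manufacture tangent circles at a dense set of heights, and this is the substantive content of \cite{mmo} that must be reproduced rather than invoked.
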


The next statement is valid for arbitrary hyperbolic 3-manifolds.

\begin{theorem} \label{bddplane} \cite[Corollary B.2]{mmo}
 Let $N$ be an arbitrary hyperbolic 3-manifold. Let $K \subset N$
 be a compact subset.
Then either the
 set of compact geodesic surfaces in $N$ contained in $K$ is finite; or the union of
compact geodesic surfaces is dense in $N$, and $N$ is compact.
 \end{theorem}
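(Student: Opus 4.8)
The plan is to pass to the frame bundle and argue by contradiction, exploiting the rigidity of $H=\pslr$ inside $G=\pslc$. Identify the oriented orthonormal frame bundle of $N$ with $\Ga\backslash G$. A compact immersed totally geodesic surface $S\subset N$ is covered by a single compact (right) $H$-orbit $R_S\subset\Ga\backslash G$, the set of frames whose first two vectors span the tangent plane of $S$; this orbit is a quotient $\Ga_S\backslash H$ by a cocompact Fuchsian subgroup, and conversely every compact $H$-orbit projects to a compact geodesic surface. If $S\subset K$, then $R_S$ lies in the compact set $\KK\subset\Ga\backslash G$ of frames based over $K$. So, assuming there are infinitely many pairwise distinct compact geodesic surfaces contained in $K$, we get infinitely many pairwise distinct compact $H$-orbits $R_n\subset\KK$, and we must conclude that $N$ is compact and that the union of the compact geodesic surfaces is dense.

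\medskip

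First I would take a limit: the closed subsets of the compact set $\KK$ form a compact space in the Hausdorff topology, so along a subsequence $R_n\to\Omega$, where $\Omega\subseteq\KK$ is nonempty, closed and $H$-invariant, hence compact, and $\Omega\subseteq\overline{\bigcup_n R_n}$. The structural input is a rigidity statement for compact $H$-orbits: a compact $H$-orbit $zH$ has a tubular neighbourhood in $\Ga\backslash G$ on which the $H$-action, in the normal directions, is the adjoint action of $H$ on $\mathfrak g/\mathfrak h\cong i\,\mathfrak{sl}_2(\bbR)$; this representation is irreducible and has no nonzero bounded orbit (the orbit of any nonzero vector under a cocompact Fuchsian group is unbounded), so a thin enough tube around $zH$ contains no compact $H$-orbit other than $zH$. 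Hence a fixed compact $H$-orbit cannot be approached in the Hausdorff topology by distinct compact $H$-orbits. Feeding this into $R_n\to\Omega$: $\Omega$ cannot be a finite union of compact $H$-orbits, since otherwise connectedness of the $R_n$ would eventually trap infinitely many of them in a thin tube around one fixed piece, forcing them to coincide with it. On the other hand, if $\Omega$ did not accumulate transversally on itself at any point, then every $H$-orbit in $\Omega$ would be relatively open, and a compact $H$-invariant set with this property is a finite union of compact orbits. Therefore $\Omega$ does accumulate transversally on itself: there are $y_0\in\Omega$ and $y_m\in\Omega$ with $y_m=y_0g_m$, $g_m\to e$ in $G$ and $g_m\notin H$.

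\medskip

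Next I would run the standard unipotent-dynamics endgame. Using $H$-invariance of $\Omega$, replace $g_m$ by its component in the slice $\exp(i\,\mathfrak h)$ transverse to $H$, so that $g_m=\exp(i\eta_m)$ with $0\neq\eta_m\to0$ in $\mathfrak h$. Conjugating the displacements by the (real) geodesic-flow subgroup of $A\cap H$ — after a bounded rotation in $H$ if necessary — and invoking the usual shearing/polynomial-drift argument as in \cite{mmo}, one steers the limiting displacement into the $V$-direction; compactness of $\Omega$ keeps the translated basepoints inside $\Omega$, and one concludes that $\Omega$ is invariant under a nontrivial element, hence under the whole one-parameter subgroup $V$. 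Since $\langle H,V\rangle=G$ (the Lie subalgebra of $\mathfrak g$ generated by $\mathfrak h$ and $i\,\mathfrak u$ is all of $\mathfrak g$) and $\Omega$ is closed, $\Omega$ is $G$-invariant, so $\Omega=\Ga\backslash G$. But $\Omega\subseteq\KK$ forces $\KK=\Ga\backslash G$, hence $K=N$ and $N$ is compact. Finally $\Ga\backslash G=\Omega\subseteq\overline{\bigcup_n R_n}$ projects down to show that the union of the $S_n$, a fortiori of all compact geodesic surfaces, is dense in $N$. This is exactly the second alternative of the dichotomy.

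\medskip

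The main obstacle is the last step: upgrading ``$\Omega$ accumulates transversally on itself at one point'' to ``$\Omega$ is $G$-invariant''. This is precisely where the rigidity of $H<G$ is indispensable — the analogue fails for $\pslr<\pslr\times\pslr$ — and it needs care on two fronts. One must be sure that after the slice reduction the displacements $\eta_m$ are genuinely nonzero, so that a direction transverse to (the normaliser of) $H$ is really present; the fact that $g_m\to e$ with $g_m\notin H$ forces $g_m$ out of the normaliser of $H$ eventually, which handles this. And one must organise the conjugations, and the choices of diverging parameters and of subsequences, so that the translated basepoints converge inside the compact set $\Omega$ and the displacement limits onto a nontrivial unipotent — here the irreducibility of the $\operatorname{Ad}(H)$-action on $\mathfrak g/\mathfrak h$ is what guarantees the steering is possible. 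A secondary but genuine point is the tubular-neighbourhood rigidity lemma for compact $H$-orbits used above; without it one cannot exclude the degenerate possibility that $\Omega$ is a single compact geodesic surface around which the others merely cluster.
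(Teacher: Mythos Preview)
The paper does not prove this statement; it is quoted verbatim as \cite[Corollary B.2]{mmo} and used as a black box. So there is no ``paper's own proof'' to compare against --- your proposal is effectively a sketch of the McMullen--Mohammadi--Oh argument, and the overall architecture (pass to the frame bundle, take a Hausdorff/closure limit $\Omega$ of the compact $H$-orbits, rule out that $\Omega$ is a finite union of compact orbits via an isolation lemma, then use unipotent shearing to enlarge the invariance group to all of $G$) is indeed the standard one and is what \cite{mmo} does.

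That said, there is a genuine gap in your endgame, and it is precisely at the point you yourself flag. From transverse accumulation you obtain, after shearing, a point $y_\infty\in\Omega$ and a nontrivial $v\in V$ (or $AV$) with $y_\infty v\in\Omega$. This gives only $\Omega v\cap\Omega\neq\emptyset$, not $\Omega v=\Omega$; and even if you had invariance under a single $v$, that yields only a cyclic subgroup, not the full one-parameter group $V$. The passage ``invariant under a nontrivial element, hence under the whole one-parameter subgroup $V$'' is not justified as written. The standard repair --- and this is exactly how the present paper organises its own main argument in Section~\ref{maint} --- is to drop from the $H$-invariant set to a $U$-minimal subset $Y\subset\Omega$. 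The crucial algebraic fact is that $AV$ normalises $U$ (whereas nothing useful normalises $H$), so $Yf$ is again $U$-invariant for $f\in AV$, and $U$-minimality upgrades $Yf\cap Y\neq\emptyset$ to $Yf=Y$. Running the shearing with varying target scales then produces invariance under arbitrarily small elements of $AV\setminus\{1\}$, hence under a one-parameter subgroup $L\subset AV$, and finally $\langle H,L\rangle=G$ gives $\Omega\supset\overline{YH}\cdot G=\Gamma\backslash G$. Your isolation lemma for compact $H$-orbits is correct (the $\operatorname{Ad}(\Gamma_z)$-orbit of the transverse coordinate is forced to stay in the tube and hence be bounded, which for a cocompact Fuchsian $\Gamma_z$ forces it to vanish), but without the $U$-minimality step the argument does not close.
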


We shall also need the following general  Lie groups fact proved in \cite{mmo}:

\begin{theorem} \cite[Theorem 8.2]{mmo} Let $g_n \to id$ in $G\setminus AN$ and $G_0$
be a neighborhood of the identity. Then there exist $u_n,u_n'
 \to \infty $ in $U$
such that after passing to a subsequence, we have
$u_ng_nu_n' \to g \in G_0 \cap (AV − {id})$.\label{av} \end{theorem}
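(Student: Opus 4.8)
The cited result is \cite[Theorem 8.2]{mmo}; the plan is to reprove it by a direct computation with $2\times 2$ matrices, implementing the ``polynomial divergence'' heuristic of Ratner--Margulis unipotent dynamics for the two-sided action of $U\times U$. Write $g_n=\begin{pmatrix} a_n & b_n\\ c_n & d_n\end{pmatrix}$ with $a_nd_n-b_nc_n=1$; since $g_n\to\mathrm{id}$ we have $a_n,d_n\to1$ and $b_n,c_n\to0$, and the hypothesis $g_n\notin AN$ means that either $c_n\neq0$ (the big Bruhat cell) or $g_n$ is upper triangular with non-real diagonal entry $\mu_n$. Writing $u(s)=\begin{pmatrix} 1 & s\\ 0 & 1\end{pmatrix}\in U$ for $s\in\reals$, one computes
\[
u(s)\,g_n\,u(t)=\begin{pmatrix} a_n+sc_n & (a_n+sc_n)\,t+b_n+sd_n\\ c_n & c_nt+d_n\end{pmatrix}.
\]
The lower-left entry is $c_n\to0$, so every subsequential limit of such products is upper triangular; the task is to choose $s_n,t_n\in\reals$ with $s_n,t_n\to\infty$ so that $u(s_n)g_nu(t_n)$ converges to a fixed, prescribed small element of $AV\setminus\{\mathrm{id}\}$ lying in $G_0$, say $\begin{pmatrix}\lambda & i\lambda\nu\\ 0 & \lambda^{-1}\end{pmatrix}$ with $(\lambda,\nu)\neq(1,0)$.

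First I would fix such a target and solve for the parameters. Imposing $a_n+s_nc_n\to\lambda$ fixes the scale of $s_n$ (so that $s_nc_n$ converges), whereupon $c_nt_n+d_n\to\lambda^{-1}$ forces $t_n$ to be, to leading order, a definite multiple of $s_n$; with that relation the divergent part of the $(1,2)$-entry cancels, and one is left with $u(s_n)g_nu(t_n)=\begin{pmatrix}\lambda_n & \lambda_n\varepsilon_n+b_n\\ c_n & \lambda_n^{-1}+o(1)\end{pmatrix}$, where $\lambda_n=a_n+s_nc_n$ and $\varepsilon_n=t_n+s_nd_n/\lambda_n$ is a real linear combination of $1$ and $d_n/\lambda_n$. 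The decisive point is that the target's off-diagonal term $i\lambda\nu$ requires $\varepsilon_n$ to approach the purely imaginary number $i\nu$, which is possible with real $s_n,t_n$ precisely because $1$ and $d_n/\lambda_n$ are $\reals$-independent for each $n$ — their difference from a real number is small but nonzero, and $s_n\to\infty$ magnifies it — and this non-reality traces back, through $\lambda_n=a_n+s_nc_n$, to $c_n\neq0$. When instead $g_n$ is upper triangular with $\mu_n\notin\reals$, the same computation runs with $\mu_n^{-1}-\mu_n$ (which has nonzero imaginary part) playing the role of $c_n$; and when $g_n$ happens to be real one simply steers the limit into $A\setminus\{\mathrm{id}\}\subset AV\setminus\{\mathrm{id}\}$ by sending the $(1,2)$-entry to $0$. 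Passing throughout to a subsequence so that the relevant limit directions of $c_n/|c_n|$, of $\mu_n$, and of the various real-to-imaginary part ratios exist (harmless, since the assertion is subsequential), one obtains $u(s_n)g_nu(t_n)\to g\in(AV\setminus\{\mathrm{id}\})\cap G_0$.

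It then remains to verify $s_n,t_n\to\infty$, which is forced: after linearization, $(s_n,t_n)$ solves a $2\times2$ real system whose coefficient determinant is comparable to $|c_n|$ (respectively to $|\mathrm{Im}\,\mu_n|$) and hence tends to $0$, while the right-hand side tends to the fixed nonzero target; by Cramer's rule the solution must diverge. \textbf{The main obstacle} is the tension between the reality of $U$ and the possibly genuinely complex direction of the perturbation $g_n-\mathrm{id}$: $U$ supplies only real horocyclic translations, yet $g_n$ may point in an imaginary-unipotent, complex-diagonal, or lower-triangular direction, so one must check that the bilinear/quadratic cross-terms of $u(s)g_nu(t)$ — chiefly the term involving $s_nt_nc_n$ — recover exactly the missing imaginary direction, landing the limit inside $AV$ and not in the real unipotent $U$-direction, the lower-triangular $\bbar{N}$, or the compact rotation torus. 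This is also where the hypothesis $g_n\notin AN$ (rather than only $g_n$ outside the Borel subgroup) is used: the extra, thin case $g_n\in B\setminus AN$ is absorbed by the variant computation indicated above.
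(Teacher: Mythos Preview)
The paper does not prove this statement: Theorem~\ref{av} is quoted verbatim as \cite[Theorem 8.2]{mmo} and used as a black box in Section~\ref{maint}, with no argument supplied. There is therefore no ``paper's own proof'' to compare your proposal against.

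That said, your approach --- the explicit $2\times 2$ matrix computation of $u(s)g_nu(t)$ and the Ratner--Margulis polynomial-divergence heuristic for the two-sided $U$-action --- is indeed the method used in \cite{mmo} to establish this result, so you are reconstructing the original argument rather than offering an alternative. A couple of cautions about your sketch: you speak of ``fixing a target'' $(\lambda,\nu)$ and solving for $s_n,t_n$, but since $s_n,t_n$ are \emph{real} while $c_n$ is complex, the quantities $s_nc_n$ and $t_nc_n$ are constrained to the real line through $c_n$, so you cannot prescribe $\lambda$ freely; the honest version is to let $s_n$ vary over a suitable interval and show the image curve meets $G_0\cap(AV\setminus\{\mathrm{id}\})$. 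Likewise, the assertion that $1$ and $d_n/\lambda_n$ are $\bbR$-independent ``for each $n$'' and that this independence suffices is too glib: both are close to $1$, and what you actually need is a quantitative lower bound on $\mathrm{Im}(d_n/\lambda_n)$ relative to the size of the other error terms, which is where the case analysis ($c_n\neq 0$ versus $g_n$ upper triangular with non-real diagonal) earns its keep. These are fixable, but as written the proposal is a plausible outline rather than a proof.
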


\section{H-minimal sets}  \label{hmin}

\begin{lemma} Let $M$ be a degenerate manifold and $K$ a compact core of $M$.
Then any totally geodesic immersed plane in $M$ intersects $K$. \label{intersect} \end{lemma}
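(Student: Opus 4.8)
The plan is to argue by contradiction. Suppose $P$ is a totally geodesic immersed plane in $M = \hyps/\Ga$ that is disjoint from the compact core $K$. Lifting to $\til M = \hyps$, we obtain a totally geodesic plane $\tP$ whose $\Ga$-translates are all disjoint from $\tK$, the universal cover of $K$ sitting inside $\hyps$. The key geometric fact I would exploit is that since $M$ is degenerate, the convex hull of the limit set $\Lambda_\Ga$ is all of $\hyps$ (as recalled right after Definition \ref{degdef}), equivalently $\Lambda_\Ga = S^2$. The goal is to derive a contradiction with the claim that $\partial\tP$ (a round circle in $S^2$) can be ``avoided'' by the $\Ga$-orbit of $\tP$.

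The main step is to understand the component of $M \setminus P$ (or rather of $\hyps \setminus \til P$ modulo the appropriate subgroup) that contains $K$. Since $\tP$ misses $\tK$, the core $K$ lies in one of the two open half-spaces, say $\bH^+$, bounded by $\tP$; and since $\tK$ is $\Ga$-invariant while $P$ is a single immersed plane, every $\Ga$-translate $g\tP$ also misses $\tK$, so $\tK \subset g\bH^+$ or $\tK \subset g\bH^-$ for each $g$. Now $\tK$ is coarsely dense in the convex hull of $\Lambda_\Ga = S^2$, i.e. coarsely dense in all of $\hyps$ (this is exactly the content of degeneracy: the compact core of a degenerate manifold carries the whole convex core, which is everything). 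But an open half-space $\bH^+$ bounded by a totally geodesic plane is \emph{not} coarsely dense in $\hyps$ — its complement $\bH^-$ contains points arbitrarily far from $\bH^+$. This is the contradiction: $\tK$ cannot be coarsely dense in $\hyps$ and simultaneously be contained in a single open half-space.

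I should be slightly careful about what ``$\tK$ coarsely dense in $\hyps$'' means and why it follows from degeneracy. The cleanest route: the map $i : \tK \to \tM = \hyps$ is a quasi-isometry onto its image when $K$ is a compact core (by compactness of $K$ one has $\hyps = \Ga \cdot \tK$ with bounded ``thickening''), so $\tK$ is coarsely dense in $\hyps$ with some constant $R_0$ depending only on $K$. Then if $\tP \cap \tK = \emptyset$ we get $\tK \subset \bH^+$, but the point of $\hyps$ at hyperbolic distance $R_0 + 1$ from $\tP$ on the $\bH^-$ side has no $\tK$-point within $R_0$ of it, a contradiction. The main obstacle is purely expository: making precise the coarse-density statement and ensuring $\tK$ here denotes the genuine preimage of $K$ in $\hyps$ (not just the Cayley graph), so that $\Ga\cdot\tK = \hyps$ holds on the nose rather than only coarsely. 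Once that is pinned down, the half-space argument is immediate and no further input (not even the Cannon–Thurston machinery) is needed.
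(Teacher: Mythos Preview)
Your overall strategy---lift, observe $\tK$ lies in one open half-space bounded by $\tP$, and contradict degeneracy---is exactly the paper's. The gap is in the step where you claim $\tK$ is coarsely dense in $\hyps$. This is false. The compact core $K$ does \emph{not} coarsely fill $M$: a degenerate manifold has noncompact ends, so there are points of $M$ at arbitrarily large distance from $K$, and hence points of $\hyps$ at arbitrarily large distance from $\tK$. Your parenthetical justification ``$\hyps = \Ga\cdot\tK$ with bounded thickening'' would force $M=K$, which only holds for closed manifolds. The sentence ``the compact core of a degenerate manifold carries the whole convex core'' conflates homotopy-theoretic carrying (which is true) with metric filling (which is not).

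The fix is to replace ``coarse density of $\tK$ in $\hyps$'' by ``density of the accumulation set of $\tK$ on $S^2$'', which is what the paper does. Since $\tK$ is $\Ga$-invariant and contains any $\Ga$-orbit, its set of accumulation points on $S^2$ is exactly $\Lambda_\Ga$, and by degeneracy $\Lambda_\Ga = S^2$. On the other hand, if $\tK$ is contained in an open half-space $\bH^+$ bounded by $\tP$, then every accumulation point of $\tK$ on $S^2$ lies in the closed hemisphere $\overline{\partial\bH^+}$, which is a proper subset of $S^2$. That is the contradiction. No coarse-density statement is needed, and indeed none is available.
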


\begin{proof}  Let $f : \hypp \to M$ be a totally geodesic  immersion
and $\til{f}: \hypp \to \hyps$ be a lift to the universal cover.

Let $K$ be the compact core and $i: K \to M$ denote inclusion. Then $i$ lifts to
$\til{i} : \til{K} \to \til{M}$. 

If $f(P)$ misses $i(K)$, then there exists a lift
$\til{f} (\hypp)$ missing $\til{i} ( \til{K})$, and so the limit set of $\til{i} ( \til{K})$
lies  entirely to one side of the boundary circle  $C = \partial (\til{f} (\hypp))$. But then the limit set
of $\til{K}$ is not all of $S^2$, contradicting the fact that $K$ is the compact core of a degenerate
manifold $M$.
\end{proof}

Since a (not necessarily totally geodesic plane) lying in a bounded neighborhood of a totally geodesic plane
necessarily has the same limit set, we have the following immediate Corollary:

\begin{cor} Let $M$ be a degenerate manifold and $K$ a compact core of $M$.
Let $P$ be a totally geodesic immersed plane in $M$, $\tP$ be a lift to $\tM$, $\tP_1$ be a plane lying in a bounded
neighborhood of $\tP$ and $P_1$ be the image of $\tP_1$ in $M$ under the covering projection.
Then $P_1$ intersects $K$. \label{intersectcor} \end{cor}

\begin{prop} Let $M$ be a degenerate manifold.
Let $X \subset M$ be a closed subset that is $H-$invariant. Equivalently, $X$ is a closed subset of $M$ that is
 a union of immersed
totally geodesic planes in $M$. Then there exists a minimal closed $H-$invariant subset $X_0 \subset X$. 
\label{h-exist} \end{prop}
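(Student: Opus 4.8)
The plan is to apply Zorn's Lemma to the poset of nonempty closed $H$-invariant subsets of $X$, ordered by reverse inclusion, so that a maximal element corresponds to a minimal closed $H$-invariant set. The only thing to check is the chain condition: given a chain $\{X_\alpha\}$ of nonempty closed $H$-invariant subsets of $X$, the intersection $\bigcap_\alpha X_\alpha$ is again closed and $H$-invariant (both of which are immediate), so the real content is showing the intersection is \emph{nonempty}. This is where compactness must enter, and here the manifold $M$ is not compact, so one cannot simply invoke the finite intersection property in $M$ directly.

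The key step is therefore to transfer the problem to a compact space. I would use the correspondence, noted in the excerpt, between closed $H$-invariant subsets of $M$ and closed $\Ga$-invariant subsets of $\SSS$, together with the fact that $\SSS$ sits inside $C_c(S^2)$, the space of closed subsets of $S^2$ of cardinality at least two, equipped with the Hausdorff topology. Since $S^2$ is compact, the hyperspace of all closed subsets of $S^2$ is compact Hausdorff; $C_c(S^2)$ is obtained by removing the singletons, but a descending chain of nonempty closed $\Ga$-invariant subsets of $\SSS$ has closure (in the full hyperspace) whose intersection is a nonempty closed set, and one must argue this intersection cannot consist only of singletons. Concretely: each $X_\alpha$ corresponds to a nonempty closed $\bbar{X_\alpha} \subset C_c(S^2)$; in the compact hyperspace of $S^2$, the sets $\bbar{X_\alpha}$ together with their limit points form a chain of nonempty closed sets, so $\bigcap_\alpha \bbar{X_\alpha} \neq \emptyset$; and any circle $C$ in this intersection is a genuine circle (not a point), because circles form a closed subset of $C_c(S^2)$ — a Hausdorff limit of circles on $S^2$ is again a circle (possibly of radius zero, i.e.\ a point, which is exactly what we must exclude). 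So the crux is a compactness-type argument that shows the limiting object is an honest plane rather than a degenerate one.

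An alternative, more self-contained route avoids the hyperspace formalism: pick any plane $P_0 \subset X$, and consider the closure $\bbar{HP_0}$ of a single $H$-orbit inside $X$. It suffices to find a minimal set inside this; but $HP_0 \cong H/\Stab$ and $\bbar{HP_0} \subset M$ — one would want $\bbar{HP_0}$, or at least the relevant part of it meeting a fixed compact core $K$ (which it does, by Lemma \ref{intersect} and Corollary \ref{intersectcor}), to supply compactness. The cleanest formulation is probably: every closed $H$-invariant $X \subset M$ meets the compact core $K$ (Lemma \ref{intersect}), and more is true — for a chain $\{X_\alpha\}$, each $X_\alpha \cap (H\text{-neighborhood of }K)$ is compact and nonempty, so the finite intersection property gives $\bigcap_\alpha X_\alpha \cap K \neq \emptyset$ hence $\bigcap_\alpha X_\alpha \neq \emptyset$. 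I expect the main obstacle to be pinning down precisely which compact set to intersect with: $X_\alpha$ itself is closed in $M$ but not compact, so one needs a correctly chosen compact "window" (either $K$ together with a bound coming from Corollary \ref{intersectcor}, or the corresponding statement in $\SSS \subset C_c(S^2)$) on which the chain restricts to a chain of \emph{compact} nonempty sets; once that is set up, Zorn's Lemma finishes the proof routinely.
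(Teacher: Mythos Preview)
Your second approach is exactly what the paper does, and your hesitation about ``which compact set to intersect with'' is unnecessary: the compact core $K$ itself works directly. Each $X_\alpha$ is closed in $M$, so $X_\alpha \cap K$ is closed in the compact set $K$, hence compact; and $X_\alpha \cap K$ is nonempty by Lemma~\ref{intersect}. Thus $\{X_\alpha \cap K\}$ is a chain of nonempty compact sets, the finite intersection property gives $\bigcap_\alpha X_\alpha \cap K \neq \emptyset$, and Zorn's Lemma finishes. (Your mention of an ``$H$-neighborhood of $K$'' is a red herring --- that would not be compact, and it is not needed.)

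Your first route through $\SSS \subset C_c(S^2)$ would also work but is more roundabout. The degeneration worry you raise --- that a Hausdorff limit of circles might be a point --- is exactly what Lemma~\ref{intersect} rules out: since every plane in every $X_\alpha$ meets $K$, the corresponding circles all bound disks in $\Hyp^3$ meeting a fixed compact set, so they stay uniformly bounded away from degenerating. In effect, Lemma~\ref{intersect} is doing the same job in both arguments; the paper just applies it in $M$ rather than translating to $\SSS$.
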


\begin{proof} The proof is standard (cf. \cite{margulis-opp, mmo}) given Lemma \ref{intersect}. Let $K$ denote a compact core of $M$.
Any  closed $H-$invariant $X_\alpha$ intersects $K$ by Lemma \ref{intersect}.
We consider the collection of closed $H-$invariant subsets $X_\alpha \subset X$ partially ordered as follows: $X_\alpha < X_\beta$
if $X_\alpha \cap K \subset X_\beta \cap K$.  
Hence, for any totally ordered collection  $\{ X_\alpha \}$, $\cap_\alpha X_\alpha \cap K$ is non-empty, as $K$ is compact. 
This forces $\cap_\alpha X_\alpha$ to be non-empty. The existence of a  minimal closed invariant subset $X_0 \subset X$
now follows by Zorn's Lemma.
\end{proof}

A minimal closed $H-$invariant subset will be called {\bf $H-$minimal}. We have thus proved the existence 
of $H-$minimal sets. We assume henceforth (using Proposition \ref{h-exist}) that $X=X_0$
is $H-$minimal.
 Recall that $U$  denotes the {\it real} upper triangular unipotent matrices. The next few sections
of the paper are devoted to finding sufficiently complicated $U$-minimal sets inside the $H-$minimal set $X$.
This will involve a number of ingredients.

\section{Horocycles and geodesics}\label{horgeo}
 In this section, we relate properly embedded geodesics in $M$ with 
properly embedded horocycles in $M$. We note here that
for the purposes of this section, we assume only that $M$ is degenerate without parabolics; no hypothesis on incompressibility
of its compact core $K$ are imposed.
Properly embedded geodesics in $M$ are said to be
{\bf exiting}.
A geodesic ray in $M$ is exiting if and only if any lift to the universal cover converges
to a point $p \in \partial \til{M}$ not belonging to the conical limit set (cf. \cite[Section 3]{mahan-lecuire}).
It follows that if  $\gamma$ is a geodesic ray in $M$ such that some (and hence every) lift $\gamma_1$
of $\gamma$ to $\til M$ lands on a conical limit point, then there exist
\begin{enumerate}
\item a compact subset $Q \subset M$
\item a sequence of times $t_n \to \infty$
such that $\gamma (t_n) \in Q$.
\end{enumerate}

 In fact non-exiting geodesics are {\em characterized} by the fact that they recur
to a compact subset of $M$ infinitely often.  In other words, geodesics in $M$ that recur infinitely often
to a compact subset of $M$ correspond precisely to geodesics in $\til M$ that land on the conical limit set.
We refer the reader to \cite[Section 3]{mahan-lecuire} for a detailed discussion. In Proposition
\ref{geod-horoc} we shall show that proper embeddedness of a horocycle forces the corresponding
geodesic ray to be exiting. In subsequent sections of this paper,
 we are going to be interested in horocycles that are  {\bf not}  properly embedded.
Proposition \ref{geod-horoc} then tells us that it suffices to look at geodesics that are not exiting.

Recall that $K$ is the compact core of $M$. Let $X$ be $H-$minimal. 
Let $P \subset X$ be one of the totally geodesic planes comprising $X$.
By minimality the closure $\bbar{P}=X$.
Let $\sigma (\subset P) $ be a (bi-infinite) horocycle contained in $P$ passing through the compact core $K$.
Choose $o \in K \cap \sigma$ such that $o$ does not lie in the self-intersection locus of $P$. It follows that
after lifting to the universal
cover $\til M$ 
and fixing a lift $o_1$ of $o$, there is a unique lift of $P$ through $o_1$. Call this $P_1$. Let $\sigma_1$
be the lift of $\sigma$ through $o_1$. Then $\sigma_1 \subset P_1$. Let $\tau_1$ be the geodesic ray  in $P_1$
starting at $o_1$ and asymptotic  to $\sigma (+\infty) = \sigma (-\infty) (\in \partial \til{M})$.
Project $\tau_1$ back to $M$ to obtain the  geodesic ray
 $\tau $   through $o$, contained in the (immersed) horodisk in $P$ bounded by $\sigma$. We say that
$\tau$ is the unique geodesic ray through $o$ in $P$ that is {\bf asymptotic} to $\sigma$.

\begin{prop} Let $\sigma, P, o$ be as above and let $\tau$ be
 the unique geodesic ray through $o$ in $P$  asymptotic to $\sigma$. If $\sigma$ is properly embedded in $M$,
then so is $\tau$. \label{geod-horoc} \end{prop}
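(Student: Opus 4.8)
The plan is to argue by contrapositive: I will show that if $\tau$ is \emph{not} properly embedded in $M$, then $\sigma$ is not properly embedded either. So suppose $\tau$ fails to be proper, which (since $\tau$ is a geodesic ray, in particular a continuous injection of $[0,\infty)$ whose image is closed iff the map is proper) means there is a compact set $Q \subset M$ and a sequence of times $t_n \to \infty$ with $\tau(t_n) \in Q$; equivalently, after lifting to $\til M$, the ray $\tau_1$ lands on a point of the conical limit set $\Lambda_c$, using the characterization of exiting geodesics recalled just before the statement (and in \cite[Section 3]{mahan-lecuire}). I want to transfer this recurrence from $\tau$ to $\sigma$.

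The first step is to set up the geometry in the universal cover. Work in $P_1 \cong \hypp$, with $\sigma_1 \subset P_1$ the horocycle and $\tau_1 \subset P_1$ the geodesic ray from $o_1$ asymptotic to the ideal endpoint $\xi := \sigma_1(+\infty) = \sigma_1(-\infty) \in \partial \hypp \subset S^2$ shared by $\sigma_1$ (its base-point) and $\tau_1$. In the upper half-plane model for $P_1$ with $\xi = \infty$, $\tau_1$ is a vertical ray and $\sigma_1$ is a horizontal horocycle; the nearest-point projection of $\sigma_1$ onto $\tau_1$ has image a bounded sub-ray, but more importantly, for each point $p$ on $\sigma_1$ the geodesic ray from $p$ toward $\xi$ is a vertical line, and these rays sweep out the horoball region bounded by $\sigma_1$. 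The key quantitative fact is that the point $p_n \in \sigma_1$ whose vertical coordinate matches that of $\tau_1(t_n)$ lies within uniformly bounded distance of $\tau_1(t_n)$ — no, that is false in general since $\sigma_1$ is infinitely long — so instead I will exploit the following: the horocycle $\sigma$ and the geodesic $\tau$ stay at bounded \emph{Hausdorff} distance of each other inside the immersed horodisk only near $o$, but at the \emph{ideal endpoint} they are asymptotic. The correct mechanism is therefore the "horocycle/geodesic" correspondence already flagged in Section \ref{horgeo}: a geodesic ray recurring infinitely often to a compact set gives rise to a recurrent (non-properly-embedded) horocycle, and the argument should produce, from the recurrence times $t_n$ of $\tau$, a sequence of points on $\sigma$ accumulating in $M$.

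Concretely, here is the step I expect to be the crux. Since $\tau(t_n) \in Q$ for all $n$, choose $\gamma_n \in \Ga$ with $\gamma_n \cdot \tau_1(t_n)$ lying in a fixed compact fundamental-domain-like set $\tilde Q \subset \til M$; pass to a subsequence so that $\gamma_n \tau_1(t_n) \to y_\infty \in \tilde Q$ and the lifted planes $\gamma_n P_1$ converge (in the Chabauty/Hausdorff sense on compacta) to a totally geodesic plane $P_\infty$, and the lifted geodesics $\gamma_n \tau_1$, suitably re-parametrized, converge to a bi-infinite geodesic $\tau_\infty \subset P_\infty$ through $y_\infty$. Because $\sigma_1$ is the horocycle in $P_1$ based at the (forward) endpoint of $\tau_1$ and passing at a controlled "height", the translated horocycles $\gamma_n \sigma_1 \subset \gamma_n P_1$ — re-based at the point of $\gamma_n \sigma_1$ closest to $\gamma_n \tau_1(t_n)$ — also subconverge, inside $P_\infty$, to the horocycle $\sigma_\infty \subset P_\infty$ based at the forward endpoint of $\tau_\infty$. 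The essential geometric input is a uniform bound: the distance in $P_1 \cong \hypp$ from $\tau_1(t_n)$ to the horocycle $\sigma_1$ equals the distance from $\tau_1(t_n)$ to the horocycle through $o_1$ based at $\xi$, and since $\tau_1(t_n) \to \xi$ along $\tau_1$ while $\sigma_1$ is exactly that horocycle, this distance is \emph{constant} (equal to $0$ if $\sigma_1$ passes through $o_1$, or more precisely equal to the fixed distance $d(o_1, \tau_1) $ — a constant independent of $n$). Hence $\gamma_n \sigma_1$ passes within a fixed bounded distance of $y_\infty$ for all $n$, so its image $\sigma$ in $M$ returns infinitely often to the compact set $N_R(\pi(\tilde Q))$ for appropriate $R$. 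Therefore $\sigma$ is not properly embedded, completing the contrapositive.

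The main obstacle, and the place where care is needed, is precisely that constancy-of-distance claim and its interaction with reparametrization: I must check that the point of $\sigma_1$ realizing the (bounded) distance to $\tau_1(t_n)$ drifts to the endpoint $\xi$ along $\sigma_1$ in a controlled way, so that after translating by $\gamma_n$ the re-based horocycles genuinely converge to a \emph{horocycle} (and not degenerate to a geodesic or escape). This is a standard feature of the $\hypp$-geometry — in the half-plane picture with $\xi = \infty$, the nearest point of the horizontal line $\sigma_1$ to a point high up on the vertical ray $\tau_1$ is simply its vertical projection, at constant hyperbolic distance — so the convergence of $(\gamma_n P_1, \gamma_n \tau_1, \gamma_n \sigma_1)$ to $(P_\infty, \tau_\infty, \sigma_\infty)$ goes through by the usual Arzel\`a--Ascoli / Chabauty compactness argument once these uniform bounds are in hand. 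No incompressibility or Cannon-Thurston input is needed here; this is purely a statement about a single plane and a single horocycle in a degenerate $M$ without parabolics.
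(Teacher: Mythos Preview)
Your contrapositive strategy rests on a single geometric claim, and that claim is false. You assert that the distance in $P_1\cong\hypp$ from $\tau_1(t_n)$ to the horocycle $\sigma_1$ is a constant independent of $n$. Take the upper half-plane model with $\xi=\infty$: then $\sigma_1$ is the horizontal line $\{y=h\}$ through $o_1=(0,h)$, and $\tau_1(t)=(0,he^{t})$. The nearest point on $\sigma_1$ to $\tau_1(t_n)$ is indeed the vertical projection $(0,h)$, but the hyperbolic distance is $\int_h^{he^{t_n}}\frac{dy}{y}=t_n$, which tends to infinity. Equivalently, horocycles based at $\xi$ and geodesics landing at $\xi$ diverge linearly from one another; they are asymptotic only in the sense of sharing an ideal endpoint, not in the sense of staying a bounded distance apart. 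Consequently, after translating by $\gamma_n$ so that $\gamma_n\tau_1(t_n)\in\tilde Q$, the horocycle $\gamma_n\sigma_1$ lies at distance $t_n$ from $\tilde Q$ and escapes to $\partial\til M$; your compactness argument produces no accumulation of $\sigma$ in $M$.

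The paper avoids this pitfall by arguing directly rather than by contrapositive. Assuming $\sigma$ is properly embedded, both of its ends exit the same degenerate end $E$ of $M$; one then takes points $a_n\to+\infty$, $b_n\to-\infty$ along $\sigma$ and shows that the geodesic chords $[a_n,b_n]$ of the horodisk also exit $E$ (otherwise a subsequential limit would be a genuine bi-infinite geodesic in $M$, contradicting that the horocyclic arcs $(a_n,b_n)$ converge to $\sigma$, whose two ideal endpoints coincide). Since $\tau$ meets every such chord and successive intersection points are a bounded distance apart, $\tau$ exits $E$ as well. The mechanism is thus the convexity of the horodisk together with the fact that its boundary horocycle has a single ideal endpoint, not any bounded-distance relation between $\tau$ and $\sigma$.
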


\begin{proof}
As in the discussion before the Proposition, let $o_1, \sigma_1, \tau_1, P_1$ be lifts of $o, \sigma, \tau$ and 
$P$
respectively. Let $D_1$ be the horodisk with $\sigma_1$ as its boundary and $D$ denote its projection to $M$.

If $a_n \to \infty$ and $b_n \to -\infty$ along $\sigma$, the hypothesis guarantees that $a_n, b_n$ exit 
the same degenerate end $E$ of $M$. Let $(a_n, b_n)$ denote the subarc of $\sigma$ joining $a_n, b_n$
and let $[a_n, b_n]$ be the geodesic in $D$ joining $a_n, b_n$. 

We first show that $[a_n, b_n]$ exits $E$.
If not, then there is a compact $Q \subset M$ such that $[a_n, b_n] \cap Q \neq \emptyset$ for all $n$. Passing
to a subsequential limit we obtain a geodesic $\gamma$ passing through $Q$ and hence (after lifting to $\til{M}$)
$\gamma (\infty)
\neq \gamma (-\infty)$, where $\gamma (\infty) = lim_n a_n$ and $\gamma (-\infty) = lim_n b_n$ in $\til M$. 
On the other hand any subsequential limit of $(a_n, b_n)$ is $\sigma$ (since 
 $a_n \to \infty$ and $b_n \to -\infty$ along $\sigma$) and hence (after lifting to $\til{M}$)
the limits of $a_n$ and $b_n$ must coincide. This contradiction proves that  $[a_n, b_n]$ exits $E$
for any $a_n \to \infty$ and $b_n \to -\infty$ along $\sigma$.
Now, choose $a_n = n$ and $b_n = -n$ and let $t_n = \tau \cap [a_n, b_n]$. Since  $[a_n, b_n]$ exits $E$,
it follows that $t_n$ exits $E$. It follows that $\tau$ exits $E$ (since the distance between $t_n$ and $t_{n-1}$
is uniformly bounded and hence the geodesic joining them in $\til M$ is uniformly bounded in length).
\end{proof}

\section{Algebraic Laminations}\label{lam} 
Our aim (till Proposition \ref{u-exist} below) is to establish the existence of a sufficiently complicated
$U-$minimal set $Y$ contained in the $H-$minimal set $X$
constructed in Proposition \ref{h-exist}. Here "sufficiently complicated" simply means that $Y$ does not consist
of a single horocycle. 

Our approach in constructing such a $Y$ is indirect.
We shall be interested in horocycles that are not properly embedded in $M$.
Proposition \ref{geod-horoc} allows us to turn our attention instead at geodesics that are not exiting. A natural
class of non-exiting geodesics are given by those that lie in a compact subset of $M$. 
The purpose of this section is to construct such a class of geodesics. 
Let $K$
be a compact core of $M$ as usual. Roughly speaking, the intersection $X \cap K$ furnishes for us
such a family of geodesics.

 Let $\til X$, $\til K$ and $\tP$ be pre-images of $X, K$ and $P$
respectively in $\til M$. If some $P (\subset X)$ is contained in a compact subset $Q$ of $M$,
then by minimality of $X$, so is $X$. It follows that all horocycles contained in planes comprising $X$ are contained in $Q$, 
and hence none are  properly embedded in $M$. In this situation, we define $Y_0$ to be the $U-$invariant subset of $G/U$ given by {\bf all} 
horocycles contained in planes comprising $X$. Proposition \ref{u-exist} will establish the existence of a $U-$minimal set $Y$ contained in $Y_0$ 
in this situation without much further work (the proof is a reprise of that of Proposition \ref{h-exist}).

In light of this we work under the following hypothesis for the purposes of this section:

\begin{hypo} \label{hypoth} The $H-$minimal set $X$ furnished by Proposition \ref{h-exist} is not contained in any compact subset
of $M$.\end{hypo}

\begin{remark} Theorem \ref{bddplane} by McMullen, Mohammadi and Oh (or an extension of
\cite{ratner-top}) guarantees that Hypothesis \ref{hypoth}
holds for degenerate $M$: \\Either $M$ is compact or $X$ is a closed immersed surface. 

However,
we do not need to apply Theorem \ref{bddplane} in order to prove Theorem \ref{main} below; hence the status
of a Hypothesis. \end{remark}

\noindent {\bf Notation:} For $K$ a compact manifold with boundary, we shall denote the boundary by
$bdy(K)$. Thus, for $K$ a compact core of $M$,  $\btK$ will denote the boundary of $\til K$
thought of as a manifold with boundary. We shall reserve the notation $\partial \til{K}$ to denote
the {\bf Gromov-}boundary of $\til K$, when the latter is hyperbolic.

\medskip

As usual, let $K$ be a compact core of $M$. Consider a lift $\tP$ of $P$ to $\tM$. 
For any component $K_0$ of $\tP \cap \tK$, we define the {\bf depth} $inj(K_0)$ of $K_0$ to be $sup_{x \in K_0} d(x,  \btK)$. Thus, the depth
of $K_0$ equals the radius of the largest totally geodesic hyperbolic disk that can be embedded in $K_0$. This makes sense as $K_0$ is a closed subset of the totally
geodesic plane $\tP$ in $\tM$.

\begin{lemma} \label{ndc} {\bf No deep components:} Fix an $H-$minimal $X$ in $M$ and assume that Hypothesis \ref{hypoth} holds.
Then for any compact core $K$ of $M$, there exists $R_0$ such that for any $P$ in $X$ and any component $K_0$ of $\tP \cap \tK$, 
 $inj(K_0) \leq R_0$. \end{lemma}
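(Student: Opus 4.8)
The plan is to argue by contradiction, extracting a geometric limit from a sequence of deeper and deeper components. Suppose no such $R_0$ exists: then for each $n$ there is a plane $P_n$ in $X$ and a component $K_{0,n}$ of $\tP_n \cap \tK$ with $\inj(K_{0,n}) \geq n$, i.e. $K_{0,n}$ contains a totally geodesic hyperbolic disk of radius $\geq n$ centered at some $x_n \in K_{0,n}$ with $d(x_n, \btK) \geq n$. Translating by deck transformations (equivalently, projecting to $M$ and re-lifting based at a point of $K_0 \cap K$ — note that since $\inj(K_{0,n}) \geq n$ the point $x_n$ is at distance $\geq n$ from $\btK$, hence its projection lies deep inside the compact core $K$), I would instead pass to $M$: each $P_n$ is one of the planes comprising $X$, so by $H$-minimality $\bbar{P_n} = X$, and the disk of radius $n$ in $P_n$ centered near $x_n$ projects to an immersed totally geodesic disk of radius $n$ in $M$ staying inside a fixed compact set, namely (a neighborhood of) $K$, for the first $\approx n$ units of radius.

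The key step is the geometric limit argument. Pass to the frame bundle: lift each $P_n$ to a frame $f_n$ in $FM$ lying over a point of $K$ (more precisely, over the image of $x_n$, which stays in a compact subset of $M$), and by compactness extract a convergent subsequence $f_n \to f_\infty$. Since the $P_n$ contain geodesic disks of radius $\to \infty$ about their basepoints all remaining in the fixed compact set, the limit frame $f_\infty$ spans a totally geodesic plane $P_\infty$ all of whose points are limits of points of $X$; as $X$ is closed, $P_\infty \subset X$, and moreover the entire plane $P_\infty$ — not just a disk — is contained in the compact set $N_1(K)$ (or even in $K$ itself, since the radius-$n$ disks stay within $K$ up to bounded error). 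Thus $\bbar{P_\infty}$ is a closed $H$-invariant subset of $X$ contained in a compact subset of $M$; by $H$-minimality $\bbar{P_\infty} = X$, so $X$ itself is contained in a compact subset of $M$, contradicting Hypothesis \ref{hypoth}.

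The main obstacle I anticipate is making precise the claim that a radius-$n$ totally geodesic disk in $\tP_n \cap \tK$ centered at a point of depth $\geq n$ actually projects (or can be arranged by deck transformation to lie) in a fixed compact subset of $M$ independent of $n$ — one must be careful that "depth in $\tK$" translates correctly into "distance from $bdy(\widetilde{K})$" and hence that the disk, while possibly wandering in $\tM$, descends to a bounded region of $M$; here one uses that $K$ is a compact core so $\tK$ is quasi-isometrically $\til M$ only coarsely, but the disk lies in $\tP_n \cap \tK$ by hypothesis, so it descends into $K$ directly. A secondary technical point is ensuring the geometric limit $P_\infty$ is genuinely a complete totally geodesic plane and not a proper sub-disk: this follows since the radii $n \to \infty$ force every bounded piece of $P_\infty$ to be a limit of bounded pieces of the $P_n$, using that the $P_n$, being totally geodesic, are determined by their tangent data at the basepoint. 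Once these points are pinned down, the contradiction with Hypothesis \ref{hypoth} is immediate.
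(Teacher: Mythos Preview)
Your proposal is correct and follows essentially the same approach as the paper: argue by contradiction, extract centers $x_n$ of radius-$n$ disks in $\tP_n \cap \tK$, normalize so the basepoints lie in a fixed compact set, pass to a limit plane $P_\infty$ contained in $K$, and invoke $H$-minimality to conclude $X \subset K$, contradicting Hypothesis~\ref{hypoth}. The paper does the normalization directly in $\tM$ by translating $x_n$ into a fixed fundamental domain for the $\Ga$-action on $\tK$, rather than passing through the frame bundle, but the content is the same.

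One remark: the ``main obstacle'' you anticipate is not actually an obstacle. Since the radius-$n$ disk lies in $K_0 \subset \tP_n \cap \tK$ by hypothesis, it already sits inside $\tK$ and hence projects into the compact core $K$; no further argument is needed to ensure it descends to a bounded region of $M$. Your phrase ``its projection lies deep inside the compact core $K$'' is slightly off---$K$ is compact, so the projection simply lies in $K$, full stop---but this does not affect the argument.
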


\begin{proof} Suppose not. Then for all $n \in \natls$, there exists a plane $P_n$, a lift $\tP_n$, a component $K_0$ of $\tP_n \cap \tK$,
and $z_n \in K_0$ such that a totally geodesic disk of radius $n$ about $z_n$ is contained in $K_0$. Translating by an element of $\Ga (=\pi_1(M)
= \pi_1(K))$, we may assume that $z_n$ lies inside a fundamental domain  for the $\Ga-$action on $\tK$. Passing to a limit,  as $n \to \infty$,
we obtain a totally geodesic (infinite) plane $P_\infty$ contained inside $K$. Since $X$ is $H-$minimal, it equals the closure of 
$P_\infty$ and hence $X \subset K$, contradicting Hypothesis \ref{hypoth}.
\end{proof}

\subsection{Intersections of $\tX$ with $\btK$} In this subsection, we give a topological argument to show that `spurious' intersections
of $\tX$ with $\btK$ can be removed by standard topological surgeries.\\

\noindent {\bf Removing Inessential Loops:} Consider a lift $\tP$ of $P$ to $\tM$
and assume (after perturbing $K$ slightly in $M$) that $\tP$ is transverse to $\btK$. A compact connected component $\sigma$
(necessarily homeomorphic to $S^1$)
of $\tP \cap \btK$ is called an {\bf inessential loop} if $\sigma$ is homotopically trivial in $\btK$. Equivalently,
$\sigma$ bounds a disk in the boundary $\btK$.  We may replace 
every bounded component $K_0$ of $\tP \cap  \tK$ by a subsurface $K_1$ of $\tP \cap \btK$ having the same boundary circles. 
Since there are no deep components by Lemma \ref{ndc} (under Hypothesis \ref{hypoth}), $inj(K_0) \leq R_0$ for all $K_0$.
Since $\btK$ is uniformly properly embedded in $\tM$, it follows immediately that the injectivity radius of $K_1$ is bounded 
in terms of $R_0$. Hence replacing each such $K_0$ by the corresponding $K_1$, we obtain a surface $\tP_1$ lying in a bounded neighborhood of $\tP$. 
Also they have the same boundary circle $\partial \tP_1 = \partial \tP$. Replacing $\tP$ by $\tP_1$ (not necessarily totally geodesic), and taking the
minimal closed $\Ga-$invariant set generated by $\tP_1$, we obtain a new minimal set $X_1$ in $M$ having no inessential loops in $\tK \cap \til{X_1}$. 

Note that though each $ \til{X_1}$ is no longer $H-$invariant, the collection $\{ \partial \tP_1 : P_1 \in X_1 \}$ {\it does} agree with $\bbar{\Ga \partial {\tP}}
\subset \SSS$.  Since $ \til{X_1}$ will serve only an auxiliary purpose in what follows, this is not going to cause a problem in what follows. \\

\noindent {\bf Removing Asymptotically Inessential Loops:} Let $P, \tP, K, \tK$ be as above; in particular $\tP$ is transverse to $\btK$.
A bi-infinite path $\sigma$ in $\tP \cap \btK$ is called a {\bf $C_0-$asymptotically inessential loop} if it is not properly embedded, or equivalently
in our situation, there exist
$a_n \to \infty, b_n \to - \infty$ along $\sigma$ such that $d(a_n, b_n)$ is bounded by $C_0$. 

Such a bi-infinite path $\sigma$ gives "almost closed loops" in the following sense. There exists  arcs 
$\tau_n$ (resp. $\theta_n$) of  length bounded in terms of $C_0$
in $\btK$ (resp. $\tP$) such that

\begin{enumerate}
\item $\tau_n$ (resp. $\theta_n$) along
with arbitrarily long subarcs $\sigma_n$ joining $a_n, b_n$  bound contractible loops $\alpha_n$ (resp. $\beta_n$) in $\btK$ (resp. $\tP$). The loops $\alpha_n$ (resp. $\beta_n$) bound disks $\Delta_{1n}$ (resp. $\Delta_{2n}$)
\item the loops  $\tau_n \cup \theta_n$ 
bound disks $\Delta_{3n}$ lying in a bounded neighborhood of $\btK$.
\end{enumerate}

Again by homotoping $\tP$ by a bounded amount $D_0$ (depending only on $C_0$), one can get rid of $C_0-$asymptotically inessential loops. This is done iteratively over $n$: replace $\Delta_{2n}$ by $\Delta_{1n}\cup \Delta_{3n}$
and let $n \to \infty$.\\

\noindent {\bf Two Alternatives:}
We are now in a position to describe  intersections of $\tX$ with $\btK$. We shall note below
that we can reduce $\tX$ with $\btK$ to essentially two kinds of intersections
that survive removal of inessential loops and asymptotically inessential loops:

\begin{enumerate}
\item[Alternative A:] A (closed $\pi_1(M)-$invariant) collection $\CC$ of 
 bi-infinite paths in $ \tX \cap \btK$ properly embedded in $\btK$. 
\item[Alternative B:] A (closed $\pi_1(M)-$invariant)
 collection $\DD$ of  compressing disks in $ \tX \cap \btK$  where each $D \in \DD$ has diameter bounded by some $d_0$.
\end{enumerate}

We describe now how the two above alternatives are obtained. First, by homotoping $K$ and $X$, we can remove
inessential loops. Next, for any fixed $C_0$, we can get rid of $C_0-$asymptotically inessential loops
by a similar homotopy. Two cases arise now. If all asymptotically inessential loops are 
$C_0-$asymptotically inessential for some $C_0 > 0$, then we can remove them all. Else, 
$\tX\cap\btK$ contains (Gromov-Hausdorff) 
limits of $n-$asymptotically inessential loops, as $n$ tends to infinity. Such limits are necessarily 
 bi-infinite paths in $ \tX \cap \btK$ properly embedded in $\btK$. In short, either we can remove
all asymptotically inessential loops or Alternative A holds.

Suppose therefore that  Alternative A fails to hold and also (by homotopy) inessential loops as well as 
asymptotically inessential loops have been removed. We shall show that in this situation, 
Alternative 2 holds. Since $\CC$ is empty, all intersections come from simple closed
curves in $\btK$ that are compressible in $M$, and hence in $K$. The innermost curves necessarily 
correspond to compressing disks in $\tX\cap\btK$. If there exists a sequence $D_i$ in
this collection, such that $dia(D_i) \to \infty$, then, by Lemma \ref{ndc}, they have uniformly bounded depth
and  their boundary circles necessarily limit to
paths in $\btK$ satisfying Alternative A, contradicting our assumption. Hence 
Alternative 2 holds. We summarize this discussion as follows:

\begin{prop} \label{alter} Let $M$ be a degenerate hyperbolic 3-manifold with $K$ a compact core.
Let $X$ be an $H-$minimal set in $M$ not contained in a compact subset. Then $ \tX \cap \btK$
satisfies at least one of the following alternatives:
\begin{enumerate}
\item[Alternative A:] $ \tX \cap \btK$ contains a (closed $\pi_1(M)-$invariant) collection $\CC$ of 
 bi-infinite paths in $ \tX \cap \btK$ properly embedded in $\btK$. 
\item[Alternative B:] $ \tX \cap \btK$ contains a (closed $\pi_1(M)-$invariant)
 collection $\DD$ of  compressing disks in $ \tX \cap \btK$  where each $D \in \DD$ has diameter bounded by some $d_0$.
\end{enumerate}
\end{prop}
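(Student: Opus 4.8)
The plan is to execute, in a fixed order, the three topological surgeries indicated before the statement, and to observe that the only way a surgery can fail to simplify $\tX\cap\btK$ is by forcing one of the two alternatives. The engine of the whole argument is Lemma \ref{ndc}: since $X$ is not contained in a compact set (Hypothesis \ref{hypoth}), every component $K_0$ of $\tP\cap\tK$ has $\inj(K_0)\le R_0$, so every compact or "almost compact" piece of $\tP\cap\tK$ lies in a uniformly bounded neighborhood of $\btK$. Consequently every surgery below is realized by a homotopy of $\tP$ of bounded magnitude, which is what lets us pass from $\tP$ to an auxiliary surface $\tP_1$ without changing the boundary circle $\partial\tP\subset S^2$, hence without changing $\bbar{\Ga\,\partial\tP}\subset\SSS$.

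First I would perturb $K$ so that $\tP$ is transverse to $\btK$, and remove inessential loops: each homotopically trivial $S^1$-component of $\tP\cap\btK$ bounds a disk in $\btK$, and the bounded component $K_0$ of $\tP\cap\tK$ it cuts off has $\inj(K_0)\le R_0$ by Lemma \ref{ndc}, so it may be replaced $\Ga$-equivariantly by a subsurface $K_1\subset\btK$ with the same boundary circles and injectivity radius bounded in terms of $R_0$; taking the minimal closed $\Ga$-invariant set generated by the resulting surface $\tP_1$ produces a new minimal set with no inessential loops and with $\{\partial\tP_1\}=\bbar{\Ga\,\partial\tP}$. Then, for a fixed constant $C_0$, the same device removes every $C_0$-asymptotically inessential loop: such a bi-infinite path yields "almost closed" loops bounding disks in a bounded neighborhood of $\btK$, and a bounded homotopy iterated over $n\to\infty$ eliminates them. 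Now the dichotomy appears. Either there is a \emph{uniform} $C_0$ with the property that every asymptotically inessential loop is $C_0$-asymptotically inessential, in which case all of them are removed; or no such $C_0$ exists, in which case $\tX\cap\btK$ contains Gromov--Hausdorff limits of $n$-asymptotically inessential loops as $n\to\infty$. Any such limit is a bi-infinite path in $\tX\cap\btK$ that is properly embedded in $\btK$ — if it admitted $a_n\to\infty$, $b_n\to-\infty$ along it with $d(a_n,b_n)$ bounded, then it, and nearby approximants, would have been asymptotically inessential with a uniform constant. Taking the $\Ga$-orbit closure of these limit paths gives the collection $\CC$ of Alternative A.

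Suppose therefore Alternative A fails, so we may assume all inessential and all asymptotically inessential loops have been removed and $\CC=\emptyset$. Then every component of $\tX\cap\btK$ is a compact $S^1$ (the proper bi-infinite paths are gone), and being non-inessential in $\btK$ it is compressible in $M$, hence in $K$; the innermost such circles bound compressing disks contained in $\tX\cap\btK$. If some sequence $D_i$ of these had $\diam(D_i)\to\infty$, then by Lemma \ref{ndc} the $D_i$ have uniformly bounded depth, so after translating each into a fixed fundamental domain their boundary circles Gromov--Hausdorff converge to bi-infinite paths properly embedded in $\btK$ — precisely Alternative A, contrary to hypothesis. Hence the compressing disks have uniformly bounded diameter $d_0$, and their $\Ga$-orbit closure furnishes the collection $\DD$ of Alternative B.

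The step I expect to be the main obstacle is the passage to Gromov--Hausdorff limits in the two "else" cases: one must verify that the limit of the $n$-asymptotically inessential loops (respectively, of the boundary circles of the large compressing disks) genuinely lies in $\tX\cap\btK$, is bi-infinite rather than compact, and is properly embedded in $\btK$, and that the entire sequence of surgeries can be performed $\Ga$-equivariantly so that the resulting $\CC$ or $\DD$ is closed and $\pi_1(M)$-invariant. Keeping "properly embedded" alive under the limit is the delicate point, and it is exactly where the failure of a uniform asymptotic-inessentiality constant is used.
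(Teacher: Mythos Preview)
Your proposal is correct and follows essentially the same approach as the paper: the paper's proof is the discussion immediately preceding the proposition, and you have reproduced its three-step surgery (remove inessential loops, remove $C_0$-asymptotically inessential loops, then argue the dichotomy) with Lemma~\ref{ndc} as the engine, arriving at Alternative~A via Gromov--Hausdorff limits of $n$-asymptotically inessential loops and at Alternative~B via innermost compressing disks of bounded diameter. Your identification of the limit step as the delicate point is apt; the paper handles it at the same level of detail you do.
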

As an immediate consequence, we have:

\begin{lemma} Suppose that $M$ has an incompressible core, i.e. $\bK$ is incompressible in $M$. Then,
$ \tX \cap \btK$
contains a (closed $\pi_1(M)-$invariant) collection $\CC$ of 
 bi-infinite paths in $ \tX \cap \btK$ properly embedded in $\btK$. \label{nonempty} \end{lemma}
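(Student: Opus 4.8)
The plan is to deduce Lemma \ref{nonempty} from Proposition \ref{alter} by ruling out Alternative B when $\btK$ is incompressible. Since $M$ is degenerate, $X$ is not contained in any compact subset of $M$ (Hypothesis \ref{hypoth} applies, or one invokes Theorem \ref{bddplane}), so Proposition \ref{alter} guarantees that $\tX \cap \btK$ satisfies Alternative A or Alternative B. We must show that incompressibility forces Alternative A. The key observation is that Alternative B produces, inside $\tX \cap \btK$, a compressing disk $D$ whose boundary $\partial D$ is an essential simple closed curve in $\btK$ (by construction, the curves producing compressing disks in $\btK$ are exactly the ones that survive the removal of inessential loops, hence are homotopically nontrivial in $\btK$, yet bound a disk in $\tK$, hence are null-homotopic in $\tM$). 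But $\btK$ incompressible means precisely that $\pi_1(\btK) \to \pi_1(\tM)$ (equivalently $\pi_1(S)\to \pi_1(M)$ on each boundary component $S$ of $K$) is injective, so no such essential-in-$\btK$ curve can be null-homotopic in $\tM$. This contradiction eliminates Alternative B.

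First I would recall that, by definition of incompressible core, each boundary component $S$ of $K$ is $\pi_1$-injective in $M$; passing to the universal cover $\tM$, each component of $\btK$ is simply connected is the wrong statement — rather, lifting $S$ to $\tK$ we get a copy of $\til S$ whose inclusion into $\tM$ is $\pi_1$-injective on the level of the original surface group, equivalently every essential loop in a component of $\bK$ remains essential in $M$. Then I would trace through the surgery discussion preceding Proposition \ref{alter}: after removing inessential loops and $C_0$-asymptotically inessential loops, the remaining compact components of $\tP \cap \btK$ are simple closed curves that are \emph{essential} in $\btK$. In Alternative B these essential curves bound compressing disks $D \subset \tX \cap \btK$ of bounded diameter; "compressing disk" means $\partial D$ is essential in the surface but bounds a disk on the $\tK$-side, i.e. is null-homotopic in $\tM$. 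This directly contradicts incompressibility. Hence Alternative B cannot occur, so Alternative A holds, which is exactly the assertion of the Lemma.

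The main obstacle I anticipate is bookkeeping rather than mathematical depth: one must be careful that the surgeries described before Proposition \ref{alter} genuinely leave only \emph{essential} simple closed curves (so that a surviving compressing disk really has essential boundary and thus really does contradict incompressibility), and that the collection $\CC$ produced in Alternative A is nonempty — i.e. that "Alternative A holds" is not vacuously satisfied by an empty $\CC$. For the latter point, I would argue that $X$ meets $K$ (Lemma \ref{intersect} / Corollary \ref{intersectcor}), so $\tX \cap \tK \neq \emptyset$, and since $X$ is not contained in any compact set, $\tX$ must exit every neighborhood of $\btK$ in $\tM$ along a plane that repeatedly crosses $\btK$; the intersection $\tX \cap \btK$ is therefore nonempty, and having eliminated inessential loops, asymptotically inessential loops, and (via incompressibility) compressing disks, the surviving intersection pattern consists of genuinely proper bi-infinite arcs in $\btK$. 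Thus $\CC \neq \emptyset$, completing the proof.
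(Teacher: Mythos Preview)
Your proof is correct and follows the paper's own approach: the paper presents Lemma \ref{nonempty} as an immediate consequence of Proposition \ref{alter}, with the implicit reasoning being exactly what you spell out --- incompressibility of $\bK$ precludes the compressing disks required by Alternative B, forcing Alternative A. Your additional care about nonemptiness of $\CC$ (via Lemma \ref{intersect} and the fact that $X$ is unbounded) is appropriate bookkeeping that the paper leaves to the reader.
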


We now proceed to deal with Alternative A.
Each bi-infinite path $l$ in $\CC$ lifted to $\btK$
will be called a {\bf leaf} of $\CC$.

\begin{lemma}  There exists $C$ such that for any leaf $l$  of $\CC$ lifted to $\btK$ and any $a, b \in l$,
$l \subset N_C ([a,b])$, where $N_C ([a,b])$ denotes the $C-$neighborhood of the 
 geodesic $[a,b]$ joining $a, b$ in $\btK$. 
\label{ccqgeod} \end{lemma}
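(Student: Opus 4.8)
The plan is to show that leaves of $\CC$, lifted to $\btK$, are \emph{uniformly quasiconvex} paths, and to do so by exploiting that $\CC$ is an algebraic lamination together with the structure theorem for Cannon-Thurston maps (Theorem \ref{ctstr}). First I would recall that $\btK$, with its induced path metric, is quasi-isometric to $\til S$ for the relevant boundary surface $S = \partial E$ (a hyperbolic surface group is $\delta$-hyperbolic and quasiconvex in $\tM$ exactly away from the conical limit set), so it suffices to prove the quasiconvexity statement inside $\til S$, equivalently to exhibit a uniform constant $C_0$ as in Alternative (1) of Lemma \ref{current-lamn0}. The leaves of $\CC$ are properly embedded bi-infinite paths in $\btK$; each such leaf carries a well-defined pair of endpoints in $\partial \til K$ (this is where proper embeddedness is used: a non-exiting path would have coinciding endpoints), and the assignment $l \mapsto (l(+\infty), l(-\infty))$ realizes $\CC$ as a closed, $\pi_1(M)$-invariant subset of $\partial^2 \Gamma$, i.e. an algebraic lamination in the sense of the Definition preceding Lemma \ref{current-lamn0}.

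Next I would apply Lemma \ref{current-lamn0} to this algebraic lamination $\CC$, with $W = \tM$ and the Cannon-Thurston map of Theorem \ref{ctstr}. The dichotomy says either every leaf maps to a uniform quasigeodesic in $\tM$ (Alternative (1)), or $\CC \cap \LL_{CT} \neq \emptyset$ (Alternative (2)). The content of the present lemma is precisely that Alternative (1) holds, so the heart of the argument is to rule out Alternative (2). Here is where I expect the main obstacle. A leaf of $\CC$ lies on the properly embedded surface $\btK$, hence it is a properly embedded path in $M$ after projection; but a point of $\CC \cap \LL_{CT}$ would, by Theorem \ref{ctstr}, be a pair of endpoints of a leaf of an ending lamination $\LL_E$ (or of an ideal polygon bounded by such leaves). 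The geodesic in $\tM$ joining such a pair exits the end $E$, which is compatible with a properly embedded path — so this alone is not an immediate contradiction, and one must instead use that the leaf is properly embedded \emph{in $\btK$}, hence stays at bounded distance from $bdy(\tK)$, whereas a geodesic realizing a point of $\LL_{CT}$ sinks arbitrarily deep into the degenerate end (its distance from $bdy(\tK)$ is unbounded, as the ending-lamination leaf is not asymptotic to the core). Pushing this through, and handling the ideal-polygon case by the same estimate applied to each side, gives the contradiction; I would phrase it via the "no deep components" mechanism already isolated, i.e. the boundedness of $inj$, combined with the uniform proper embedding of $bdy(\tK)$ in $\tM$.

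Once Alternative (2) is excluded, Lemma \ref{current-lamn0} yields a constant $C_0$ such that $i(l)$ is a $C_0$-quasigeodesic in $\tM$ for every leaf $l$ of $\CC$. Finally I would transfer this back to the intrinsic geometry of $\btK$: since $l$ lies in $\btK$, and $\btK$ is uniformly properly embedded in $\tM$ (so that distances measured in $\btK$ along $l$ are comparable to distances measured in $\tM$, on the subset $\btK$), a $C_0$-quasigeodesic in $\tM$ that is contained in $\btK$ is a quasigeodesic in $\btK$ with constants depending only on $C_0$ and the proper-embedding function. Since $\btK$ is a (uniformly) $\delta$-hyperbolic space (its components are closed hyperbolic surfaces), quasigeodesics lie uniformly close to geodesics with the same endpoints by the Morse stability lemma; this gives the desired $C$ with $l \subset N_C([a,b])$ for all $a,b \in l$. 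The one place to be careful is that $\btK$ may have several components of different topological type, but there are finitely many, so taking the worst constant over them suffices.
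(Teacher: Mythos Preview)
Your proposal has a circularity at its core. To apply Lemma~\ref{current-lamn0} you need $\CC$ realized as an algebraic lamination in $\partial^2\Gamma$, which in turn requires that each leaf $l\subset\btK$ have a well-defined pair of distinct endpoints in $\partial\tK$. You assert that proper embeddedness furnishes this, but it does not: a properly embedded bi-infinite path in $\hypp$ (the intrinsic geometry of a component of $\btK$) can oscillate and fail to converge to any single boundary point. Showing that leaves of $\CC$ are quasigeodesics in $\btK$---and hence have endpoints---is precisely the content of Lemma~\ref{ccqgeod}. In the paper's logical order one first proves Lemma~\ref{ccqgeod}, then identifies $\CC$ with an algebraic lamination (Remark~\ref{postcor}), and only afterward invokes the dichotomy (Lemma~\ref{current-lamn}) and rules out the ending-lamination branch (Lemma~\ref{qgeod}); your plan runs this backward. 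There is a related gap in the final transfer step: Alternative~(1) of Lemma~\ref{current-lamn0} asserts that the \emph{$\Gamma$-geodesic} joining the endpoints is a $C_0$-quasigeodesic in $\tM$, not that the original curve $l\subset\tP\cap\btK$ is. Even granting the endpoints, you would still owe a comparison between $l$ and that $\Gamma$-geodesic, and a path in $\tP\cong\hypp$ with prescribed distinct ideal endpoints need not be a quasigeodesic.

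The paper's proof is direct and exploits that leaves lie on totally geodesic planes. If the conclusion fails, one extracts leaves $l_n\subset\tP_n\cap\tS$ and points $a_n,b_n\in l_n$, $c_n\in[a_n,b_n]$ with $l_n$ avoiding $N_n(c_n)$; after translating $c_n$ into a fixed fundamental domain, the subarcs $\alpha_n$ of $l_n$ joining $a_n,b_n$ lie outside ever-larger balls in $\tS$. Passing to a Hausdorff limit of the closed round disks $\tP_n\cup\partial\tP_n$ in $B^3$ yields a limiting disk $P_\infty$. By Lemma~\ref{cantor}, $(\partial i)^{-1}(\partial P_\infty)\subset\partial\tS$ is a Cantor set, hence totally disconnected; but $[a_n,b_n]\to(a_\infty,b_\infty)$ in $\tS$ while the escaping $\alpha_n$ force an entire arc of $\partial\tS$ joining $a_\infty$ to $b_\infty$ into $(\partial i)^{-1}(\partial P_\infty)$---a contradiction. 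Lemma~\ref{cantor} is the key geometric input, and your route never invokes it.
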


\begin{proof} 
Suppose  not. Then for every positive integer $n$, there exist leaves $l_n$ and
 $a_n, b_n \in l_n$, $c_n \in [a_n, b_n]$
(the geodesic in $\tK$ joining $a_n, b_n$) such that $l \cap N_n (c_n) = \emptyset$, where $N_n(c_n)$
denotes the $n-$ball around $c_n$ in $\tK$. 
Since $l_n$ is a (necessarily connected) path in $\btK$, we may assume that there exists a component
$\tS$ of $\btK$, such that $[a_n, b_n]$ tracks a quasigeodesic in $\tS$. Without loss of generality
therefore, we assume (after passing to a subsequence if necessary)
that $[a_n, b_n]$ is contained in $\tS$ for all $n$. 

Translating by a group element $g_n$ (in
$\pi_1(K)$) if necessary, we may assume that that $c_n$ lies in a fixed fundamental domain in $\tK$. Passing 
to a subsequence we have a sequence of paths $\alpha_n$ in $\tS$
 joining $a_n, b_n$ and lying outside an
$(n-D_0)-$ball about a fixed base point, where $D_0$ is the diameter of a fundamental domain in $\tK$.
Let $P_n \subset \tM$ be the sequence of totally geodesic planes containing $\alpha_n$. Passing to a subsequence
we may assume that the sequence $\{ P_n \cup \partial (P_n)\}$ 
converges in the Hausdorff topology on compact subsets of
$\til{M} \cup \partial \til{M} = B^3$. Since each $\{ P_n \cup \partial (P_n)\}$ is a round 
disk (say in the projective model), the limit $P_\infty \cup \partial P_\infty$ is necessarily
 a round disk passing through a fixed fundamental domain in $\tK$.

In particular, the preimage under the Cannon-Thurston map $\partial i$
of  $\partial P_\infty$ is  a Cantor set in the boundary $\partial \tS$ by Lemma \ref{cantor}.
On the other hand $[a_n, b_n]$ converges to a bi-infinite geodesic $(a_\infty, b_\infty)$
and hence $\alpha_n$ converges to an arc in $\partial \tS$ joining $a_\infty, b_\infty$
in $\partial \til{S}$.
This contradiction yields the Lemma.
\end{proof} 

As an immediate Corollary we have

\begin{cor}  There exists $C$ such that  any leaf $l$  of $\CC$ in $\tK$ is a $C-$quasigeodesic
in $\tK$. 
\label{ccqgeodcor} \end{cor}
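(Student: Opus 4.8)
The plan is to deduce this immediately from Lemma \ref{ccqgeod} by the standard local-to-global fact that a path which stays within a bounded neighbourhood of each of its chords is a quasigeodesic. Concretely, fix the constant $C$ produced by Lemma \ref{ccqgeod}, so that for any leaf $l$ of $\CC$ lifted to $\btK$ and any $a,b\in l$ we have $l\subset N_C([a,b])$, where $[a,b]$ is the geodesic in $\tK$ joining $a,b$. I want to upgrade "$l$ lies in a $C$-neighbourhood of each chord" to "$l$ is a $C'$-quasigeodesic in $\tK$" for a uniform $C'=C'(C,\delta)$, where $\delta$ is the hyperbolicity constant of $\tK$ (which is hyperbolic since $M$ is degenerate, being quasi-isometric to $\partial\tK\cong\pi_1(K)$ a surface-by-nothing or, more to the point, $\widetilde K\simeq\widetilde M=\Hyp^3$ coarsely via the Cannon--Thurston setup; in any case $\tK$ is a Gromov-hyperbolic space).

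First I would record that each leaf $l$ of $\CC$ is, by construction in Proposition \ref{alter} / Lemma \ref{nonempty}, a bi-infinite path properly embedded in $\btK$; reparametrising by arclength (the leaf is a tame $1$-submanifold of the smooth surface $\btK$, hence rectifiable) we may regard $l$ as a path $\gamma:\R\to\tK$ parametrised proportionally to arclength. The hypothesis $l\subset N_C([a,b])$ says precisely that for all $s,t$, every point of $\gamma([s,t])$ lies within $C$ of the geodesic $[\gamma(s),\gamma(t)]$; equivalently, the Hausdorff distance between $\gamma([s,t])$ and $[\gamma(s),\gamma(t)]$ is at most $C$ (the reverse inclusion, that the geodesic lies near the path, is automatic in a $\delta$-hyperbolic space once the path is connected and contains the endpoints, up to an additive $\delta$). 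Then the standard criterion — a connected path whose sub-paths are each Hausdorff-close to the corresponding geodesic, combined with the path being a proper embedding so it cannot backtrack arbitrarily far — yields that $\gamma$ is a $(\lambda,\epsilon)$-quasigeodesic with $\lambda,\epsilon$ depending only on $C$ and $\delta$. Absorbing $(\lambda,\epsilon)$ into a single constant $C$ (harmlessly reusing the letter, as the statement does) gives the Corollary.

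The only point requiring a word of care — and the place where the "proper embedding" hypothesis is genuinely used — is ruling out that $l$ doubles back on itself: a path can lie within $C$ of each of its chords yet fail to be a quasigeodesic if it retraces a long segment. But a properly embedded bi-infinite path in $\btK$ cannot return to a bounded region infinitely often, and the bounded-neighbourhood condition forces any would-be backtrack of length $>2C$ to create a point of $[\gamma(s),\gamma(t)]$ far from $\gamma$, contradicting $\delta$-thinness of the triangle with vertices $\gamma(s),\gamma(t)$ and the backtrack tip. So the estimate on the "speed" of $\gamma$ — namely $\length(\gamma|_{[s,t]})\le \lambda\, d(\gamma(s),\gamma(t))+\epsilon$ — follows, completing the argument. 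I expect this backtracking exclusion to be the only non-formal step; everything else is the routine thin-triangles computation, which I will not grind through here.
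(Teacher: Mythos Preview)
The paper records this as an immediate consequence of Lemma \ref{ccqgeod} and supplies no further argument, so your overall plan --- deduce quasigeodesicity from the chord-neighbourhood condition together with hyperbolicity of $\tK$ --- is exactly the intended route.

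One point genuinely needs correction. Your justification for $\tK$ being Gromov-hyperbolic, namely ``$\widetilde K\simeq\widetilde M=\Hyp^3$ coarsely via the Cannon--Thurston setup'', is precisely backwards: if the inclusion $\tK\hookrightarrow\tM=\Hyp^3$ were a quasi-isometry, the boundary map $\partial i:\partial\Gamma\to S^2$ would be a homeomorphism, contradicting Theorem \ref{ctstr}, which says $\partial i$ has nontrivial point-preimages coming from the ending laminations. The entire Cannon--Thurston theory concerns inclusions that are \emph{not} quasi-isometric embeddings. The correct reason $\tK$ is hyperbolic is that $\Gamma=\pi_1(K)$ is word-hyperbolic (a finitely generated Kleinian group without parabolics has atoroidal compact core), and $\tK$ is quasi-isometric to a Cayley graph of $\Gamma$; indeed the paper uses $\partial\Gamma$ and $\partial\tK$ interchangeably throughout.

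A smaller point: your backtracking step --- ruling out arclength blow-up via a thin-triangle argument with the ``backtrack tip'' as third vertex --- does not work as written. A path can zig-zag inside the $C$-tube of a geodesic without forcing any point of the chord far from the path, so thinness of that particular triangle gives no contradiction. This is not fatal, however: for the downstream use (Remark \ref{postcor}) one only needs that each leaf determines a pair of distinct points of $\partial\Gamma$, i.e.\ is an \emph{unparametrised} quasigeodesic. For that the Hausdorff-closeness of subarcs to their chords, which you do correctly extract, already suffices: sending the endpoints to $\pm\infty$ along the properly embedded leaf, the chords all pass through a fixed bounded set and limit to a bi-infinite geodesic to which the leaf is Hausdorff-close.
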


\begin{remark} \label{postcor} {\rm
We shall say that a leaf $l$ is {\bf carried by} a component
$\tS (\subset \btK)$ if it has a quasigeodesic representative contained in $\tS$. In this situation
we shall identify $l$ with such a quasigeodesic representative.
By Corollary
\ref{ccqgeodcor}, we may assume without loss of generality that $\CC$ is an algebraic lamination, i.e. a closed
$\pi_1(K)-$invariant collection of bi-infinite geodesics in $\btK$, or equivalently, a 
closed
$\pi_1(K)-$invariant subset of $\partial^2 \Ga$. Further, the proof of Lemma \ref{ccqgeod} shows that
there exists a component $S$ of $\bK$, such that its preimage in $\tK$ carries a closed $\pi_1(K)-$invariant
subset of $\CC$ (when Alternative A holds).}
\end{remark}

\subsection{Quasigeodesics in $\til{M}$} We have already shown that 
leaves of $\CC$ are uniform quasigeodesics in $\til K$. 
In this subsection we show that, moreover, 
leaves of $\CC$ are uniform quasigeodesics in $\til M$ (when Alternative A holds). 

\begin{lemma} Let $M$ be a degenerate hyperbolic manifold and let $S$ be a boundary component
of a compact core $K$ of $M$. Let $\CC_0$ be an algebraic lamination in $\partial^2 \Gamma$, whose leaves
are carried by $\tS$. Then exactly 
one of the following holds:

\begin{enumerate}
\item Either there exists $C_0$ such that every leaf of $\CC_0$ is a $C_0-$quasigeodesic in $\tM$.
\item Or there exists an end $E$ such that $\LL_E \subset \CC_0$, where $\LL_E$ is the ending lamination
for an end $E$ with $\partial E= S$.
\end{enumerate}
\label{current-lamn}
\end{lemma}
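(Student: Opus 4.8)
The statement is an instance of the general dichotomy in Lemma \ref{current-lamn0}, applied to the pair $(\Ga, W) = (\pi_1(K), \tM)$ (equivalently $(\Ga,\tM)$) and to the algebraic lamination $\LL = \CC_0$. Indeed, by Theorem \ref{ctstr} a Cannon-Thurston map $\partial i : \partial \tS \to \partial \tM$ exists (or, what is the same thing after identifying $\partial \tS$ with the relevant subsurface-boundary inside $\partial \Ga$, the Cannon-Thurston map $\partial i : \partial \Ga \to \partial\tM = S^2$), and its associated Cannon-Thurston lamination $\LL_{CT}$ is precisely the set of pairs $(a,b) \in \partial^2\Ga$ that are end-points of a leaf of some ending lamination $\LL_E$, or boundary points of an ideal polygon with sides leaves of $\LL_E$. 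So the plan is: invoke Lemma \ref{current-lamn0} to get that either (1) every leaf of $\CC_0$ is a uniform quasigeodesic in $\tM$, or (2) $\CC_0 \cap \LL_{CT} \neq \emptyset$ and in fact $\CC_0$ contains a minimal closed $\Ga$-invariant subset of $\LL_{CT}$; and then, in case (2), promote "$\CC_0$ meets $\LL_{CT}$" to "$\CC_0$ contains an entire ending lamination $\LL_E$ with $\partial E = S$".

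\textbf{Key steps, in order.} First I would set up the Cannon-Thurston map and check that $\CC_0$, whose leaves are carried by $\tS$, is literally an algebraic lamination in $\partial^2\Ga$ in the sense required — this is exactly what Remark \ref{postcor} arranged via Corollary \ref{ccqgeodcor} (leaves of $\CC$, hence of $\CC_0$, are uniform quasigeodesics in $\tK$, so have well-defined distinct endpoints in $\partial\Ga$, and the collection is closed and $\pi_1(K)$-invariant). Second, apply Lemma \ref{current-lamn0} verbatim to land in one of its two alternatives; alternative (1) is exactly conclusion (1) of the lemma, so there is nothing more to do there. Third, in alternative (2), I have a nonempty $\Ga$-invariant closed subset $\LL' \subseteq \CC_0 \cap \LL_{CT}$. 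By Theorem \ref{ctstr}, every leaf of $\LL'$ has its pair of endpoints identified by $\partial i$, hence — by the description of the fibers of $\partial i$ — its endpoints are a pair of endpoints of a leaf of an ending lamination $\LL_E$, or a pair of "diagonal" endpoints of an ideal polygon bounded by leaves of $\LL_E$, for some end $E$ of $M$. Fourth, I need to identify which end $E$ this is and upgrade from "contains one such pair/leaf" to "contains all of $\LL_E$": since the leaves of $\CC_0$ are carried by $\tS$, the end $E$ in question must be an end with $\partial E = S$ (a leaf carried by $\tS$ is a limit of arcs in $\tS$, so its endpoints are limits of endpoints of such arcs, forcing the exiting direction to be through the end abutting $S$); and since $\LL_E$ is the \emph{support} of the ending lamination, it is the unique minimal closed $\Ga$-invariant sublamination of $\LL_{CT}$ "living at $E$", and any closed $\Ga$-invariant set containing a single leaf with endpoints on $\LL_E$ (or a diagonal of an $\LL_E$-polygon) must contain all of $\LL_E$ by minimality of the geodesic lamination $\LL_E$ and density of any leaf of a minimal lamination. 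That gives conclusion (2): $\LL_E \subseteq \CC_0$ with $\partial E = S$.

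\textbf{Main obstacle.} The bookkeeping steps — invoking Lemma \ref{current-lamn0} and reading off the fiber structure of $\partial i$ from Theorem \ref{ctstr} — are essentially formal. The genuine content, and where I expect to spend the most care, is the fourth step: pinning down that the end $E$ produced in alternative (2) is one whose boundary surface is exactly $S$ (rather than some other end of $M$), and that the single leaf of $\LL_{CT}$ caught inside $\CC_0$ drags the entire ending lamination $\LL_E$ in with it. The first half is a limiting argument about where leaves carried by $\tS$ can exit, and one must be careful that "carried by $\tS$" (a quasigeodesic representative inside the single component $\tS$) genuinely constrains the endpoints in $\partial\Ga$ and hence their $\partial i$-images; the second half uses the standard fact that a minimal geodesic lamination has all its leaves dense, so a closed invariant set meeting it in a leaf contains its closure, which is all of $\LL_E$ — but one must handle the ideal-polygon case, where the captured leaf of $\CC_0$ is a diagonal of a complementary polygon of $\LL_E$ rather than a leaf of $\LL_E$ itself, and argue that the $\Ga$-orbit closure of such a diagonal still contains the boundary leaves of that polygon and hence all of $\LL_E$.
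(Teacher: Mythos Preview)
Your proposal is correct and follows essentially the same route as the paper: invoke Lemma~\ref{current-lamn0} to get either uniform quasigeodesics or $\CC_0 \cap \LL_{CT} \neq \emptyset$, then use the fiber description of the Cannon--Thurston map (Theorem~\ref{ctstr}) together with minimality of $\LL_E$ to upgrade a single captured leaf (or polygon diagonal) to the full ending lamination. The paper's own proof is terser on exactly the points you flag as the main obstacle --- it simply asserts that the smallest closed $\pi_1(S)$-invariant subset of $(S^1 \times S^1 \setminus \Delta)/\sim$ containing such an $l_\infty$ is all of $\LL_E$ --- so your more explicit treatment of the diagonal case and of why $\partial E = S$ is, if anything, a welcome expansion rather than a deviation.
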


\begin{proof} 
Suppose that alternative (1) does not hold. Then, by Lemma \ref{current-lamn0}, $\CC_0 \cap \LL_{CT} \neq \emptyset$,
where $\LL_{CT}$ denotes the Cannon-Thurston lamination for the action of $\Ga$ on $\hyps$.

By Theorem \ref{ctstr}, there exists a boundary component $S$ of the compact core $K$ such that
 $l_\infty$ is either a leaf of an ending lamination $\LL_E$
for some end $E$ with boundary $S$ 
or a diagonal of a complementary
ideal polygon. Since the smallest closed $\pi_1(S)-$invariant subset of $(S^1 \times S^1 \setminus \Delta)/\sim$
containing such an $l_\infty$ is all of $\LL_E$, it follows that alternative (2) holds.
\end{proof}

\begin{lemma} Let $\CC_0$ be as in Lemma \ref{nonempty}. Then there exists $C_0$ such that every leaf of $\CC_0$ is a $C-$quasigeodesic in $\tM$. \label{qgeod} \end{lemma}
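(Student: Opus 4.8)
The plan is to deduce this immediately from Lemma~\ref{current-lamn}. Recall that by Remark~\ref{postcor} we may take $\CC_0$ to be an algebraic lamination carried by the preimage in $\tK$ of a single boundary component $S$ of $\bK$, and by Corollary~\ref{ccqgeodcor} every leaf of $\CC_0$ is a uniform quasigeodesic in $\tK$. Applying Lemma~\ref{current-lamn} to $\CC_0$, there are two possibilities: alternative~(1) holds, i.e. all leaves of $\CC_0$ are uniform quasigeodesics in $\tM$ --- which is exactly the assertion of the Lemma --- or alternative~(2) holds, producing an end $E$ with $\partial E = S$ and $\LL_E\subseteq\CC_0$. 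So the entire content of the Lemma is to rule out $\LL_E\subseteq\CC_0$, and this is where the ``geometric limit argument'' enters.

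Suppose then that $\LL_E\subseteq\CC_0$, aiming for a contradiction. Since the leaves of $\CC_0$ lie in $\tX\cap\btK$ and $\tX$ is the union of the lifts of the totally geodesic planes comprising $X$, each leaf $l$ of $\LL_E$ is carried by $\tS$ and is contained in a lift $\tP$ of some plane $P\in X$. By Lemma~\ref{cantor} the endpoints $l^\pm\in\partial\tS$ lie in the Cantor set $(\partial i)^{-1}(\partial\tP)$, and by Theorem~\ref{ctstr} they are identified by $\partial i$; hence, by continuity of the Cannon--Thurston extension $\hat i\colon \overline{\tS}\to B^3$, the properly embedded line $l\subseteq\tP\subseteq\tM$ limits at both of its ends to the single ideal point $\xi=\partial i(l^\pm)\in\partial\tP$. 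Now invoke recurrence: $\LL_E$ is minimal and fills $S$, so a generic leaf $l$ projects to a dense geodesic in the \emph{compact} surface $S$. Choose $h_n\in\pi_1(S)\leq\Ga$ translating longer and longer subarcs of $l$ into a fixed fundamental domain of $\tS$, so that $h_n l\to l_\infty$ for a leaf $l_\infty$ of $\LL_E$ (using that $\LL_E$ is closed and $\pi_1(S)$--invariant) and, after passing to a subsequence, $h_n\tP\to\tP_\infty$ in the Hausdorff topology on closed subsets of $B^3$. Because the round disks $h_n\tP$ all meet a fixed fundamental domain, $\tP_\infty$ is again a genuine totally geodesic plane (its boundary circle does not collapse to a point), and since $X$ is closed with $\{\partial\tP_1:P_1\in X\}=\overline{\Ga\,\partial\tP}\subseteq\SSS$, the plane $\tP_\infty$ is a lift of some $P_\infty\in X$, with $l_\infty\subseteq\tP_\infty\cap\tS$.

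The contradiction then comes by pushing this construction: for every leaf of $\LL_E$ passing through the fixed fundamental domain we obtain a totally geodesic plane of $X$ carrying it, and iterating the recurrence forces $S\subseteq\overline{P}\subseteq X$. (Equivalently, one notes that $\tP\cap\tS$ has only countably many components, so any single totally geodesic plane carries only countably many leaves of $\LL_E$, whereas $\LL_E$ has uncountably many leaves and, unless $X=M$, the set $X$ contains only countably many distinct totally geodesic planes.) But we are free to choose the compact core $K$ so that $S=\partial K$ is smooth, nowhere totally geodesic, and transverse to the planes of $X$; then $S$ cannot be contained in a union of totally geodesic planes having empty interior, while $H$--minimality of $X$ forces $\mathrm{int}(X)=\emptyset$ as soon as $X\ne M$. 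Since $X=M$ makes Theorem~\ref{int} hold outright, we conclude $\LL_E\not\subseteq\CC_0$, so Lemma~\ref{current-lamn} gives alternative~(1), i.e. the uniform quasigeodesic constant asserted. The main obstacle is precisely the geometric-limit step of the previous paragraph: guaranteeing that the carrying planes neither degenerate to a point nor escape from $X$ when the limit is taken, and then converting the resulting configuration into a genuine contradiction with the choice of $S$ (or with Hypothesis~\ref{hypoth}); this is where degeneracy of the ends and the fine structure of the Cannon--Thurston map (Theorem~\ref{ctstr} and Lemma~\ref{cantor}) are used in an essential way.
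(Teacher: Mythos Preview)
Your reduction via Lemma~\ref{current-lamn} is exactly right, and the paper does the same: the whole point is to rule out $\LL_E\subseteq\CC_0$. But your geometric-limit argument to derive a contradiction has two genuine gaps.

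First, the countability claim is simply false. If an $H$-minimal $X$ is not a single closed surface, then its lift to $\SSS$ is a perfect closed $\Ga$-invariant set, hence uncountable; since $\Ga$ is countable, $X$ contains uncountably many distinct planes. So ``unless $X=M$, the set $X$ contains only countably many distinct totally geodesic planes'' is unjustified and in fact wrong, and the cardinality argument collapses.

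Second, ``iterating the recurrence forces $S\subseteq\overline{P}\subseteq X$'' does not follow. Minimality of $\LL_E$ says a leaf is dense in the lamination $\LL_E$, not in $S$; the ending lamination is a closed, nowhere-dense, measure-zero subset of $S$. So even granting that every leaf of $\LL_E$ lies in $X$, you only get $\LL_E\subseteq X\cap S$, and the subsequent argument about choosing $S$ nowhere totally geodesic has nothing to bite on.

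The paper's route is genuinely different and worth absorbing. Rather than translating by elements of $\Ga$ and trying to fill up $S$, it looks at a complementary ideal polygon $\Theta$ of $\LL_E$ with $k$ sides $\gamma_1,\dots,\gamma_k$, each lying on a plane $P_i\in X$. By Theorem~\ref{ctstr} all the ideal endpoints map under $\partial i$ to a single point $z\in\partial\tM$. Now one translates by elements $g_n\in G=\pslc$ (\emph{not} in $\Ga$) along the ray $[x_1,z)$, pulling points back to a fixed basepoint. In the limit one obtains $k$ genuine round circles $\partial P_{i,\infty}$ on $S^2$. On the other hand, the Cannon--Thurston structure forces the limit of the relevant convex hulls to have boundary homeomorphic to the suspension of $k$ points (the one-point compactification of the $k$-pronged singularity at $\Theta$). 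For $k\geq 3$ this is a branched graph, not a union of round circles, and that topological mismatch is the contradiction. The key idea you are missing is to exploit the polygon structure of the complement of $\LL_E$ together with a $G$-rescaling (rather than a $\Ga$-translation) to compare two incompatible limiting pictures.
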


\begin{proof}
Suppose not. Then, by Lemma \ref{current-lamn}, there
exists an end $E$ such that $\LL_E \subset \CC_0$, where $\LL_E$ is the ending lamination
for an end $E$ with $\partial E= S$.

Let $\Theta$ be a complementary ideal polygon of $\LL_E$ in $\tS$. Let $\gamma_i: i =1,2, \cdots, k$ be the
 (infinite) sides of $\Theta$. Note that by Theorem \ref{ctstr}, all the points 
$\partial{i}(\gamma_i (\pm \infty))$ are the same. Let $z$ denote this point on $\partial \til{M}$.
Let $P_i$ be the totally geodesic plane on which $\gamma_i$ lies. There exist $x_i \in \gamma_i$
such that the diameter of the set $\{ x_1, \cdots , x_k \}$ is bounded by $k \delta$ (where $\delta$
is the hyperbolicity constant of $\tS$). Hence the geodesic rays $[x_i, z)$ lying on $P_i$ and asymptotic to $z$
all lie in $k \delta$ neighborhoods of each other. Next for any two successive sides
$\gamma_i, \gamma_{i+1}$ there are infinite rays contained in each (say the forward directed ray $\gamma_{i,+}$ in $\gamma_i$
and the backward directed ray $\gamma_{i+1,-}$ in $\gamma_{i+1}$) which lie at bounded distance from each other. Hence 
the convex hull $H_{i,+}$ of $[x_i, z) \cup \gamma_{i,+}$ (which lies in $P_i$) and 
the convex hull $H_{i+1,-}$ of $[x_{i+1}, z) \cup \gamma_{i+1,-}$
 (which lies in $P_{i+1}$) also lie at bounded distance
from each other. 

Translate
all the $H_{i,+}, H_{i,-}$'s by a sequence of elements $g_n$
 of $G (= PSL_2 (\C))$ that translate along $[x_1, z)$ by pulling
 a point $p_n \in [x_1, z)$ with $d(p_n, x_1) = n$ back to $x_1$.
Then, since $H_{i,+} \cup  H_{i,-} \subset P_i$,
 we obtain in the limit a family of totally geodesic planes $P_{i, \infty}$, one for each $i$.
Hence we have, in the limit, $k$ round circles $\partial P_{i, \infty}$ on $\partial \til{M}$. (Note here that the elements
$g_n$ do not lie in $\Ga$, but rather in $G=\pslc$.)


On the other hand, by the structure of Cannon-Thurston maps (Theorem \ref{ctstr}), the limit 
of $\cup_i (H_{i,+} \cup H_{i,-})$ has as its boundary  the one point compactification of
the k-pronged singularity corresponding to $\Theta$, or equivalently a space $\bbar \Theta$ homeomorphic to
the suspension of $k$ points. Since $\bbar \Theta$ cannot be homeomorphic to the union of $k$ 
round circles, we have a contradiction. 
\end{proof}

\begin{remark} \label{nonmin} We note here that the proof of Lemma \ref{qgeod} above {\bf does not} use 
 minimality of $X$. We first give the analogous statement
when $X$ is an $H-$invariant $X$ generated by a single plane $P$, 
i.e. $\tX = \bbar{\Ga \tP}$. (This is adequate for the removal of inessential and asymptotically inessential loops.)
We  use the $X$ thus obtained to define $\CC$ as in Lemma \ref{nonempty}.
The proof of Lemma \ref{qgeod} now goes through to establish that  leaves of $\CC$ are uniform
quasigeodesics in $\tM$. 

Reverse engineering the argument now, suppose that a leaf $l$ of the ending lamination $\LL_E$ is in $\CC$
for some $H-$invariant $X_0$ properly contained in $M$. Then 
$l \subset \tP \cap \btK$ for some $P \in X_0$. Let $X$ be the $H-$invariant set generated by  $P$. Applying the argument in the previous
paragraph shows that leaves of $\CC$ are uniform
quasigeodesics in $\tM$. In particular, no leaf of $\LL_E$ is in $\CC$ - a contradiction. We thus conclude that
the conclusions of Lemma \ref{qgeod} remain valid for any closed $H-$invariant $X$ properly contained in $M$.
 \end{remark}

\section{The Main Theorem}\label{maint}
We are now in a position to define $U-$minimal sets contained in the $H-$minimal set $X$.
Let $\CC_0$ be the algebraic lamination furnished by Lemma \ref{ccqgeod}. Lemma \ref{qgeod} then guarantees
that leaves of $\CC_0$ are uniform quasigeodesics in $\tM$.
Remark \ref{postcor} then guarantees that a sublamination $\CC$ of $\CC_0$ is carried by
a boundary component $S \subset \bK$.  We replace each  leaf $l'$ of $\CC$ by the
unique bi-infinite
geodesic $l$ in $\tM$ that tracks it.
Let $Y_0$ be the set of all (bi-infinite) horocycles $\sigma$ given by the following:\\
For each triple $\{(P, l, p): P \in X, l \in \CC, p \in l\}$ there are precisely two horocycles $\sigma_\pm 
(P,l,p)$ contained in $\tP$
passing through $p$ and converging at infinity  to $l(\pm \infty)$. 
The union of all such horocycles is denoted by $Y$. 

By Proposition \ref{geod-horoc}, none of these
horocycles are properly embedded in $M$.
Since the topology on horocycles consists of the topology of circles in $B^3$ tangential to $S^2$ minus singletons,
$Y_0$ is closed (since $\CC$ is closed as is $X$) and equivariant under $\pi_1(S)$.

The same argument as in Proposition \ref{h-exist} now gives: 
\begin{prop}  There exists a minimal closed $U-$invariant subset $Y \subset Y_0$. Further, for all $y \in Y$,
$Y=\bbar{Uy}$.
\label{u-exist} \end{prop}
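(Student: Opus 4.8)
The statement is a verbatim reprise of the proof of Proposition \ref{h-exist}, with $H$ replaced by $U$ and the key compactness input (Lemma \ref{intersect}, ``every totally geodesic plane meets $K$'') replaced by a similar statement for the horocycles in $Y_0$. So the plan is: first establish that every horocycle $\sigma \in Y_0$ meets a fixed compact set, then run the Zorn's lemma argument verbatim, then deduce the ``$Y = \overline{Uy}$'' clause from minimality.

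\emph{Step 1: a common compact set.} By construction each $\sigma \in Y_0$ is one of the two horocycles $\sigma_\pm(P,l,p)$ lying in a plane $\tP$ (projecting to some $P \in X$), passing through $p \in l$, where $l$ is the geodesic in $\tM$ tracking a leaf $l'$ of $\CC$, and $l'$ is carried by the boundary component $\tS$. By Corollary \ref{ccqgeodcor} the leaves of $\CC$ are uniform quasigeodesics in $\tK$; in particular each leaf $l$ passes through $\tK$, so some point $p$ on it lies in $\tK$, and projecting down we may take $p \in K$ with $\sigma_\pm(P,l,p)$ passing through $p$. More precisely: one shows that for each $\sigma \in Y_0$ there is a $U$-translate $u\sigma$ (i.e.\ a different parametrization/point of the same $U$-orbit in $G/U$, which corresponds to sliding along the horocycle) meeting $K$ — indeed, since $Y_0$ is by definition the union over \emph{all} $p \in l$ of the horocycles through $p$, the $U$-orbit of $\sigma_\pm(P,l,p)$ contains $\sigma_\pm(P,l,p')$ for every $p'\in l$, and we may choose $p'$ in the quasigeodesic representative of $l$ inside $\tK$, hence $p'$ projecting into $K$. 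Thus every $U$-orbit in $Y_0$ meets (the unit-tangent-bundle lift of) $K$. Equivalently, if $\sigma \in Y_0$ then $\overline{U\sigma}$ meets $K$, and more to the point any closed $U$-invariant $Y' \subset Y_0$ meets $K$.

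\emph{Step 2: Zorn's lemma.} Now copy the proof of Proposition \ref{h-exist} word for word. Consider the nonempty family of closed $U$-invariant subsets $Y_\alpha \subset Y_0$, partially ordered by $Y_\alpha \le Y_\beta$ iff $Y_\alpha \cap K \subset Y_\beta \cap K$ (intersection taken in the appropriate bundle over $K$, which is compact). For a totally ordered chain $\{Y_\alpha\}$, the sets $Y_\alpha \cap K$ form a nested family of nonempty closed subsets of a compact space, hence $\bigcap_\alpha (Y_\alpha \cap K) \ne \emptyset$, which forces $\bigcap_\alpha Y_\alpha \ne \emptyset$; being an intersection of closed $U$-invariant sets it is closed and $U$-invariant, and it lies in $Y_0$. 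So every chain has a lower bound, and Zorn's lemma produces a minimal element $Y \subset Y_0$.

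\emph{Step 3: the orbit-closure clause.} If $Y$ is a minimal closed $U$-invariant subset of $Y_0$ and $y \in Y$, then $\overline{Uy}$ is a nonempty closed $U$-invariant subset of $Y$, so by minimality $\overline{Uy} = Y$. This is the standard equivalence between minimality and density of every orbit.

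\emph{Expected main obstacle.} The Zorn argument is entirely routine; the only real content is Step 1, making precise that every $U$-orbit in $Y_0$ (not merely some special ones) meets the fixed compact $K$. This uses the definition of $Y_0$ as a union over \emph{all} $p \in l$ together with the fact — Corollary \ref{ccqgeodcor} plus Remark \ref{postcor} — that each leaf $l$ has a quasigeodesic representative inside $\tS \subset \btK$, so that the $U$-translates sliding the basepoint along $l$ do pass through $K$. One must also be a little careful that ``$Y_0$ is closed and $U$-invariant'' (already asserted in the text preceding the proposition, from closedness of $\CC$ and of $X$) is genuinely available, since the whole Zorn scheme runs inside $Y_0$.
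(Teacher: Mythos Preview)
Your overall scheme --- find a fixed compact $Q$ met by every horocycle in $Y_0$, then run Zorn exactly as in Proposition~\ref{h-exist}, then deduce $Y=\bbar{Uy}$ from minimality --- is precisely the paper's approach, and Steps 2 and 3 are fine. You also correctly identify Step 1 as the only place with content.

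Step 1, however, contains a genuine error. You claim that ``the $U$-orbit of $\sigma_\pm(P,l,p)$ contains $\sigma_\pm(P,l,p')$ for every $p'\in l$''. This is false. Acting by $U$ slides a frame \emph{along its horocycle}, whereas for distinct $p,p'\in l$ the horocycles $\sigma_+(P,l,p)$ and $\sigma_+(P,l,p')$ are \emph{concentric} horocycles in $\tP$ based at the common point $l(+\infty)$; they are related by an element of $A$ (the geodesic flow), not of $U$. Indeed a horocycle based at $l(+\infty)$ meets the geodesic $l$ exactly once, so sliding along $\sigma_+(P,l,p)$ never reaches any other point of $l$.

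The fix is simpler than what you attempted, and is what the paper does. Each horocycle $\sigma=\sigma_\pm(P,l,p)$ already passes through $p$, and $p\in l$. Now $l$ is by definition the geodesic in $\tM$ tracking a leaf $l'$ of $\CC$ carried by $\tS$; by Lemma~\ref{qgeod} (not merely Corollary~\ref{ccqgeodcor} --- you need the leaves to be uniform quasigeodesics in $\tM$, not just in $\tK$, in order that $l$ stay within bounded Hausdorff distance of $l'\subset\tS$) the geodesic $l$ lies at uniformly bounded distance from $\tS$. Hence $p$, and therefore $\sigma$ itself, meets a fixed compact neighbourhood $Q$ of $S$ in $M$. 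No $U$-translation is required: every horocycle in $Y_0$ meets $Q$ directly, and your Zorn argument then goes through verbatim.
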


\begin{proof} Each horocycle $\sigma$
meets a bi-infinite geodesic $l (\in \CC)$ at right angles in some $P \in X$. Here $l$ lies at a 
uniformly bounded distance from $\tS$. Hence there exists a compact set $Q \subset M$ such that the image
of $\sigma$ in $M$ passes through $Q$ infinitely often. We partially order closed $U-$invariant subsets $U_\alpha$ of
$Y_0$ as follows:
$U_\alpha < U_\beta$ if $U_\alpha \cap Q \subset U_\beta \cap Q$. Since $Q$ is compact, any totally ordered
chain has a lower bound (by taking intersections). Hence, by Zorn's Lemma, there exists a minimal closed 
$U-$invariant set.
\end{proof}

\subsection{Reduction to two cases}

We now state the main theorem of the paper. The rest of the Section is devoted to its proof.
\begin{theorem} \label{main} Let $\Gamma$ be a degenerate Kleinian group without parabolics and $M = \Hyp^3/\Gamma$ be the 
associated degenerate hyperbolic 3-manifold. Suppose further that $M$ has an incompressible core. Let $X$ be a 
minimal closed 
$H-$invariant subset, where $H=\pslr$. Then $X$ is either an immersed totally geodesic
surface  or all of $M$.
\end{theorem}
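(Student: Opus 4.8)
The strategy is to play off the $H$-minimal set $X$ against the $U$-minimal set $Y\subset Y_0$ furnished by Proposition \ref{u-exist}, following the structure of the corresponding argument in \cite{mmo}. First I would dispose of the two degenerate possibilities directly: if $X$ is contained in a compact subset of $M$, then by Lemma \ref{intersect} and $H$-minimality $X$ is generated by a single totally geodesic plane contained in a compact set, hence its closure is an immersed closed totally geodesic surface and we are done; so assume Hypothesis \ref{hypoth}. Under that hypothesis, Proposition \ref{alter} gives us Alternative A or Alternative B; when the core is incompressible, Alternative B is vacuous and Lemma \ref{nonempty} guarantees that Alternative A holds, so that $\tX\cap\btK$ carries the algebraic lamination $\CC$ and (Lemma \ref{ccqgeod}, Corollary \ref{ccqgeodcor}, Lemma \ref{qgeod}) its leaves are uniform quasigeodesics in $\tM$ that avoid the ending laminations. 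This is precisely what feeds Proposition \ref{u-exist} and produces a $U$-minimal $Y$ containing more than one horocycle.

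Next I would translate everything to the circle/homogeneous-space picture. A closed $H$-invariant $X\subset M$ corresponds to a closed $\Ga$-invariant set of circles $\DD_X\subset\SSS$, and $X$ is an immersed totally geodesic surface (resp. all of $M$) exactly when $\Gamma$-orbits in $\DD_X$ are discrete (resp. $\bbar{\Gamma C}=\SSS$ for some, hence every relevant, $C$). So it suffices to show: if the orbit of some circle $C$ with $C\in\DD_X$ is \emph{not} discrete, then $\bbar{\Gamma C}=\SSS$. Pick $C$ with $D\in\bbar{\Gamma C}\setminus\Gamma C$ nonempty. If $\mathrm{Stab}(D)$ is non-elementary, Theorem \ref{zdcor} immediately gives $\bbar{\Gamma C}=\SSS$ and we are done. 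The remaining, and main, case is where the accumulation is ``thin'', i.e. stabilizers of limit circles are elementary; here I would use the $U$-dynamics. The horocycle $\sigma\in Y$ recurs to a compact set $Q\subset M$ infinitely often (as in the proof of Proposition \ref{u-exist}), so lifting to $\til M$ and using recurrence we get group elements $g_n\in\Gamma$ with $g_n\to\mathrm{id}$ moving $\sigma$ back near itself; since $\sigma$ is not properly embedded and $Y=\bbar{U\sigma}$ contains a second horocycle, these $g_n$ cannot all lie in $AN$ (the stabilizer of the relevant point at infinity together with its horocycle foliation). Applying Theorem \ref{av} to $g_n\to\mathrm{id}$ in $G\setminus AN$ produces $u_n,u_n'\to\infty$ in $U$ with $u_n g_n u_n'\to g\in (AV\setminus\{\mathrm{id}\})$ in a prescribed small neighborhood; the $V$-component of $g$ moves the circle $C$ (through the plane containing $\sigma$) transversally off the $U$-direction, so the closure $\bbar{\Gamma C}$ contains a circle obtained by a nontrivial $V$- (or $AV$-) translate. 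Iterating, $\bbar{\Gamma C}$ is invariant under a set of circles whose union contains a nonempty open subset of $\Lambda=S^2$; then Theorem \ref{openfull} yields some $D'\in\bbar{\Gamma C}$ with $\bbar{\Gamma D'}=\SSS$, and since $\bbar{\Gamma C}$ is closed and $\Gamma$-invariant, $\bbar{\Gamma C}=\SSS$.

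Concretely the steps, in order, are: (1) handle $X$ compact via Lemma \ref{intersect}; (2) under Hypothesis \ref{hypoth} and incompressibility, invoke Proposition \ref{alter} and Lemma \ref{nonempty} to get Alternative A, then Lemma \ref{qgeod} to get uniform quasigeodesic leaves; (3) invoke Proposition \ref{u-exist} for a non-trivial $U$-minimal $Y\subset X$; (4) pass to $\SSS$ and reduce to showing non-discreteness of a $\Gamma$-orbit of circles forces density; (5) in the non-elementary-stabilizer case quote Theorem \ref{zdcor}; (6) in the elementary case, use recurrence of a horocycle in $Y$ to produce $g_n\to\mathrm{id}$ in $G\setminus AN$, apply Theorem \ref{av} to get an $AV$-element in the ``return group'', and deduce that $\bbar{\Gamma C}$ contains circles filling a nonempty open subset of $S^2$; (7) conclude via Theorem \ref{openfull} that $\bbar{\Gamma C}=\SSS$, i.e. $X=M$.

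\textbf{Main obstacle.} The delicate point is step (6): arguing that the return elements $g_n$ genuinely leave $AN$ — equivalently, that the second horocycle in $Y$ is not merely a reparametrization/translate of the first within the same plane and same ideal point — and that the resulting $AV$-element is transverse enough to enlarge $\bbar{\Gamma C}$ to a set of circles covering an open subset of $S^2$ rather than being trapped in a lower-dimensional sub-family. This is exactly where degeneracy (all of $S^2$ being the limit set, via the Cannon-Thurston description) does the work that acylindricity did in \cite{mmo}: the quasigeodesic leaves of $\CC$ together with $Y=\bbar{U\sigma}$ having more than one horocycle guarantee enough transversality, but making this precise — tracking the limiting configuration of circles and verifying the open-set hypothesis of Theorem \ref{openfull} — is the crux of the argument and should mirror, step for step, the endgame of \cite{mmo} once the non-properly-embedded horocycle $Y$ is in hand.
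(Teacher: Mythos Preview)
Your overall architecture is correct and matches the paper's: build the $U$-minimal $Y\subset X$ from Proposition \ref{u-exist}, use that $Y$ is not a single orbit, manufacture small elements of $G$ preserving $Y$, push these into $AV$ via Theorem \ref{av}, and finish with Theorem \ref{openfull} or Theorem \ref{zdcor}. But your organization of the dichotomy in steps (5)--(6) differs from the paper's in a way that leaves a real gap.

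The paper's dichotomy is geometric, not stabilizer-theoretic. Fix a horocycle $\sigma\subset P\in X$ with $\bbar{U\sigma}=Y$; since $\sigma$ is not properly embedded, there is a Cauchy sequence $z_n\in\sigma$ in $M$ with short geodesic segments $[z_n,z_m]$, and one forms nearly-closed loops $\gamma_{mn}$ on $P$ capped by $[z_n,z_m]$. \textbf{Case A}: infinitely many geodesic realizations of $\gamma_{mn}$ lie on $P$. Then $P$ carries two independent closed geodesics, so $\mathrm{Stab}(\partial\tP)$ is non-elementary; if $P$ is not already a closed surface one picks a different $P_1\in X$ and applies Theorem \ref{zdcor} with $D=\partial\tP$, $C=\partial\tP_1$. \textbf{Case B}: the realizations leave $P$, meaning $z_n,z_m$ sit on different local sheets of $P$; \emph{this} is what forces the small elements $g_{mn}$ with $g_{mn}z_m=z_n$ to lie in $G\setminus H$, and then one extracts $f_m\to 1$ in $AV\setminus\{1\}$ (handling the sub-cases $g_n\in AN$ and $g_n\notin AN$ separately), passes to a one-parameter $L\subset AV$ with $LY=Y$, deduces $LX=X$ by $H$-minimality, and applies Theorem \ref{openfull} to the family $LHy$.

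Your step (6) tries to bypass Case A/B and argue directly that the return elements $g_n$ avoid $AN$ ``since $Y$ contains a second horocycle''. This does not work: the existence of a second horocycle in $Y$ says nothing about where the self-accumulations of $\sigma$ occur. It is entirely possible that every short segment $[z_n,z_m]$ stays on the same sheet of $P$, so that the small $g_{mn}$ all lie in $H$ (indeed in $AU$); this is exactly Case A, and in that situation no amount of $U$-dynamics or Theorem \ref{av} will produce a transverse $AV$-element---one must instead read off the non-elementary stabilizer of $P$ from the closed geodesics and invoke Theorem \ref{zdcor}. Conversely, your stabilizer-based split (``$\mathrm{Stab}(D)$ non-elementary vs.\ elementary'' for some $D\in\bbar{\Gamma C}\setminus\Gamma C$) does not by itself supply $g_n\notin H$ in the elementary branch. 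The Case A/B split is precisely the missing device that guarantees you are in one of the two finishing scenarios.

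Two smaller points: (i) your return elements are small elements of $G$, not of $\Gamma$ (which is discrete); (ii) the compact-$X$ case is not disposed of by Lemma \ref{intersect}---the paper simply defines $Y_0$ as \emph{all} horocycles in planes of $X$ in that case and runs the same argument uniformly (alternatively one may quote Theorem \ref{bddplane}, but this is not needed). Finally, ``iterating'' a single $g\in AV$ does not produce an open set of circles; the paper varies the neighborhood $G_0$ in Theorem \ref{av} to obtain $f_m\to 1$ in $AV\setminus\{1\}$ and takes a Hausdorff limit of $\langle f_m\rangle$ to get a genuine one-parameter subgroup $L\subset AV$ before invoking Theorem \ref{openfull}.
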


After obtaining closed minimal $U-$invariant sets $Y$ from Proposition \ref{u-exist},
 we shall follow the overall plan of \cite{margulis-opp, mmo}. 
We thus obtain a sequence $g_n \in (G\setminus H)$
 such that $g_n \to 1$ and $g_nY = Y$. From this, using Theorem \ref{av},
 we extract a sequence  $v_n \in AV$ such that $v_n \to 1$ and $v_nY = Y$.

First since $Y$ is closed minimal and $U-$invariant, it follows that for every $y \in Y$, $\bbar{Uy} = Y$. Further,
Proposition \ref{geod-horoc} guarantees that $Y \neq Uy$, i.e. the $U-$minimal set $Y$ {\bf does not}
consist of  single orbit.
We now want to
find a sequence of small elements $g_n \in (G\setminus H)$
 such that $g_nY = Y$. By minimality, it suffices to find $g_n$ such that 
$g_nY \cap Y \neq \emptyset$.

Fix a horocycle $\sigma$ in $Y$. Then by minimality of $Y$, $\bbar{\sigma} = Y$. 
Let $P$ be the immersed totally geodesic plane containing the horocycle $\sigma$ in $Y$.
 Then there exists a sequence $\{ z_n \}$ satisfying the following.
\begin{enumerate}
\item $z_n \in Uy$.
\item $\{ z_n \}$  is Cauchy in $M$. 
\item There is a sequence of geodesics $[z_n,z_m]$ joining $z_n, z_m$ in $M$
 such that the lengths $l([z_n,z_m])$ tend to zero. 
\end{enumerate}
By choosing the length of the horocycle segment between $z_n, z_m$ large enough, we can assume that
 the unique geodesic $(z_n,z_m)$ lying on $P$, joining $z_n$ to $z_m$,  and path-homotopic to
the corresponding horocycle segment (contained in $Uy$)
is almost perpendicular
to $Uy$ at its end-points.  Hence the two ends of $(z_n,z_m)$ 
 are nearly parallel. Joining them by $[z_n, z_m]$ gives a closed loop $\gamma_{mn}$ arbitrarily close to a 
closed geodesic.
Two possibilities arise:

\begin{enumerate}
\item[{\bf Case A:}]
 The geodesic realizations of $\gamma_{mn}$ lie on $P$ for infinitely many pairs $m, n$. 
\item[{\bf Case B:}]
The geodesic realizations of $\gamma_{mn}$ do not lie on $P$ for all but finitely many pairs $m, n$. 
\end{enumerate}

\subsection{Proof of Main Theorem in Case A} We lift the points $z_n$ to a totally geodesic plane $\tP$ in $\tM$. Also let $\til{\sigma}$
be the lift of the horocycle on which the points $\{ z_n \}$ lie.
Then there exist (at least) four points $a, b, c, d$ on $\til{\sigma}$ such that the geodesics $(a,b), (c,d)$ lie very close to 
lifts of the closed geodesics
$\gamma_1, \gamma_2 \in \{ \gamma_{mn} \}$. Extend $(a,b), (c,d)$ infinitely in both directions to get
bi-infinite geodesics $(a_\infty,b_\infty), (c_\infty,d_\infty)$. (See Diagram below.)

\smallskip

\begin{center}

\includegraphics[height=4cm]{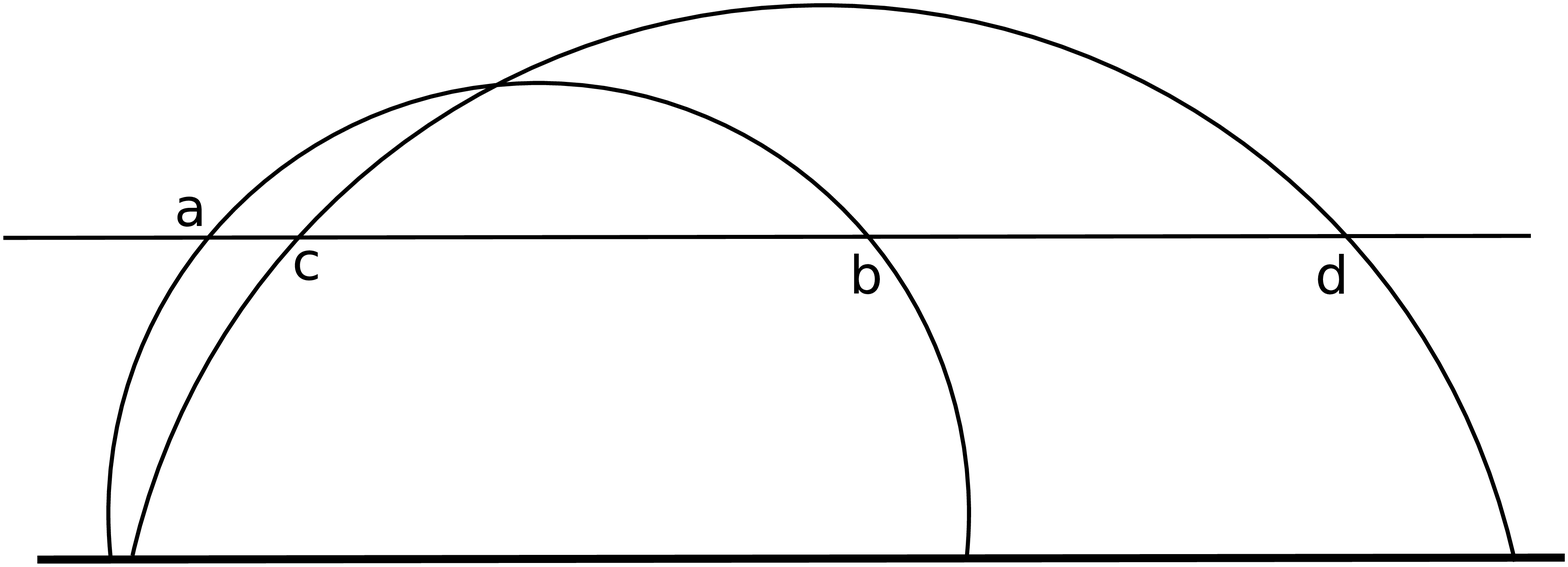}

\smallskip

\end{center}

\smallskip

 Since the collection $\gamma_{mn}$ is infinite, we can choose $a, b, c, d$ and a base-point $o \in \tP$ in such a way that 
\begin{enumerate}
\item the visual angles between the pairs $(a_\infty, \gamma_1 (-\infty))$, $(b_\infty, \gamma_1 (+\infty))$,
$(c_\infty, \gamma_2 (-\infty))$, $(d_\infty, \gamma_2 (+\infty))$ are all small. 
\item the distance between $b, d$ is large.
\end{enumerate}
It follows that at least three of points $\gamma_1 (-\infty), \gamma_1 (+\infty),  \gamma_2 (-\infty), \gamma_2 (+\infty)$
are distinct points on the boundary $S^2$. Since $\Ga$ is discrete without parabolics, it follows that all four points
$\gamma_1 (-\infty), \gamma_1 (+\infty),  \gamma_2 (-\infty), \gamma_2 (+\infty)$ are distinct and hence the group generated by 
$\gamma_1, \gamma_2$ is non-elementary. In other words, the stabilizer of $P$ is non-elementary. If $P$ is already
a closed immersed surface in $M$, then there is nothing left to prove for Theorem \ref{main}. Else, choose $P_1 \neq P$ such that $P_1$
is an immersed plane in the minimal set $X$. Then $P \subset \bbar{P_1}$ and the hypotheses for Theorem  \ref{zdcor} are satisfied.
It follows that $\bbar{\Ga (\partial \tP_1)} = \SSS$; or equivalently $X = M$.

\subsection{Proof of Main Theorem in Case B}
Case B  implies that (for infinitely many pairs $z_n, z_m$)
the short geodesic segments $[z_n,z_m]$ necessarily join points $z_n, z_m$ that lie in different sheets of $B \cap P$ for some small ball
$B$ in $M$. 
Hence there exist small elements $g_j\in G\setminus H$ such that $g_j Y \cap Y \neq \emptyset$ (e.g. choose $g_{mn}$ such that $g_{mn} z_m = z_n$).
Since $\bbar{Uy} = Y$ for all $y \in Y$, this  immediately gives (cf. \cite[Lemma 9.5]{mmo})
\begin{lemma}  For any $ y \in Y$  there exists a sequence $g_n \to 1$  in $G\setminus H$ such
that $g_n y\in Y$.\label{smallg} \end{lemma}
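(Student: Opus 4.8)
The plan is to run the standard ``small elements'' argument from homogeneous dynamics, which is precisely \cite[Lemma 9.5]{mmo}, feeding in the output of Case B together with the $U$-minimality of $Y$ provided by Proposition \ref{u-exist}. First I would extract from Case B a clean preliminary statement: there is a point $w \in Y$ and a sequence $g_j \to 1$ in $G \setminus H$ with $g_j w \in Y$ for all $j$. Indeed, in Case B the short geodesics $[z_n,z_m]$ join points $z_n,z_m$ of $Y$ lying in different local sheets of $P$, so the element $g_{mn} \in G$ with $g_{mn} z_m = z_n$ cannot lie in $H$ (no element of $H$ exchanges the two sheets), while $\dist(z_n,z_m)\to 0$ forces $g_{mn}\to 1$; along a suitable subsequence of pairs this gives the claimed $w$ (say $w = z_m$) and sequence $g_j = g_{mn}$.

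Next, fix an arbitrary $y \in Y$. By Proposition \ref{u-exist}, $\bbar{Uw} = Y \ni y$, so choose $u_k \in U$ with $u_k w \to y$. The key observation is that although conjugating $g_j$ by $u_k$ with $u_k \to \infty$ would destroy smallness, one may instead fix $u_k$ first and then choose $j = j(k)$ large enough that $\epsilon_k := u_k g_{j(k)} u_k^{-1}$ lies within distance $1/k$ of the identity; this is possible because, with $u_k$ fixed, $u_k g_j u_k^{-1} \to 1$ as $j \to \infty$. Since $u_k \in U \subseteq H$ and $H$ is a subgroup, conjugation by $u_k$ preserves $G\setminus H$, so $\epsilon_k \in G \setminus H$. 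Moreover $\epsilon_k (u_k w) = u_k (g_{j(k)} w) \in Y$ by the $U$-invariance of $Y$.

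To conclude, write $u_k w = \delta_k y$ with $\delta_k \to 1$ in $G$ (a local-diffeomorphism estimate on $\Gamma\backslash G$, valid for $k$ large). Then $\delta_k y = u_k w \in Y$ and $\epsilon_k \delta_k y = \epsilon_k(u_k w) \in Y$, while $\delta_k \to 1$ and $\epsilon_k\delta_k \to 1$. For each $k$, at least one of $\delta_k,\ \epsilon_k\delta_k$ lies in $G\setminus H$: if both were in $H$ then $\epsilon_k = (\epsilon_k\delta_k)\delta_k^{-1}\in H$, contradicting $\epsilon_k \notin H$. Choosing such an element $g_k$ for each $k$ and passing to a subsequence produces a sequence $g_n \to 1$ in $G\setminus H$ with $g_n y \in Y$, as required.

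The only genuinely delicate point is the tension between ``staying small'' and ``staying outside $H$'': smallness of the conjugates is salvaged by fixing the $U$-element before sending $g_j \to 1$, and the $G\setminus H$ requirement survives because $U \subseteq H$, together with the elementary dichotomy in the last step. Everything else is bookkeeping internal to $\Gamma\backslash G$.
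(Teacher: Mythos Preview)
Your approach matches the paper's: it simply cites \cite[Lemma 9.5]{mmo} after noting that Case B supplies small $g_j\in G\setminus H$ with $g_jY\cap Y\neq\emptyset$ and that $\bbar{Uy}=Y$ by $U$-minimality. You have unpacked that citation, and the conjugation trick (fix $u_k$ first, then send $g_j\to 1$) together with the closing $H$-dichotomy is exactly the right mechanism.

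One small repair is needed in your first paragraph. Taking $w=z_m$ for a \emph{fixed} index $m$ does not give $g_j\to 1$: for fixed $m$, the elements $g_{mn}$ converge to the (generically nontrivial) element carrying $z_m$ to $\lim_n z_n$. Take instead $w:=\lim_n z_n\in Y$ and write $z_n=h_n w$ with $h_n\to 1$, so that $h_n w\in Y$ for all $n$. If infinitely many $h_n$ lie in $G\setminus H$ you have your sequence. Otherwise $h_n\in H$ eventually, and then for large $m,n$ the small element $h_n^{-1}g_{mn}h_m$ fixes $w$ and hence equals $1$ by discreteness of $\Gamma$, giving $g_{mn}=h_nh_m^{-1}\in H$ --- which contradicts Case B. This is precisely the dichotomy you already invoke in your final step, so no new idea is required; just apply it once at the outset as well.
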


By Lemma \ref{smallg}, there is a sequence $g_n \to id$
 in $G\setminus H$ such that $g_nY\cap Y \neq \emptyset$. Since $UY=Y$ is minimal, it follows that $g_nY = Y$.
 From $g_n$ we extract a sequence $f_n \in AV
\setminus \{ 1\}$ such that $f_n \to 1;
f_n \neq 1$ and $f_n Y =Y$.

If $g_n \in AN$ for infinitely many $n$, then since $g_nU Y = g_nY=Y$, we can post-multiply $g_n$
by the inverse of its  $U$ component to get a sequence $f_n \to 1$ with $f_n \in AV \setminus \{ 1\}$
satisfying $f_n Y = Y$. 

Else let $g_n \notin AN$ for all but finitely many $n$. Since $Ug_nU Y = Y$,   it follows from
 Theorem
\ref{av} that for any neighborhood $G_0$ of  $1 \in G$, there exist $u_n,   u_n' \in U$ such that
$u_ng_nu_n' \to f \in   G_0 \cap (AV \setminus {id})$. Since $Ug_nU Y = Y$, and $Y$ is closed,
we have $f Y = Y$. Further,
since  $G_0$ is arbitrary, there exists a sequence $f_m \to 1$ in $(AV \setminus {id})$
satisfying $f_m Y = Y$.

Thus, in either case, there exists a sequence $f_m \to 1$ in $(AV \setminus {id})$ such that
 every element of the cyclic group $<f_m>$ preserves $Y$, i.e. $<f_m>Y=Y$.
 Passing to a subsequence and taking a Hausdorff limit, we obtain a closed, 1-parameter group 
$ L \subset AV$ such that $ LY = Y$ . Since $X$ is $H-$minimal it follows that $LX=X$ since for every $\lambda \in L$
$Y (=\lambda Y) \subset \lambda X \cap X$ forcing $\lambda X=X$. 

The cosets $LHy \subset X$ give a non-constant, continuous family of circles  whose union
contains an open subset of $S^2$. Hence, by Theorem \ref{openfull}
the collection of circles $\{ \partial P: P \in X\}$ is all of $\SSS$. Hence $X = M$.
This completes the proof of Theorem \ref{main}. \hfill $\Box$

\subsection{ Remarks on the compressible core case} The proof of Theorem \ref{main}  deals with
Alternative A of 
Proposition \ref{alter}, where a boundary component $S$ of the compact core $K$ carries an algebraic lamination essentially given
by $X \cap K$. As Lemma \ref{nonempty} points out, incompressibility of $K$ is sufficient to guarantee this alternative. 
It seems to us that Alternative B will require quite different techniques to handle. The test-case is when $\Ga$ is a degenerate
handlebody group, i.e. it is free without parabolics and has $S^2$ as its limit set. 

\section{Compact totally geodesic surfaces}\label{fin}

 For the purposes of this Section, we relax the assumption that $\Ga$ 
is degenerate and that $M$ has incompressible core. However, we do assume that $\Ga$ has no parabolics
and that it has a compact core $K$. In other words the main Theorem of this section applies to finitely generated
Kleinian groups without parabolics. Let $\Lambda_\Ga$ denote the limit set of $\Ga$.
We first recall some material from \cite{mahan-lecuire} to which we refer for details.

\subsection{Almost minimizing geodesics}
\begin{defn}

A geodesic $\gamma = \gamma(t): t \in \reals$ in $M$  is called {\bf 
almost minimizing} if it is has unit speed  and if there exist $C \geq 0$ 
such that  $ d_M(\gamma(0),\gamma(s)) \geq |s|- C$.

\end{defn}

It is easy to construct almost minimizing geodesic rays. 
Choose a sequence $z_n$ exiting 
an end $E$, join them to $S (= \partial E)$ by minimizing geodesic segments, and take a limit.
Any limiting ray is an almost minimizing geodesic ray.
It follows from work of Ledrappier \cite[Proposition 4]{ledrappier} and Eberlein \cite[Proposition 5.6]{eberlein2}
that the set of landing points in $S^2_\infty$ of (lifts to $\tM$ of) almost minimizing geodesic rays
coincides with  the complement of the {\it horospherical limit set} $\Lambda_h (\Ga)$
of $\Ga$. We shall not need this in what follows; however we shall denote the collection of landing points
of almost minimizing geodesic rays by $\Lambda_h (\Ga)^c$. 

Let $\partial i$ denote the (boundary value of)
the Cannon-Thurston map from the Gromov boundary $\partial \Ga$ to its limit set $\Lambda_\Ga$.

\begin{defn} The {\bf multiple limit set}
 $\Lambda_m (\Ga) = \{ x \in \Lambda_\Ga : \# |(\partial i)^{-1}(x)| > 1\}$. 
\end{defn}

Equivalently, by Theorem \ref{ctstr}, 

\begin{center}

$\Lambda_m = \{ \partial i (y) : y$ is an end-point of a leaf of   $\LL_E$ for some ending lamination $\LL_E \}.$

\end{center}

In \cite[Section 3]{mahan-lecuire}, we establish
\begin{prop}\cite{mahan-lecuire} $\Lambda_h (\Ga)^c = \Lambda_m$. 
 \label{horo-mult}
\end{prop}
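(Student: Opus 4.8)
The statement $\Lambda_h(\Ga)^c = \Lambda_m$ is an equality of two subsets of $\Lambda_\Ga = S^2$ (or rather of $\partial\Ga$, depending on the bookkeeping), so the plan is to prove the two inclusions separately, using the structure of the Cannon-Thurston map (Theorem \ref{ctstr}) to translate between the geometry of almost minimizing rays in $\tM$ and the combinatorics of ending laminations.

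\emph{Inclusion $\Lambda_m \subseteq \Lambda_h(\Ga)^c$.} Let $x \in \Lambda_m$, so by Theorem \ref{ctstr} we may write $x = \partial i(y)$ where $y$ is an endpoint of a leaf $\ell$ of some ending lamination $\LL_E$, with $E$ an end of $M$ and $S = \partial E$ its boundary surface. The idea is that a leaf of $\LL_E$, pushed into $M$ and followed out the end $E$, is (coarsely) an almost minimizing ray. More precisely, I would take the geodesic ray in $\tM$ landing at $x$ and show it is almost minimizing: a lift of a leaf of the ending lamination to $\tS \subset \til M$ is a uniform quasigeodesic in $\tM$ (this is exactly the kind of fact established in Lemma \ref{qgeod}, or can be seen directly since leaves of ending laminations have well-defined distinct endpoints under $\partial i$ — here one uses that $\ell$ itself is nondegenerate). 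From quasigeodesic-ness plus the fact that the corresponding ray exits the degenerate end $E$ monotonically (geodesics exiting a degenerate end satisfy a linear lower bound on distance from a fixed compact core, again from \cite{mahan-lecuire}), one extracts the almost minimizing inequality $d_M(\gamma(0),\gamma(s)) \ge |s| - C$. Hence $x$ is a landing point of an almost minimizing ray, i.e. $x \in \Lambda_h(\Ga)^c$.

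\emph{Inclusion $\Lambda_h(\Ga)^c \subseteq \Lambda_m$.} Conversely, suppose $x = \gamma(+\infty)$ for an almost minimizing geodesic ray $\gamma$ in $\tM$, and suppose for contradiction that $x \notin \Lambda_m$, i.e. $(\partial i)^{-1}(x)$ is a single point $y \in \partial\Ga$. The plan here is: the fiber being a singleton means (via Theorem \ref{ctstr} and the structure of the compact core) that $\gamma$ does not track any leaf or ideal polygon of any ending lamination; one then argues that $\gamma$ must eventually leave every compact subset of $M$ through some end $E$ and, having no "lamination obstruction," must actually be non-exiting or must fail almost-minimization. The cleanest route is probably: an almost minimizing ray cannot recur to a compact set (that would violate the linear lower bound), so $\gamma$ exits some end $E$; but then its lift's endpoint, being a non-multiple point, is approached in $\tS$ in a way governed by the ending lamination geometry, and a ray landing at such a point must make definite "backtracking" in $M$ relative to $S$ unless $y$ is carried by a leaf — contradicting almost-minimization. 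This is the place where I would lean most heavily on the detailed analysis of \cite[Section 3]{mahan-lecuire} relating exiting/non-exiting geodesics, horospherical limit points, and the Cannon-Thurston map.

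\textbf{Main obstacle.} The hard part will be the second inclusion, specifically making precise the claim that a point $x$ with singleton Cannon-Thurston fiber, when approached by an exiting geodesic ray, forces that ray to be \emph{not} almost minimizing — equivalently, that almost minimization of $\gamma$ forces a collapse in the fiber. This requires quantitative control: one must compare the length of $\gamma$ up to time $s$ with the distance in $M$ from $\gamma(0)$, and show that the "monotone model geometry" of a degenerate end (from Minsky's and Mj's work on ending laminations and Cannon-Thurston maps) produces a definite deficit $d_M(\gamma(0),\gamma(s)) \le |s| - \epsilon s$ along a subsequence whenever the endpoint is not a lamination endpoint. Since the excerpt defers this to \cite{mahan-lecuire}, I would present the argument at the level of citing that reference for the key geometric estimate and assembling the logical equivalence around it, rather than reproving the model-geometry input.
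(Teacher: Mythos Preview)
Your proposal contains a genuine error in the first inclusion $\Lambda_m \subseteq \Lambda_h(\Ga)^c$. You assert that ``a lift of a leaf of the ending lamination to $\tS \subset \til M$ is a uniform quasigeodesic in $\tM$'' and cite Lemma~\ref{qgeod} for this. This is exactly backwards. By Theorem~\ref{ctstr}, the Cannon--Thurston map $\partial i$ \emph{identifies} the two endpoints of any leaf $\ell$ of $\LL_E$; hence such a leaf is precisely a leaf of the Cannon--Thurston lamination $\LL_{CT}$, and by Lemma~\ref{nonemptyiff} it cannot be a quasigeodesic in $\tM$. Lemma~\ref{qgeod} does not establish quasigeodesicity of ending-lamination leaves; rather, its whole point is to show that the auxiliary lamination $\CC_0$ arising from $X\cap K$ contains \emph{no} leaf of any $\LL_E$, and therefore its leaves \emph{are} quasigeodesics. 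Your parenthetical ``leaves of ending laminations have well-defined distinct endpoints under $\partial i$'' is simply false.

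Because of this, your route to producing an almost minimizing ray landing at $x\in\Lambda_m$ collapses. The paper proceeds quite differently: given a leaf $(a,b)$ of $\LL_E$ with $\partial i(a)=\partial i(b)=z$, it takes a sequence of simple closed curves $a_n$ on $S=\partial E$ whose axes converge to $(a,b)$; their geodesic realizations $s_n$ in $M$ exit the end $E$. One then joins $s_n$ to $a_n$ by a \emph{minimizing} geodesic $r_n$ in $M$ and extracts a subsequential limit $r$, which is almost minimizing by construction and lands at $z$. No quasigeodesicity of $\ell$ is used (or true).

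For the reverse inclusion $\Lambda_h(\Ga)^c \subseteq \Lambda_m$, your plan is a contradiction argument that is largely deferred to the cited reference and does not isolate the mechanism. The paper's argument is again constructive: an almost minimizing ray $r$ exits some end $E$; one chooses pleated surfaces $S_i$ exiting $E$ with bounded-length essential curves $s_i$ meeting $r$, pulls these back along $r$ to curves $a_i$ on $S_0=\partial E$, and lifts the resulting annuli to $\tM$ to obtain quadrilaterals with bounded top and unbounded bottom. The lifts of $a_i$ then converge to a leaf $l$ of $\LL_E$ whose endpoints are both sent by $\partial i$ to $\til r(\infty)$, placing $\til r(\infty)$ in $\Lambda_m$. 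Your sketch does not supply this exiting-surfaces/annulus picture, which is the actual content of the argument.
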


We include a proof-sketch for completeness:\\

Fix a degenerate end $E$ of $M$.
To show that $ \Lambda_H^c \subset \Lambda_m$ it suffices to show that if $\til r$
is the lift of an almost minimizing geodesic $r$ to $\til E$, then it lands
on $\Lambda_m$. Choose a sequence of surfaces $S_i$ exiting the end $E$. Choose homotopically
essential simple closed curves 
$s_i \subset S_i$ of bounded length (bounded by $C_0$ say) such that $r \cap s_i \neq \emptyset$  
for all $i$. Choose simple closed curves $a_i$ on $S_0 = \partial E$ such that $r(0) \in a_i$
for all $i$ (we are just freely homotoping $s_i$ down to $S_0$ making sure that the curve passes
through $r(0)$). Consider the annulus with boundary components $a_i, s_i$ and containing the segment
$r_i$ of
$r$ from $r(0)$ to $r \cap s_i$. Lifting to the universal cover we obtain a quadrilateral 
whose `top edge' (corresponding to $s_i$) has length bounded by $C_0$ and whose `bottom edge'
(corresponding to $a_i$) has length tending to infinity as $i \to \infty$. Let $\til{r_i}$ 
(resp. ${\til{r_i}}^o$) be the lift
of $r_i$ joining the beginning points of $a_i, s_i$ (resp. the  lift
of $r_i$ with orientation reversed joining the end points of $s_i, a_i$). It follows that 
\begin{enumerate}
\item The lifts of curves $a_i$ to $\til M$ converge to a leaf $l$ of the ending lamination for $E$.
\item The Cannon-Thurston map identifies $l(-\infty), l(\infty)$ to $\til{r}(\infty)$.
\end{enumerate}
Thus $\til{r}(\infty) \in \Lambda_m$.

Conversely, if $z \in \Lambda_m$, then by Theorem \ref{ctstr}, there is a 
 an end $E$ and a bi-infinite leaf $(a, b)$ of the ending lamination $\LL_E$ corresponding to the end $E$
such that $\hat{i} (a) = \hat{i} (b) =z$. Then \cite[Chapter 9]{thurstonnotes} there exists
 a sequence $a_n$ of simple closed geodesics on $S (=\partial E)$ 
such that 
\begin{enumerate}
\item $a_n^{\pm \infty}$ converges to $\{ a, b \}$ (where $a_n^{\pm \infty}$ denote the attracting and repelling fixed points of $a_n$ acting on $\partial G$, the Gromov boundary of $G$). 
\item The geodesic realizations $s_n$ of $a_n$ in $E$ exit $E$.
\end{enumerate}
Joining $s_n$ to $a_n$ by a geodesic $r_n$ that realizes $d_M (s_n, a_n)$ and taking limits as $n\to \infty$,
 we see that $r_n$ converges (up to subsequencing) to an almost minimizing geodesic ray $r$
in $E$, such that
$r(\infty) = \hat{i} (a) = \hat{i} (b)$. This forces $r(\infty) = z$ and hence $z \in  \Lambda_H^c$. 
$\Box$

\subsection{Finitely many closed surfaces}
The rest of this section is devoted to showing:

\begin{theorem} \label{main2} Let $\Ga$ be a finitely generated Kleinian group without parabolics and let
$M = \hyps/\Gamma$. If $M$ has infinite volume, then there can exist only finitely many compact
totally geodesic surfaces in $M$. \end{theorem}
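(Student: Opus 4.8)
\emph{Proof sketch / plan.} The plan is to use Theorem \ref{bddplane} to reduce the statement to a confinement assertion, and then to use the structure of Cannon--Thurston maps together with almost minimizing geodesics to show that a compact totally geodesic surface cannot escape into a degenerate end. Since $M$ has infinite volume it is non-compact, so Theorem \ref{bddplane} applied to any compact $K\subset M$ says that the compact geodesic surfaces contained in $K$ form a finite set; hence it suffices to exhibit one compact $K_0\subset M$ containing every compact totally geodesic surface. If $P$ is such a surface then $\pi_1(P)\to\pi_1(M)$ is injective with image a cocompact Fuchsian group $\Ga_P$ acting on a lift $\tP$, so $\Lambda_{\Ga_P}=\partial\tP\subset\Lambda_\Ga$ and $\tP=\mathrm{hull}(\partial\tP)\subset\mathrm{hull}(\Lambda_\Ga)$; thus $P$ lies in the convex core $C(M)$. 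By tameness $M$ has finitely many ends, each geometrically finite or simply degenerate (there are no parabolics), so $C(M)$ is the union of a compact set and finitely many neighbourhoods of the degenerate ends. It therefore remains to bound the depth to which a compact totally geodesic surface can enter a degenerate end.

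Suppose this fails: there are compact totally geodesic surfaces $P_n$ and points $x_n\in P_n$ whose depth in a fixed degenerate end $E$ tends to $\infty$. Since $P_n$ is compact, every geodesic in a lift $\tP_n$ projects to a geodesic of $M$ that recurs to a compact set, hence lands on the conical limit set; as every point of $C_n:=\partial\tP_n$ is an endpoint of such a geodesic (any point of a geodesic plane lies on a geodesic landing there), $C_n\subset\Lambda_c(\Ga)$, and therefore $C_n\cap\Lambda_m(\Ga)=\emptyset$: a point of $\Lambda_m$ is, by Theorem \ref{ctstr}, the image of an endpoint of a leaf of an ending lamination, and the geodesic ray to it exits an end, so it is not conical. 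Consequently $(\partial i)^{-1}(C_n)$ is a continuous bijective preimage of $C_n$, i.e.\ a topological circle $\widehat C_n\subset\partial\Ga$ disjoint from $(\partial i)^{-1}(\Lambda_m)$. Next I would normalize: join $x_n$ to the compact core $K$ by a minimizing segment of length $R_n\to\infty$, lift it, and translate by an element of $\Ga$ so that its core endpoint lies in a fixed fundamental domain for the $\Ga$-action on the preimage of $K$. A subsequence of these segments converges to a geodesic ray whose projection is almost minimizing and exits $E$, so by Proposition \ref{horo-mult} it lands at some $\xi\in\Lambda_m$. Along this normalization $x_n\to\xi$, and since $x_n\in\tP_n=\mathrm{hull}(C_n)$ there is $\zeta_n\in C_n$ with $\zeta_n\to\xi$; hence $(\partial i)^{-1}(\zeta_n)\in\widehat C_n$ converges (after a further subsequence) to one of the finitely many points of $(\partial i)^{-1}(\xi)$, say to an endpoint $a$ of a leaf $(a,b)$ of $\LL_E$.

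The contradiction then comes from the fact that a \emph{round} circle cannot run this deep toward $\xi$. Following the method of the proof of Lemma \ref{qgeod}, I would translate the planes $\tP_n$ by isometries of $\hyps$ — pulling a point at depth $n$ on the normalizing ray back to a fixed basepoint — and pass to a geometric limit: on one side the sets $\tP_n\cup\partial\tP_n$ subconverge to a round disk, while on the other side the geodesic planes carrying the leaves of $\LL_E$ that emanate toward $\xi$ subconverge, exactly as in Lemma \ref{qgeod}, to the one-point compactification of a pronged singularity; a round disk whose boundary circle meets $\xi$ but avoids every ending-lamination endpoint is incompatible with this pronged structure. This would show that no $P_n$ escapes $E$, and hence, combined with the reduction of the first paragraph, complete the proof. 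In the remaining degenerate sub-case, where $C_n$ instead collapses to the point $\xi$, one argues from discreteness of $\Ga$: only finitely many closed geodesics of $M$ have length below a given bound and each lies at bounded depth, while only finitely many compact totally geodesic surfaces contain a fixed closed geodesic, so $\mathrm{sys}(P_n)\to\infty$ and hence $\mathrm{area}(P_n)\to\infty$, which is again incompatible with $C_n\to\{\xi\}$.

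The main obstacle is the geometric-limit step in the last paragraph: making rigorous the comparison between the limiting round disks and the pronged-singularity picture of $\LL_E$ at $\xi$ (and cleanly disposing of the collapsing sub-case). Everything else is a routine assembly of Theorem \ref{bddplane}, tameness, the structure of the convex core, Proposition \ref{horo-mult}, and the Cannon--Thurston dictionary recalled in Section \ref{prel}.
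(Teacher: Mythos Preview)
Your reduction in the first paragraph is correct and coincides with the paper's opening: compact geodesic surfaces lie in the convex core, and by Theorem~\ref{bddplane} it suffices to bound their depth in each degenerate end $E$. Your observation that $\partial\tP_n\subset\Lambda_c(\Ga)$ and hence $\partial\tP_n\cap\Lambda_m=\emptyset$ is also correct and useful. From there, however, your argument and the paper's diverge, and the step you flag as ``the main obstacle'' is a genuine gap, not merely an incomplete detail.

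The appeal to Lemma~\ref{qgeod} does not transfer. In that lemma the contradiction arises because one has \emph{several} planes $P_1,\dots,P_k$, one for each side of a complementary ideal polygon of $\LL_E$, all asymptotic to the same point $z$; the union of their boundary circles is forced (by the Cannon--Thurston structure) to be the suspension of $k$ points, which is impossible for a union of round circles. In your situation you have a single family $\tP_n$, and nothing ties these planes to the leaves of $\LL_E$; there are no auxiliary ``planes carrying leaves of $\LL_E$'' available to produce the pronged picture. After your $\pslc$-translation the $\tP_n$ subconverge to \emph{one} round disk through $\xi$, and that is perfectly compatible with any local picture at $\xi$: a single round circle through a point imposes no constraint. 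Likewise, the collapsing sub-case does not close: $C_n\to\{\xi\}$ only says $\tP_n$ is visually far from the basepoint, which carries no information about $\mathrm{area}(P_n)$ (every $\tP_n$ is a full hyperbolic plane regardless), and the assertion that only finitely many compact geodesic surfaces contain a fixed closed geodesic is not justified.

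The paper obtains the contradiction by a completely different, measure-theoretic route. Setting $\Delta=\pi_1(S)$ with $S=\partial E$ (quasiconvex in $\Ga$ by Lemma~\ref{periqc}) and $\GG_i=\Delta\cap\pi_1(S_i)$, one passes to the cover $M_i$ corresponding to $\GG_i$ and finds a geodesic segment $[p_i,q_i]$ from a lift $\Sigma_i$ of $S$ to a lift $\bbar{S_i}$ of $S_i$, perpendicular to $\bbar{S_i}$, with length tending to infinity. The concatenation $[p_i,q_i]\cup a_i$ with any ray $a_i\subset\bbar{S_i}$ is a uniform quasigeodesic in $\tM$ (Lemma~\ref{concat}), and its projection to $M$ is almost minimizing (Lemma~\ref{am}). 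On the other hand each such concatenation is path-homotopic in $M_i$ to a geodesic segment $r_i$ on $\Sigma_i$ running from $p_i$ to $bdy(\Sigma_i^0)$. Passing to limits, one obtains a family $\RR_\infty$ of rays in $\tS$ whose landing set $\partial\RR_\infty$ misses only $\bigcup_i\partial\GG_i$, hence is of full measure in $\partial\Delta$ (Lemma~\ref{fullmre}). But every $r_\infty(\infty)\in\partial\RR_\infty$ has $\partial i(r_\infty(\infty))\in\Lambda_m$ by Proposition~\ref{horo-mult}, so by Theorem~\ref{ctstr} it is an endpoint of a leaf of $\LL_E$; such endpoints form a set of measure zero in $\partial\Delta$. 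This is the contradiction. The argument thus replaces your single-point limit $\xi$ by an entire full-measure set of directions in $\partial\Delta$, and it is this abundance, produced via the intersection subgroups $\GG_i$ and the perpendicular-concatenation trick, that makes the Cannon--Thurston/ending-lamination dichotomy bite.
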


When $\Ga$ is convex cocompact, Theorem \ref{main2} is a direct consequence of \cite[Corollary B.2]{mmo}.
Therefore we are interested in the case when at least one end $E$ of $M$ is degenerate.
We argue by contradiction. Suppose that there is an infinite sequence $\{ S_i \}$ of totally geodesic surfaces
in $M$.
As an  immediate
consequence of Theorem \ref{bddplane}, we  have the
following:

\begin{lemma}\label{arbdeep}
Let $\Ga$ be a finitely generated Kleinian group without parabolics and let
$M = \hyps/\Gamma$. Suppose $M$ has infinite volume.
 If there is an infinite sequence $\{ S_i \}$ of totally geodesic surfaces
in $M$, then there exists an end $E$ such that (after passing to a subsequence if necessary)
 the sequence $\{ S_i \}$ satisfies the following:\\
 For any compact subset $Q$ of $M$, $S_i \cap (E \setminus Q)= \emptyset$ for only finitely many $i$.
\end{lemma}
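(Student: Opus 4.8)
\textbf{Proof proposal for Lemma \ref{arbdeep}.}

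The plan is to use Theorem \ref{bddplane} to first confine the surfaces $\{S_i\}$ to the ends of $M$, and then a pigeonhole-and-diagonal argument to push them out to infinity in a single end. First I would fix a compact core $K$ of $M$, chosen large enough that $M \setminus K$ is a disjoint union of (neighborhoods of) ends $E_1, \dots, E_k$, of which at least one, say $E_1$, is degenerate (recall we have reduced to this case, since convex cocompact $M$ is handled by \cite[Corollary B.2]{mmo}). Now apply Theorem \ref{bddplane} with $N = M$ and the compact set $K$: since $M$ has infinite volume it is not compact, so the union of compact geodesic surfaces in $M$ cannot be dense in $M$; hence the set of compact geodesic surfaces \emph{contained in $K$} is finite. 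Since $\{S_i\}$ is infinite, after discarding finitely many indices we may assume no $S_i$ is contained in $K$, i.e. every $S_i$ meets $M \setminus K = \bigsqcup_j E_j$.

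Next I would make the containment statement uniform by exhausting $M$ with an increasing sequence of compact cores (or compact sets) $K = Q_0 \subset Q_1 \subset Q_2 \subset \cdots$ with $\bigcup_m Q_m = M$, each $Q_m$ chosen so that $M \setminus Q_m$ is again a union of end-neighborhoods $E_j^{(m)} \subset E_j$. Applying Theorem \ref{bddplane} with the compact set $Q_m$ in place of $K$, the same dichotomy shows that only finitely many $S_i$ are contained in $Q_m$; equivalently, for each fixed $m$, all but finitely many $S_i$ meet $M \setminus Q_m$. The key additional point is a \emph{connectedness} observation: if $S_i$ is a compact connected totally geodesic surface meeting $M \setminus Q_m$ but \emph{not} meeting $M \setminus Q_{m+1}$, then $S_i \subset Q_{m+1}$; and if $S_i$ meets two distinct end-neighborhoods $E_j^{(m)}$ and $E_{j'}^{(m)}$ with $j \neq j'$, then, being connected, $S_i$ must also meet the "collar" region $Q_{m'} \setminus Q_m$ joining them, for $m'$ large, so in particular $S_i$ is not eventually confined. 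The upshot is that for each $i$ there is a well-defined (possibly empty, for finitely many $i$) set of ends that $S_i$ enters arbitrarily deeply.

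Now the pigeonhole step: there are only $k$ ends, so by passing to a subsequence I may assume there is a single end $E$ (which, refining further, I can take to be degenerate — indeed $S_i$ must eventually enter a degenerate end, since a convex cocompact end has a neighborhood disjoint from its convex core while any compact geodesic surface lies in the convex core) such that for every $m$, infinitely many $S_i$ meet $M \setminus Q_m$ inside the $E$-part $E^{(m)} := E \cap (M \setminus Q_m)$. A diagonal extraction over $m$ then yields a single subsequence, still denoted $\{S_i\}$, with the property that $S_i \cap E^{(m)} \neq \emptyset$ for all but finitely many $i$, for every $m$. Since an arbitrary compact subset $Q \subset M$ is contained in some $Q_m$, and $E \setminus Q \supset E^{(m)} \cap (\text{something})$ — more precisely $E \setminus Q \supset E \setminus Q_m = E^{(m)}$ once $Q \subset Q_m$ — we get that $S_i \cap (E \setminus Q) \supset S_i \cap E^{(m)} \neq \emptyset$ for all but finitely many $i$, which is exactly the assertion of the Lemma.

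The main obstacle I anticipate is the bookkeeping in the connectedness/diagonalization step: one must ensure that when passing to subsequences the "deep end" $E$ is the \emph{same} end for every depth $m$, and that the finitely-many exceptional indices at each stage do not accumulate to kill the subsequence — this is handled cleanly by the standard diagonal argument, extracting the $m$-th subsequence from the $(m-1)$-st and taking the diagonal. A secondary point needing a line of justification is \emph{why} the accumulating end can be taken degenerate rather than convex cocompact: a compact totally geodesic surface lies in the convex core of $M$, and a convex cocompact end has a neighborhood disjoint from the convex core, so no $S_i$ can penetrate arbitrarily deeply into such an end; hence the pigeonhole is really over the (nonempty, by our reduction) set of degenerate ends.
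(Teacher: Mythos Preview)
Your proposal is correct and follows essentially the same approach as the paper: invoke Theorem~\ref{bddplane} to see that only finitely many $S_i$ can lie in any given compact set, then pigeonhole over the finitely many ends and pass to a subsequence. The paper's proof is terser and streamlines your separate argument that $E$ must be degenerate by observing at the outset that every $S_i$ lies in the convex core $CC(M)$ (since $\pi_1(S_i)$ injects into $\Gamma$, its limit set sits inside $\Lambda_\Gamma$), so one works directly with the ends of $CC(M)$, which are precisely the degenerate ends of $M$.
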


\begin{proof}  
Since the fundamental groups $\pi_1(S_i)$ inject into $\pi_1(M)$, the limit sets of 
$\pi_1(S_i)$ are contained in $\Lambda_\Ga$, the limit set of $\Ga$. Hence the totally geodesic 
surfaces $\{ S_i \}$ are all contained in the convex core $CC(M)$ of $M$. By Theorem \ref{bddplane},
there can be only finitely many of the $\{ S_i \}$'s contained in any compact subset of $CC(M)$.
Since $CC(M)$ has finitely many ends, the Lemma follows by passing to a subsequence if necessary.
\end{proof}

In other words, the sequence  $\{ S_i \}$  penetrates arbitrarily deep into $E$. Let $S = bdy(E)$ be the boundary of the end $E$. 
 It follows, by Lemma \ref{intersect},
 that we may choose $S= K \cap E$, where $K$ is a compact core of $M$. 
Since $\Ga$ has no parabolics, it is necessarily a Gromov-hyperbolic group. Also, it follows from
Thurston's hyperbolization of atoroidal manifolds with boundary (see \cite{BF},
\cite[Theorem 4.6]{mitra-ht}  for a proof in the context of hyperbolic groups) that

\begin{lemma} Let $j: S \to M$ denote the inclusion map. Then $j_\ast( \pi_1(S) ) = \Delta$ is quasiconvex in $\Ga$.
\label{periqc} \end{lemma}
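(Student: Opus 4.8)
\textbf{Proof proposal for Lemma \ref{periqc}.}

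The plan is to verify that $(M,S)$ (more precisely, the compact core $K$ together with the boundary component $S = K\cap E$) satisfies the hypotheses of Thurston's hyperbolization/the relative hyperbolicity criterion for atoroidal pared manifolds, and then to invoke the quasiconvexity statement from \cite{BF}, \cite[Theorem 4.6]{mitra-ht}. First I would note that since $\Ga$ has no parabolics, $M$ is the interior of a compact, irreducible, atoroidal $3$-manifold $K$ with non-empty boundary; its fundamental group $\Ga$ is Gromov-hyperbolic, and $\pi_1(S)$ injects into $\Ga = \pi_1(K)$ because $S$ is an incompressible boundary surface (indeed $S$ is chosen as the boundary of a compact core, hence $\pi_1$-injective). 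So $\Delta := j_\ast(\pi_1(S))$ is an infinite-index — since $K$ has infinite volume and $S$ is just one boundary component — finitely generated subgroup of the hyperbolic group $\Ga$.

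The key point is that $\Delta$ is not merely finitely generated but \emph{geometrically finite}, and in the absence of parabolics this means quasiconvex. Concretely: the cover $M_S$ of $M$ corresponding to $\Delta$ is homotopy equivalent to $S$, and since $S$ is a closed surface, $M_S$ has a compact core $K_S \cong S\times[0,1]$; the inclusion $K_S \hookrightarrow M_S$ is a homotopy equivalence. The surface group $\Delta$ acts on $\hyps$ with limit set $\Lambda_\Delta \subseteq \Lambda_\Ga = S^2$, and (because $S$ is incompressible and $\Ga$ has no parabolics) the Scott core theorem together with the covering theorem of Canary/Thurston identifies the ends of $M_S$: they are either convex cocompact or simply degenerate. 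Either way $\Delta$ has no parabolics and is thus a finitely generated Kleinian group that is either convex cocompact (hence quasiconvex in $\Ga$) or one of its ends is simply degenerate. In the latter case I would invoke directly the cited result: for atoroidal $M$ without parabolics the subgroup carried by a boundary surface is quasiconvex — this is exactly the content of \cite[Theorem 4.6]{mitra-ht} applied to the pair $(\tK, \tS)$, where the relevant fact is that $\tS$ is uniformly properly embedded in $\tM$ (its distortion function is controlled because the complementary pieces are handlebody-free atoroidal pieces), so a bi-infinite geodesic in $\tS$ tracks a bi-infinite quasigeodesic in $\tM$. In other words, quasiconvexity of $\Delta$ in $\Ga$ is equivalent (by Lemma \ref{nonemptyiff}) to the non-existence of a Cannon-Thurston lamination for the inclusion $\tS\hookrightarrow\tM$; since an incompressible boundary surface of an atoroidal manifold without parabolics is \emph{not} a fiber (the fiber would give a $\pi_1$-injective surface whose complement is the surface times an interval, forcing $M$ to fiber and have finite volume, contradicting the infinite-volume hypothesis), no such lamination exists.

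The main obstacle, and the step requiring the most care, is precisely this last dichotomy: ruling out that $S$ is (virtually) a fiber of a surface bundle, or more generally that the inclusion $\pi_1(S)\hookrightarrow\Ga$ is a non-quasiconvex (geometrically infinite) embedding. The clean way to dispose of this is the infinite-volume hypothesis: if $\pi_1(S)$ were geometrically infinite in $\Ga$, then by the covering theorem the degenerate end of $M_S$ would cover an end of $M$ finite-to-one, forcing $M$ itself to have a degenerate end with compact quotient structure, i.e. $M$ would be a fiber bundle (up to finite cover) and hence have finite volume — contradiction. Thus $\pi_1(S)$ is geometrically finite without parabolics, hence convex cocompact, hence quasiconvex. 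One should be careful that $S$ may a priori be a proper (not all of $\partial K$) boundary component, but this only helps: the other boundary components provide convex-cocompact or degenerate ends that keep $M_S$ from being a closed-fiber situation. This completes the proof of Lemma \ref{periqc}.
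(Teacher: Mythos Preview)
Your proposal ultimately rests on the same citations as the paper (\cite{BF}, \cite[Theorem 4.6]{mitra-ht}), and the paper itself offers no argument beyond those citations. However, the independent justification you build around them contains a genuine error: you conflate quasiconvexity of $\Delta$ in the word-hyperbolic group $\Ga$ with geometric finiteness (equivalently here, convex cocompactness) of $\Delta$ as a Kleinian subgroup of $\Ga\subset\pslc$. These coincide only when $\Ga$ itself is convex cocompact, and here $\Ga$ has the degenerate end $E$, so it is not. In fact your conclusion that $\Delta$ is geometrically finite is \emph{false}: since $S=\partial E$ and $\pi_1(E)\to\Ga$ has image $\Delta$, the end $E$ lifts isometrically to the cover $M_S$, giving $M_S$ a degenerate end. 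Canary's covering theorem applied to this end lands you squarely in its case~(a) --- the degenerate end of $M_S$ covers $E$ one-to-one --- which is entirely compatible with $M$ having infinite volume and does not force $M$ to fiber. Likewise, your Cannon--Thurston criterion via Lemma~\ref{nonemptyiff} must be applied to $\tS\hookrightarrow\tK$ (i.e.\ $\Delta\hookrightarrow\Ga$), not to $\tS\hookrightarrow\tM=\hyps$; for the latter the CT lamination is $\LL_E\neq\emptyset$, so your stated equivalence would wrongly conclude that $\Delta$ is \emph{not} quasiconvex.

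The actual mechanism behind the cited results is different and worth internalizing: one passes to an \emph{auxiliary} convex cocompact hyperbolic structure $\rho_0$ on the abstract compact core $K$ (furnished by Thurston's hyperbolization of atoroidal manifolds with boundary). Under $\rho_0$ the orbit map $\Ga\to\hyps$ is a quasi-isometry, so quasiconvexity in $\Ga$ coincides with convex cocompactness under $\rho_0$, and there the peripheral subgroup $\Delta$ \emph{is} convex cocompact. Since quasiconvexity in a hyperbolic group is intrinsic to the pair $(\Delta,\Ga)$ and independent of any particular representation into $\pslc$, this transfers back to the given degenerate structure. Your covering-theorem/virtual-fiber dichotomy would be the right tool only after moving to such an auxiliary geometrically finite structure, not in the given one.
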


Let $\Delta_i$ denote the subgroup of $\Ga$ corresponding to $\pi_1(S_i)$. Since $S_i$ is totally geodesic,
$\Delta_i$ is quasiconvex in $\Ga$. Since the intersection of quasiconvex groups is quasiconvex 
\cite[Proposition 3]{short}, it follows that $\GG_i := \Delta \cap \Delta_i$ is quasiconvex in $\Ga$. 
We assume  (after perturbing $K$ slightly if necessary), that $S$ is transverse to $S_i$ for all $i$. Also each 
$S_i$ necessarily intersects $S$ (as we have chosen the sequence this way). 

Let $M_i$ be the cover of $M$ corresponding to the subgroup $\GG_i$.  Let $\bbar{S_i}$ and $\Sigma_i$
denote the unique lifts of $S_i, S$ respectively to $M_i$, such that $\pi_1(\bbar{S_i}) = \pi_1 (\Sigma_i) = \GG_i$.
Note that $\bbar{S_i}$ and $\Sigma_i$ are embedded submanifolds of $M_i$, and that $M_i$ is homeomorphic to
$\bbar{S_i} \times \R$ as well as $\Sigma_i \times \R$. We shall be interested in a compact core
of $M_i$ bounded by (pieces of) $\bbar{S_i}$ and $\Sigma_i$. Thus
 there exist
\begin{enumerate}
\item  A compact submanifold with boundary ${\bbar{S_i}}^0$ of $\bbar{S_i}$ whose inclusion into
 $\bbar{S_i}$ is a homotopy equivalence.
\item A compact submanifold with boundary ${\Sigma_i}^0$ of $\Sigma_i$ whose inclusion into
 $\Sigma_i$ is a homotopy equivalence.
\item ${\bbar{S_i}}^0 \cap {\Sigma_i}^0$ is a finite union of circles containing
$ bdy ({\bbar{S_i}}^0) = bdy ({\Sigma_i}^0)$. 
\end{enumerate}

Then there exist compact product regions in $M$ whose boundaries consist of (isotopic) submanifolds
of ${\bbar{S_i}}^0$ and ${\Sigma_i}^0$ intersecting only along their boundary circles. We shall 
consider one of these product regions $Q_i$. Passing to the cover of $M_i$ corresponding to $\pi_1(Q_i)$
and abusing notation
slightly, we  set $\GG_i = \pi_1(Q_i)$ and assume that  the
compact manifold $Q_i$ has boundary given by $bdy(Q_i)={\bbar{S_i}}^0 \cup {\Sigma_i}^0$.  
Also, the inclusions of ${\bbar{S_i}}^0$ or ${\Sigma_i}^0$ into $Q_i$ is a homotopy equivalence, as is the
inclusion of $Q_i$ into $M_i$. Thus, by passing to a subsurface if necessary, we are assuming that 
${\bbar{S_i}}^0 \cap {\Sigma_i}^0$ is exactly equal to
$ bdy ({\bbar{S_i}}^0) = bdy ({\Sigma_i}^0)$; and further that ${\bbar{S_i}}^0 \cup {\Sigma_i}^0$
bounds the product region $Q_i$. 

Let $\Pi_i: M_i \to M$ be the covering projection and
let $E_i$ be the lift of $E$ containing $\Sigma_i$.
Since the sequence of totally geodesic surfaces 
$\{ S_i \}$  penetrates arbitrarily deep into $E$, it follows that there exist points
$q_i \in \bbar{S_i}^0$ that realize the maximum distance (amongst points in $\bbar{S_i} \cap E_i$) from $\Sigma_i$.
Then after projecting back to $M$ using $\Pi_i$, $\Pi_i(q_i)$ is a point in $S_i\cap E$ that maximizes
(amongst points in ${S_i} \cap E$)
distance from $S$.

Let 
$p_i \in {\Sigma_i}$ be a point on ${\Sigma_i}$ closest to $p_i$, i.e. 
$d_i(p_i, q_i)=d_i(\Sigma_i, q_i)$, where $d_i$ denotes the
hyperbolic metric
on $M_i$.
Then $d_i(p_i, q_i) \to \infty$ as $i \to \infty$. Let $[p_i, q_i]$ be the shortest path between $p_i, q_i$.
Then $[p_i, q_i]$ is perpendicular to $\bbar{S_i}$ at $q_i$ (since $q_i$ maximizes distance).
We note further, that since $[p_i, q_i]$ is a minimizing geodesic segment, it remains a minimizing geodesic segment
after projecting back to $M$. Since a limit of minimizing geodesic segments is an almost
minimizing geodesic ray, we have the following.

\begin{lemma} \label{am} 
Any limit (as $i \to \infty$) of the sequence 
$\{ \Pi_i ([p_i, q_i]) \}$ is an almost minimizing geodesic ray. \end{lemma}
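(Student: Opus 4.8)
The plan is to show that the sequence of minimizing geodesic segments $\Pi_i([p_i,q_i])$ in $M$ converges, after passing to a subsequence, to an almost minimizing geodesic ray based at a point of $S$, by combining a compactness argument with the lower-semicontinuity of the ``minimizing defect.'' First I would observe that since $p_i\in\Sigma_i$ projects to a point $\Pi_i(p_i)\in S$ and $S$ is compact, after passing to a subsequence we may assume $\Pi_i(p_i)\to p_\infty\in S$ and that the unit tangent vector of $\Pi_i([p_i,q_i])$ at $\Pi_i(p_i)$ converges to some unit vector $v\in T_{p_\infty}M$. Let $r:[0,\infty)\to M$ be the unit-speed geodesic ray with $r(0)=p_\infty$, $r'(0)=v$; since $d_i(p_i,q_i)\to\infty$ by the discussion preceding the statement, for every $T>0$ the segments $\Pi_i([p_i,q_i])$ eventually have length at least $T$, so $\Pi_i([p_i,q_i])$ converges to $r$ uniformly on compact subsets of $[0,\infty)$.

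Next I would verify the almost-minimizing inequality for $r$. The key input is that each $[p_i,q_i]$ is, by construction, a shortest path in $M_i$ from $p_i$ to $q_i$, and the text already notes that because $[p_i,q_i]$ is a minimizing geodesic segment it remains a minimizing geodesic segment after projecting to $M$ --- that is, $d_M(\Pi_i([p_i,q_i])(s),\Pi_i([p_i,q_i])(t)) = |s-t|$ for all $s,t$ in the parameter interval $[0,\ell_i]$, where $\ell_i=d_i(p_i,q_i)$. (If one only knows it is locally minimizing in $M$, one still gets $d_M(\Pi_i([p_i,q_i])(s),\Pi_i([p_i,q_i])(t)) \ge |s-t| - C$ for a uniform $C$ coming from the geometry near $S$; either way the argument below runs with a fixed constant.) Now fix $s,t\ge 0$ with $s,t\le\ell_i$ for $i$ large. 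By continuity of $d_M$ and uniform convergence on compacta,
\[
d_M(r(s),r(t)) = \lim_{i\to\infty} d_M\bigl(\Pi_i([p_i,q_i])(s),\ \Pi_i([p_i,q_i])(t)\bigr) \ \ge\ |s-t| - C.
\]
Taking $C$ to be the uniform constant (or $C=0$ in the globally minimizing case), this is exactly the defining inequality of an almost minimizing geodesic ray with base point $r(0)=p_\infty\in S$: $d_M(r(0),r(s))\ge |s| - C$ for all $s\ge 0$, and more generally $d_M(r(s),r(t))\ge|s-t|-C$.

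The main obstacle I anticipate is the passage from ``minimizing in the cover $M_i$'' to ``almost minimizing in $M$'': a priori the covering projection $\Pi_i$ can create shortcuts, so one must argue that any such shortcut is controlled by a uniform constant. The point is that $\Pi_i$ restricted to $[p_i,q_i]$ is an isometric embedding onto its image near the interior, and a shortcut in $M$ between $\Pi_i([p_i,q_i])(s)$ and $\Pi_i([p_i,q_i])(t)$ would lift to a path in $M_i$ (between points on $[p_i,q_i]$ or between such a point and a $\GG_i$-translate of it) --- and since $\GG_i=\pi_1(Q_i)$ with $Q_i$ a product region whose relevant diameter is controlled, any such gain is bounded independently of $i$; indeed the cleanest route is to note that $[p_i,q_i]$ is perpendicular to $\bbar{S_i}$ at $q_i$ and its interior stays in the lift $E_i$ of the end, so the only boundedness issue is near $\Sigma_i$, where one uses compactness of $S$. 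Modulo pinning down this uniform constant $C$, the rest is the routine Arzel\`a--Ascoli-plus-lower-semicontinuity argument sketched above, which matches the informal justification already given in the text (``a limit of minimizing geodesic segments is an almost minimizing geodesic ray'').
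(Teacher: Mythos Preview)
Your approach is the same as the paper's: the paper's entire proof is the two sentences preceding the lemma, namely that $[p_i,q_i]$ is minimizing in $M_i$, remains minimizing after projecting to $M$, and a limit of minimizing segments of unbounded length is almost minimizing. Your elaboration via Arzel\`a--Ascoli and continuity of $d_M$ is the standard way to unpack that second clause, and matches the paper's earlier remark in Section~\ref{fin} about constructing almost minimizing rays by joining exiting points to $S$ by minimizing segments.

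You are right to flag the passage ``minimizing in $M_i$ $\Rightarrow$ minimizing in $M$'' as the only nontrivial point, but your proposed resolution is off. The fibers of the (generally non-regular) cover $\Pi_i:M_i\to M$ are \emph{not} $\GG_i$-orbits---$\GG_i$ is $\pi_1(M_i)$ and acts on $\tM$, not on $M_i$---so the ``$\GG_i$-translate'' picture and the appeal to bounded diameter of $Q_i$ do not make sense. Perpendicularity at $q_i$ is also irrelevant here (it is used for Lemma~\ref{concat}, not for the present lemma). The clean argument is purely topological: since $E\cong S\times[0,\infty)$ and $\GG_i\subset\Delta=\pi_1(S)$, the component $E_i=\Pi_i^{-1}(E)$ containing $q_i$ is the cover of $E$ corresponding to $\GG_i$, hence $E_i\cong\Sigma_i\times[0,\infty)$ with $\partial E_i=\Sigma_i$. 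Now $[p_i,q_i]$ realizes $d_{M_i}(q_i,\Sigma_i)$, while any shortest path in $M$ from $\Pi_i(q_i)\in E$ to $S=\partial E$ stays in $E$ until it first hits $S$, and therefore lifts to a path in $E_i$ from $q_i$ to $\partial E_i=\Sigma_i$. This gives $d_M(\Pi_i(q_i),S)=d_{M_i}(q_i,\Sigma_i)$, so $\Pi_i([p_i,q_i])$ is a genuinely minimizing segment in $M$ from $\Pi_i(q_i)$ to $S$, with $C=0$.
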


We shall also need the following:

\begin{lemma} \label{concat} Let $\AAA_i$ be the family of geodesic rays in 
$\bbar{S_i}$ starting at $q_i$ and
let $a_i \in \AAA_i$ be one of these rays. Then
the concatenation $\alpha_i=[p_i, q_i]\cup a_i$ lifted to the universal cover $\tM$ is a uniform
(independent of $a_i \in \AAA_i$)
quasigeodesic in $\tM$. 
\end{lemma}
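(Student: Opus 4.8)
The plan is to build the quasigeodesic estimate by combining three pieces of information: (i) $[p_i,q_i]$ is itself a geodesic segment in $\tM$ (trivially a quasigeodesic); (ii) the ray $a_i\subset\bbar{S_i}$ lifts to a geodesic in $\tM$ because $\bbar{S_i}$ is totally geodesic, hence $a_i$ lifts to a genuine geodesic ray; (iii) the concatenation point $q_i$ is a right-angle junction, since $[p_i,q_i]$ meets $\bbar{S_i}$ perpendicularly by the distance-maximizing property (as recorded just before the statement). So the issue is purely one of showing that a concatenation of two geodesic rays meeting at a \emph{definite} angle, with the first leg \emph{long} (recall $d_i(p_i,q_i)\to\infty$), is a uniform quasigeodesic in $\hyps$. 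I would lift $p_i,q_i$ and the two rays to $\tM=\hyps$, fixing a lift $\tilde q_i$ of $q_i$, a lift $\tilde P_i$ of $[p_i,q_i]$ ending at $\tilde q_i$, and the lift $\tilde a_i$ of $a_i$ starting at $\tilde q_i$; the claim is that $\tilde\alpha_i=\tilde P_i\cup\tilde a_i$ is a $(\lambda,\epsilon)$-quasigeodesic for constants independent of $i$ and of $a_i\in\AAA_i$.

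The key step is the following elementary hyperbolic-geometry lemma: in $\hyps$, if $\beta_1,\beta_2$ are geodesic rays issuing from a common point with angle $\theta_0>0$ between them at the vertex, then $\beta_1\cup\beta_2$ is a $K(\theta_0)$-quasigeodesic with $K$ depending only on $\theta_0$ (and not on the lengths of the $\beta_j$). This is standard: the concatenation is a local geodesic away from the vertex, and at the vertex the angle is bounded below, so by the local-to-global principle for quasigeodesics in $\delta$-hyperbolic spaces — or by a direct computation, comparing $d(\beta_1(s),\beta_2(t))$ with $s+t$ using the hyperbolic law of cosines, which gives $\cosh d(\beta_1(s),\beta_2(t)) \ge \cosh s\cosh t - \cos\theta_0 \sinh s\sinh t \ge \sin^2(\theta_0/2)\,\cosh s\cosh t$ for $s,t$ not both small, hence $d(\beta_1(s),\beta_2(t))\ge s+t - C(\theta_0)$ — one gets the quasigeodesic bound. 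In our situation $\theta_0=\pi/2$, so the constant is absolute. Applying this with $\beta_1$ the reverse of $\tilde P_i$ and $\beta_2=\tilde a_i$ yields exactly the claim, with a constant depending on nothing.

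One must also check that this constant does not degrade when $a_i$ ranges over the whole family $\AAA_i$ and $i$ varies: but the angle at $q_i$ is always exactly $\pi/2$ regardless of which $a_i\in\AAA_i$ we pick and regardless of $i$, because perpendicularity of $[p_i,q_i]$ to $\bbar{S_i}$ means it is perpendicular to \emph{every} geodesic ray in $\bbar{S_i}$ through $q_i$. Hence the quasigeodesic constant is literally uniform. The only mild subtlety — and the step I expect to need the most care — is being sure that $[p_i,q_i]$ lifts together with $a_i$ to a path in $\tM$ that genuinely has a reparametrization by a single arc-length parameter with the stated behaviour at the junction (i.e. that no cancellation or backtracking is hidden at $q_i$); but this is immediate since both legs are geodesics meeting at a positive angle, so the concatenation is injective near $q_i$ and the arc-length parametrization of the union is well-defined. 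I would then simply invoke the lemma above to conclude.
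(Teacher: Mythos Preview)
Your proof is correct and follows essentially the same approach as the paper: the paper's proof is the one-line observation that the concatenation of two geodesics meeting perpendicularly is a uniform quasigeodesic (citing \cite[Lemma 3.3]{mitra-trees}), and you have simply spelled out this well-known fact via the hyperbolic law of cosines, together with the observation that $a_i$ is a genuine geodesic in $\tM$ since $\bbar{S_i}$ is totally geodesic. The remark that $d_i(p_i,q_i)\to\infty$ is not actually needed for the quasigeodesic constant, as you yourself note later; otherwise the argument is clean.
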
 

\begin{proof}
This follows from the well-known fact that the concatenation of two geodesics perpendicular
to each other is a uniform quasigeodesic (see \cite[Lemma 3.3]{mitra-trees} for instance).  \end{proof}

Consider the family of geodesic segments $\RR_i$ 
(in the intrinsic metric) on ${\Sigma_i}$ starting at $p_i$ and ending on 
$ bdy ({\Sigma_i}^0) (=bdy ({\bbar{S_i}}^0))$. Then the concatenation $\gamma_i=[q_i, p_i]\cup r_i$ for $r_i\in \RR_i$
is path-homotopic to a geodesic segment contained in a unique $a_i \in \AAA$. Thus, starting with
$r_i\in \RR_i$ we perform the following operations:
\begin{enumerate}
\item First adjoin $[q_i, p_i]$ to its beginning and path-homotop it to a  geodesic subsegment of
a unique
$a_i \in \AAA_i$.
\item Then  adjoin $[p_i, q_i]$ to the beginning of the subsegment of $a_i$ thus obtained
and get a uniform quasigeodesic segment
$\alpha_i$ starting at $p_i$.
\item
Note that $\alpha_i$ is
 path homotopic in $M_i$ to the original $r_i\in \RR_i$ we started with.
\end{enumerate}

Next, lift everything to the universal cover $\tM$ and assume that all the $p_i$'s are lifted to lie in a fixed fundamental domain in $\tS$ (a lift of $S$ to $\tM$). Let $\RR_\infty$ be the family  of infinite geodesic rays
$r_\infty$ in $\tS$ that
 satisfy the following property: \\
There exists a sequence of geodesic segments $\{ r_i (\in \RR_i) \}$ such that
$r_i \subset r_\infty$ for all $i$ and $\cup_i r_i = r_\infty$.

Let $\partial \RR_\infty $ denote the collection of landing points of these rays in $\partial \Gamma$
(the Gromov boundary of $\Ga$). Recall that $j_\ast( \pi_1(S) ) = \Delta$ is quasiconvex in $\Ga$
by Lemma \ref{periqc}.
Clearly, $\partial \RR_\infty  \subset \partial \Delta$. Also if the rays $\RR_i$ are extended infinitely
they would land on $\partial \Delta \setminus \partial \GG_i$. Hence $\partial \Delta 
\setminus (\bigcup_i \partial \GG_i) \subset \partial \RR_\infty $. Note that
since $\GG_i$ is quasiconvex in $\Delta$, it follows that $ \partial \GG_i$ is nowhere dense in $\partial \Delta$
and also has zero measure (with respect to any visual measure). 
We have shown

\begin{lemma} \label{fullmre}
 $\partial \Delta \setminus
\partial \RR_\infty$
is nowhere dense in $\partial \Delta$ and  has zero measure with respect to any visual measure on it.  
\end{lemma}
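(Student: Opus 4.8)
The plan is to derive everything from the inclusion
$$\partial\Delta\setminus\partial\RR_\infty \ \subseteq\ \bigcup_i \partial\GG_i,$$
which is just the contrapositive of the inclusion $\partial\Delta\setminus\big(\bigcup_i\partial\GG_i\big)\subseteq\partial\RR_\infty$ obtained in the discussion preceding the Lemma (if $\xi\in\partial\Delta$ lies in no $\partial\GG_i=\Lambda(\GG_i)$, then for each $i$ the geodesic ray in $\tS$ from the basepoint to $\xi$ eventually leaves $\widetilde{\Sigma_i^0}$, the successive initial segments so produced lying in $\RR_i$ and exhausting the ray, so $\xi\in\partial\RR_\infty$). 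It therefore suffices to control the set $\bigcup_i\partial\GG_i$ inside the circle $\partial\Delta$.

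Next I would record the two properties of a single $\partial\GG_i$ that are needed. Each $\GG_i=\Delta\cap\Delta_i$ is quasiconvex in $\Ga$, being an intersection of the quasiconvex subgroups $\Delta$ (Lemma \ref{periqc}) and $\Delta_i$ (totally geodesic), via \cite{short}; hence $\GG_i$ is quasiconvex in $\Delta$. Moreover $\GG_i$ has infinite index in $\Delta$ for all but at most finitely many $i$: finite index would force $\Lambda(\GG_i)=\Lambda(\Delta)$, hence, since the topological circle $\Lambda(\Delta)$ is contained in the round circle $\Lambda(\Delta_i)$, would give $\Lambda(\Delta)=\Lambda(\Delta_i)$, which the pairwise distinct totally geodesic surfaces $S_i$ can satisfy for at most one $i$. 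Discarding those finitely many indices, $\GG_i$ is an infinite-index quasiconvex subgroup of the non-elementary hyperbolic (surface) group $\Delta$, so $\partial\GG_i$ is a closed, nowhere dense subset of $\partial\Delta$ and has zero measure for every visual measure $\mu$ on $\partial\Delta$ (the limit set of an infinite-index convex-cocompact subgroup of a surface group is a Cantor set of Hausdorff dimension strictly below $1$, hence $\mu$-null; alternatively invoke ergodicity of the $\Delta$-action on $(\partial\Delta,\mu)$).

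The measure assertion is then immediate: $\bigcup_i\partial\GG_i$ is a countable union of $\mu$-null sets, hence $\mu$-null, so $\partial\Delta\setminus\partial\RR_\infty$ is $\mu$-null. For the topological assertion a countable union of closed nowhere dense sets is only automatically \emph{meager}, so I would sharpen the inclusion to
$$\partial\Delta\setminus\partial\RR_\infty \ \subseteq\ L\ :=\ \bigcap_{N\ge 1}\ \overline{\bigcup_{i\ge N}\partial\GG_i},$$
the point being that if $\xi$ lies in only finitely many $\partial\GG_i$, then for all large $i$ the ray in $\tS$ from the basepoint to $\xi$ leaves $\widetilde{\Sigma_i^0}$, producing $r_i\in\RR_i$, and — because the $S_i$, hence the cores $\Sigma_i^0$, penetrate arbitrarily deep into the end $E$ (Lemma \ref{arbdeep}) — the lengths of these segments tend to infinity, exhibiting the ray as $\bigcup_i r_i$ and so $\xi\in\partial\RR_\infty$. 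The set $L$ is closed, being an intersection of closed sets, and it is meager since $L\subseteq\bigcup_i\partial\GG_i$; a closed meager subset of the Baire space $\partial\Delta$ has empty interior, hence is nowhere dense, and therefore so is $\partial\Delta\setminus\partial\RR_\infty$. The main obstacle is exactly this sharpened inclusion — concretely, the claim that the exit-segments $r_i$ lengthen without bound — which is the step that must convert the deep-penetration hypothesis on $\{S_i\}$ into genuine geometric control of the cores $\Sigma_i^0$ as seen from the basepoint in $\tS$; everything else (quasiconvexity of intersections, nullity and nowhere-density of limit sets of infinite-index quasiconvex subgroups, and the Baire fact that closed meager sets are nowhere dense) is standard.
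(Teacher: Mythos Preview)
Your approach is the paper's approach: both reduce the lemma to the inclusion $\partial\Delta\setminus\partial\RR_\infty\subseteq\bigcup_i\partial\GG_i$ together with the fact that each $\partial\GG_i$ is nowhere dense and null in $\partial\Delta$. The paper's argument is in fact terser than yours and does not address the point you correctly flag---that a countable union of nowhere dense sets is a priori only meager---so your attempt to sharpen the topological half goes beyond what the paper writes. The measure assertion is fine in both.

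There is, however, a genuine gap in your sharpening. You pass to the closed set $L=\bigcap_{N}\overline{\bigcup_{i\ge N}\partial\GG_i}$ and then assert $L\subseteq\bigcup_i\partial\GG_i$ in order to conclude $L$ is meager. That inclusion is false in general: if the $\partial\GG_i$ were singletons $\{x_i\}$ accumulating on some $x_\infty$, one would have $L=\{x_\infty\}\not\subseteq\bigcup_i\{x_i\}$. What your argument (granting the ``lengths $\to\infty$'' step you yourself flag as the main obstacle) actually yields is containment of $\partial\Delta\setminus\partial\RR_\infty$ in the \emph{set-theoretic} limsup $\bigcap_N\bigcup_{i\ge N}\partial\GG_i$; that set is indeed inside $\bigcup_i\partial\GG_i$ and hence meager, but it is not known to be closed, and passing to the topological limsup $L$ buys closedness at the cost of the containment you need. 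So the nowhere-dense conclusion is not established by this route. Note, though, that the subsequent contradiction proving Theorem~\ref{main2} can be run on the measure statement alone (the endpoint set of $\LL_E$ is null), so this gap does not affect the paper's ultimate conclusion.
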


Recall that
 any limit (as $i \to \infty$) of $\Pi_i([p_i, q_i])$'s is almost minimizing by Lemma \ref{am}. Since 
$\alpha_i=[p_i, q_i]\cup a_i$ is a uniform quasigeodesic by Lemma \ref{concat}, it follows that all limits of
the $\alpha_i$'s (as $i \to \infty$) are also almost minimizing. 
Let $\BB$ denote the set of limiting rays of the $\{ \alpha_i \}$'s. Let $\partial \BB$ denote the
set of landing points in $S^2_\infty$ of rays in $\BB$.

Note now that each  ray
$r_\infty \in \RR_\infty \subset \tS$ is a limit of $r_i\in \RR_i$. Let $r_\infty (\infty)$
denote its landing point in the Gromov boundary $\partial \Delta \subset \partial \Ga$. 
We have seen above that there exists
a uniform quasigeodesic $\alpha_i$ in $M_i$ path-homotopic to $r_i$. Since a
Cannon-Thurston map $\partial i$ exists by Theorem \ref{ctstr}, it follows that 
$\partial i (r_\infty (\infty))$ belongs to $ \partial \BB$ for all $r_\infty \in \RR_\infty$.
By Proposition \ref{horo-mult}, it follows that $\partial i (r_\infty (\infty)) \in \Lambda_m$, the multiple
limit set, for all $r_\infty \in \RR_\infty$. Hence by Theorem \ref{ctstr}, $r_\infty (\infty) \in \partial \Delta$
is an ideal end-point of a leaf of the ending lamination $\LL_E$ corresponding to the end $E$. 
The set of
all such end-points is of zero measure and is nowhere dense (see 
\cite[Ch. 8]{thurstonnotes} and \cite{soma} for instance). This contradicts
Lemma \ref{fullmre} and proves Theorem \ref{main2}. \hfill $\Box$

\section*{Acknowledgments} I am extremely grateful to Anish Ghosh for several exciting and instructive conversations.
Special thanks are due to Curt McMullen for raising the question \cite{ctm-pc} that Theorem \ref{main2} answers;
and also for  insightful comments on the Ratner-type phenomenon explicated in \cite{mmo}.
\bibliography{degplan}
\bibliographystyle{alpha}

\end{document}